\newtheorem{theorem}{Theorem}[section]
\newtheorem{proposition}[theorem]{Proposition}
\newtheorem{lemma}[theorem]{Lemma}
\newtheorem{corollary}[theorem]{Corollary}
\theoremstyle{definition}
\newtheorem{definition}[theorem]{Definition}
\newtheorem{example}[theorem]{Example}
\newtheoremstyle{named}{}{}{\itshape}{}{\bfseries}{.}{.5em}{#1 \thmnote{#3}}
\theoremstyle{named}
\newtheorem*{namedtheorem}{Theorem}
\def\Z{\mathbb{Z}}
\def\hd{{\widehat{D}}}
\def\hc{{\widehat{C}}}
\title{Zero-one Schubert polynomials}
\author{Alex Fink}
\address{Alex Fink, School of Mathematical Sciences, Queen Mary University of London, UK, E1 4NS.
	\textup{a.fink@qmul.ac.uk}}
\author{Karola M\'esz\'aros}
\address{Karola M\'esz\'aros, Department of Mathematics, Cornell University, Ithaca, NY 14853 and School of Mathematics, Institute for Advanced Study, Princeton, NJ 08540.  \newline\textup{karola@math.cornell.edu}
}
\author{Avery St.~Dizier}
\address{Avery St. Dizier, Department of Mathematics, Cornell University, Ithaca NY 14853.  \newline\textup{ajs624@cornell.edu}
}
\thanks{Fink was partially supported by an Engineering and Physical Sciences Research Council grant (EP/M01245X/1),
	and is now supported by the European Union's Horizon 2020 research and innovation programme under grant agreement No~792432.
	M\'esz\'aros is partially supported by a National Science Foundation Grant (DMS 1501059), as well as by a von Neumann Fellowship at the IAS funded by the Fund for Mathematics and Friends of the Institute for Advanced Study.}
\date{\today}
\begin{document}

\begin{abstract}
	We prove that if $\sigma \in S_m$ is a pattern of $w \in S_n$, then we can express the Schubert polynomial $\mathfrak{S}_w$ as a monomial times $\mathfrak{S}_\sigma$ (in reindexed variables) plus a polynomial with nonnegative coefficients. This implies that the set of permutations whose Schubert polynomials have all their coefficients equal to either 0 or 1 is closed under pattern containment. Using Magyar's orthodontia, we characterize this class by a list of twelve avoided patterns. We also give other equivalent conditions on $\mathfrak{S}_w$ being zero-one. In this case, the Schubert polynomial $\mathfrak{S}_w$ is equal to the integer point transform of a generalized permutahedron. 
\end{abstract}

\maketitle 

\section{Introduction} 
	Schubert polynomials, introduced  by Lascoux and Sch\"utzenberger in \cite{LS}, represent cohomology classes of Schubert cycles in the flag variety. Knutson and Miller also showed them to be multidegrees of matrix Schubert varieties \cite{multidegree}. There are a number of combinatorial formulas for the Schubert polynomials \cite{laddermoves, BJS, FK1993, nilcoxeter, thomas, lenart, manivel, prismtableaux}, 
	yet only recently has the structure of their supports been investigated: 
	the support of a Schubert polynomial $\mathfrak{S}_w$ is the set of all integer points of a certain generalized permutahedron $\mathcal{P}(w)$ \cite{FMS, MTY}.
	The question motivating this paper is to characterize when  $\mathfrak{S}_w$ equals the integer point transform of $\mathcal{P}(w)$, 
	in other words, when all the coefficients of $\mathfrak{S}_w$ are equal to $0$ or $1$.

	One of our main results is a pattern-avoidance characterization of the permutations corresponding to these polynomials:

	\begin{theorem} \label{thm:01}
		The Schubert polynomial $\mathfrak{S}_w$ is zero-one if and only if $w$ avoids the patterns $12543$, $13254$, $13524$, $13542$, $21543$, $125364$, $125634$, $215364$, $215634$, $315264$, $315624$, and $315642$.
	\end{theorem}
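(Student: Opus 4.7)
The plan is to prove the two directions of the equivalence separately, using the pattern-containment result from the abstract for one direction and Magyar's orthodontia for the other.

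For the forward direction (zero-one implies avoidance of the twelve patterns), I would rely on the statement mentioned in the abstract: if $\sigma$ is a pattern of $w$, then $\mathfrak{S}_w = x^{\alpha} \mathfrak{S}_\sigma(\text{reindexed}) + (\text{nonnegative polynomial})$. This immediately shows that the class of permutations with zero-one $\mathfrak{S}_w$ is closed under pattern containment. Hence it suffices to verify, by a finite direct computation, that each of the twelve listed permutations has a Schubert polynomial containing a coefficient at least $2$. Once each of the twelve cases is checked by any standard formula (e.g.\ BJS or pipe dreams), any $w$ containing one of these patterns inherits a coefficient $\geq 2$ and therefore fails to be zero-one.

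For the backward direction (avoidance implies zero-one), I would use Magyar's orthodontia. Recall that orthodontia constructs $\mathfrak{S}_w$ by starting from the dominant monomial $x^{c(w)}$ and successively applying Demazure/isobaric divided-difference operators $\pi_i$ to straighten the code of $w$ in a prescribed order. Each application $\pi_i f$ either leaves a monomial alone or spreads it into a sum $x^{\beta} + x^{\beta - e_i + e_{i+1}} + \cdots + x^{\gamma}$ of monomials with all coefficients $1$. A coefficient $\geq 2$ in the final polynomial can arise only when two such $\pi_i$-orbits \emph{overlap}, producing a monomial that appears with multiplicity in the sum. The heart of the proof is therefore to catalogue the local configurations in the orthodontia sequence that cause such an overlap, and to show that each such configuration forces the permutation to contain one of the twelve listed patterns. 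Equivalently, if $w$ avoids all twelve patterns, then throughout the orthodontia each successive $\pi_i$ acts on monomials whose supports are disjoint from those produced by later operators, so no coefficient ever exceeds~$1$.

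The main obstacle is this backward direction: one must translate the combinatorial condition ``two orthodontia orbits share a monomial'' into a pattern-containment statement, and then verify that the resulting list of obstructions is exactly the twelve permutations above (and no more). This will likely proceed by induction on the number of orthodontia steps, maintaining as an invariant that the partial polynomial is the indicator function of a down-closed set of lattice points in an appropriate projected generalized permutahedron $\mathcal{P}(w)$, with overlaps tracked through the code of $w$. At each step where a potential overlap occurs, I would extract the indices of the relevant rows and columns of the Rothe diagram and show they witness one of the twelve forbidden patterns; conversely, the twelve minimal obstructions exhaust all such local configurations thanks to the bounded number of rows Magyar's procedure needs to touch before commuting through. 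Together with the easy forward direction, this establishes the theorem.
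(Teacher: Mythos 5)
Your forward direction matches the paper exactly: Theorem~\ref{thm:pattern} (iterated as Corollary~\ref{cor:pattern}) shows the zero-one class is closed under pattern containment, and a finite check that each of the twelve patterns has a coefficient equal to $2$ finishes that implication. The problem is the backward direction, which you correctly identify as ``the main obstacle'' but then leave as a plan rather than a proof. The substance of the theorem lives precisely in the three steps you defer: (1) identifying the exact condition on the orthodontic sequence that prevents coefficients $\geq 2$ --- in the paper this is ``multiplicity-freeness,'' phrased via the orthodontic impact function $\mathcal{I}_w$: whenever $i_r=i_s$ with $r\neq s$, both impacts must equal the same singleton $\{c\}$; (2) proving that this condition forces zero-one-ness --- this is not a statement about disjoint supports of $\pi_i$-orbits (repeated indices \emph{do} occur for multiplicity-free $w$), but rather requires showing that all the root operators attached to a repeated index act within a single column of the intermediate diagram (Lemma~\ref{lem:rootoperatorproperty}), so that column-strictness pins down the filling from its weight and distinct tableaux in $\mathcal{T}_w$ have distinct weights (Theorem~\ref{thm:multfree}, whose interval-selection argument is genuinely delicate); and (3) translating failure of multiplicity-freeness into pattern containment, which the paper does through the intermediate diagrammatic configurations $\mathrm{A}$, $\mathrm{B}$, $\mathrm{B}'$ and a cross-intersection argument bounding the minimal obstructions to permutations of length at most $6$ (Theorems~\ref{thm:mult-pattern} and~\ref{thm:badpatterns}). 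None of these appear in your proposal.

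Two smaller inaccuracies in your sketch are worth flagging. Orthodontia does not start from the dominant monomial and apply a sequence of $\pi_i$'s; the formula $\mathfrak{S}_w=\omega_1^{k_1}\cdots\omega_n^{k_n}\pi_{i_1}(\omega_{i_1}^{m_1}\pi_{i_2}(\cdots))$ interleaves multiplication by monomials $\omega_{i_j}^{m_j}$ with the operators, and this interleaving is exactly why repeated indices $i_r=i_s$ can coexist with zero-one-ness. And your proposed invariant --- that each partial polynomial is the indicator function of a down-closed set of lattice points in a projected generalized permutahedron --- is not what the paper establishes and is not obviously true or sufficient; the paper's argument is entirely at the level of tableaux and weights. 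As written, your proposal establishes one implication and gestures at the other, so it does not yet constitute a proof of the theorem.
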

	
	In Theorem~\ref{thm:011} we provide further equivalent conditions on the Schubert polynomial $\mathfrak{S}_w$ being zero-one. One implication of Theorem~\ref{thm:01} follows from our other main result, 
which relates the Schubert polynomials $\mathfrak{S}_\sigma$ and $\mathfrak{S}_w$ when $\sigma$ is a pattern of $w$:
	
	\begin{theorem} 
		\label{thm:pattern}  
		Fix  $w \in S_n$ and let $\sigma \in S_{n-1}$ be the pattern with Rothe diagram $D(\sigma)$ obtained by removing row $k$  and column $w_k$ from $D(w)$. Then
		\begin{align*}
			\mathfrak{S}_{w}(x_1, \ldots, x_n)=M(x_1, \ldots, x_n) \mathfrak{S}_{\sigma}(x_{1}, \ldots, \widehat{x_k}, \ldots, x_{n})+F(x_1, \ldots, x_n),
		\end{align*}  
		where $F\in \Z_{\geq 0}[x_1, \ldots, x_n]$ and  
		\[M(x_1, \ldots, x_n) = \left(\prod_{(k,i)\in D(w)}{x_k}\right)\left(\prod_{(i,w_k)\in D(w)}{x_i} \right).\]		
	\end{theorem}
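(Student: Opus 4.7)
The plan is to work with the pipe-dream (RC-graph) formula
\[
\mathfrak{S}_w(x_1,\ldots,x_n) = \sum_{D \in RP(w)} \prod_{(i,j) \in D} x_i,
\]
where $RP(w)$ denotes the set of reduced pipe dreams for $w$, and to construct a weight-preserving injection $\phi \colon RP(\sigma) \to RP(w)$ whose image consists exactly of those $D \in RP(w)$ containing every cell of row $k$ and every cell of column $w_k$ of $D(w)$ as a cross. Summing over the image will produce $M \cdot \mathfrak{S}_\sigma$, and the sum over the complement will be $F \in \mathbb{Z}_{\geq 0}[x_1,\ldots,x_n]$.

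Given $D' \in RP(\sigma)$ in the $(n-1) \times (n-1)$ grid, I would define $\phi(D')$ in the $n \times n$ grid by (i) placing a cross at every cell of $D(w)$ lying in row $k$ or column $w_k$, and (ii) placing each cross of $D'$ at its reindexed position: row $i' \mapsto i'$ if $i' < k$ and $i' \mapsto i'+1$ otherwise, with the analogous convention for columns relative to $w_k$. Since $(k,w_k) \notin D(w)$, the row-$k$ and column-$w_k$ portions of $D(w)$ are disjoint and disjoint from the reindexed image of $D'$, so the weight of $\phi(D')$ factors as $M \cdot x^{D'}$ with variables substituted by $x_1,\ldots,\widehat{x_k},\ldots,x_n$. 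Injectivity is immediate, since the inverse map just deletes row $k$, column $w_k$, and reindexes.

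The core step is to verify that $\phi(D')$ is actually a reduced pipe dream for $w$. The base case $D' = D(\sigma)$ gives $\phi(D') = D(w)$, which is the bottom pipe dream of $w$ and therefore reduced. For general $D'$, I would exploit the known fact that any two elements of $RP(\sigma)$ are connected by a sequence of ladder moves in the $(n-1) \times (n-1)$ grid, and that each such move lifts via $\phi$ to a ladder move in the $n \times n$ grid that avoids row $k$ and column $w_k$; consequently $\phi(D') \in RP(w)$ by induction on the number of ladder moves from $D(\sigma)$. Summing $x^{\phi(D')}$ over $D' \in RP(\sigma)$ yields $M\cdot\mathfrak{S}_\sigma(x_1,\ldots,\widehat{x_k},\ldots,x_n)$, and the remaining pipe dreams of $w$ contribute the nonnegative polynomial $F$.

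The main obstacle is the pipe-routing analysis underlying the base case and the lifting. The cells of $D(w)$ in column $w_k$ lie at rows $\{i < k : w_i > w_k\}$ and those in row $k$ lie at columns $\{j < w_k : w^{-1}(j) > k\}$, and one has to check that the inserted crosses cross precisely the pipes needed to turn the permutation realized by the reindexed $D'$ (which is $\sigma$ read through the embedded subgrid) into $w$, without producing a double crossing. The length count $\ell(w) - \ell(\sigma) = |D(w)| - |D(\sigma)|$ matches the number of added crosses, so the cross count is consistent with reducedness; the technical content is checking that the pipes crossed by the inserted row-$k$ and column-$w_k$ crosses (namely the pipe entering column $w_k$ and the pipes from columns $\{w_i : i < k,\ w_i > w_k\}$ and $\{j < w_k : w^{-1}(j) > k\}$) are disjoint from the pipe pairs already crossed in the reindexed $D'$.
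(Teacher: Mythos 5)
Your approach is genuinely different from the paper's, but it has a gap at exactly the step you flag as ``the core step,'' and the argument you sketch for that step rests on a false premise. You assert that the base case $D'=D(\sigma)$ gives $\phi(D')=D(w)$, ``which is the bottom pipe dream of $w$ and therefore reduced.'' The bottom pipe dream of $w$ (in the sense of Bergeron--Billey) is the \emph{left-justified} Rothe diagram $\{(i,j): j\le c_i(w)\}$, not $D(w)$ itself; in general $D(w)$ is not a reduced pipe dream for $w$ and need not even lie inside the staircase region. Concretely, take $w=1432$ and $k=2$, so $w_k=4$, $\sigma=132$, and $M=x_2^2$. Your map sends the pipe dream $\{(1,2)\}\in RP(132)$ to $\{(1,2),(2,2),(2,3)\}$, which contains the cell $(2,3)$ with $i+j>4$ and is not a reduced pipe dream for $1432$; the monomial $x_1x_2^2$ does occur in $\mathfrak{S}_{1432}$, but it is realized by a different pipe dream, so $\phi$ as defined does not land in $RP(w)$. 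The ladder-move lifting argument inherits the same problem: a ladder move in the $(n-1)\times(n-1)$ grid that passes over the position where row $k$ is reinserted requires row $k$ of $\phi(D')$ to contain crosses in the two relevant columns, which need not hold, so moves do not lift in general. Repairing this would require either a different (non-rigid) insertion rule or a substantial pipe-routing argument that you have not supplied; the length count $\ell(w)-\ell(\sigma)=|D(w)|-|D(\sigma)|$ is consistent with reducedness but nowhere near sufficient.

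For comparison, the paper does not touch pipe dreams at all. It uses the Kra\'skiewicz--Pragacz realization $\mathfrak{S}_w=\mathrm{char}^*\mathcal{M}_{D(w)}$, interprets each coefficient of $\chi_D$ as the dimension of the span of certain products of minors $\prod_j\det(Y_{D_j}^{C_j})$ of a generic upper-triangular matrix (Corollary~\ref{cor:fms}), and proves the coefficient-wise inequality by showing that a linear dependence among the products for $D$ forces one among the products for $\widehat D$, via factoring out $\det(Y_{D_l}^{D_l})$ and extracting the coefficient of $y_{kk}^p$ (Lemma~\ref{lem:keylemma}). That argument is linear-algebraic, works for arbitrary diagrams (Theorem~\ref{thm:pattern2}), and sidesteps precisely the combinatorial routing analysis on which your proposal is stuck. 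A correct weight-preserving injection $RP(\sigma)\hookrightarrow RP(w)$ would be a nice bijective strengthening, but you have not exhibited one.
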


	Theorem~\ref{thm:pattern} is a special case of Theorem~\ref{thm:pattern2}, which applies to the dual character of the flagged Weyl module of any diagram.
	
	\subsection*{Outline of this paper}
	Section~\ref{sec:magyar} gives an expression of Magyar for Schubert polynomials in terms of orthodontic sequences $(\bm{i}, \bm{m})$. In Section~\ref{sec:suff}, we give a condition ``multiplicity-free'' on the orthodontic sequence $(\bm{i}, \bm{m})$ of $w$  which implies that $\mathfrak{S}_w$ is zero-one. In Section~\ref{sec:mult-patt} we show that multiplicity-freeness can equivalently be phrased in terms of pattern avoidance. We then prove in Section~\ref{sec:mult-patt} that multiplicity-freeness is also a necessary condition for $\mathfrak{S}_w$ to be zero-one. In the latter proof we assume Theorem~\ref{thm:pattern}, whose generalization (Theorem~\ref{thm:pattern2})  and proof is the subject of Section~\ref{sec:trans}.
	
\section{Magyar's orthodontia for Schubert polynomials}
	\label{sec:magyar}

	In this section we explain the results we use to show one direction of Theorem~\ref{thm:01}. We include the classical definition of Schubert polynomials here for reference. 
	
	The \emph{Schubert polynomial} of the longest permutation $w_0=n \hspace{.1cm} n\!-\!1 \hspace{.1cm} \cdots \hspace{.1cm} 2 \hspace{.1cm} 1 \in S_n$ is 
	\[\mathfrak{S}_{w_0}\coloneqq x_1^{n-1}x_2^{n-2}\cdots x_{n-1}.\]
	
	For $w\in S_n$, $w\neq w_0$, there exists $i\in [n-1]$ such that $w_i<w_{i+1}$. 
	For any such~$i$, the Schubert polynomial $\mathfrak{S}_{w}$ is defined as 
	\[\mathfrak{S}_{w}(x_1, \ldots, x_n)\coloneqq \partial_i \mathfrak{S}_{ws_i}(x_1, \ldots, x_n),\] 
	where $s_i$ is the transposition swapping $i$ and $i+1$, and $\partial_i$ is the $i$th divided difference operator
	\[\partial_i (f):=\frac{f(x_1,\ldots,x_n)-f(x_1,\ldots,x_{i-1},x_{i+1},x_i,x_{i+2},\ldots,x_n)}{x_i-x_{i+1}}.\]
	Since the operators $\partial_i$ satisfy the braid relations, the Schubert polynomials $\mathfrak{S}_{w}$ are well-defined. 
	
	We will not be using the above definition of Schubert polynomials in this work. Instead, we will make use of several results due to Magyar in \cite{magyar}. We start by summarizing Proposition 15 and Proposition 16 of \cite{magyar} and supplying the necessary background, closely following the exposition of \cite{magyar}.
	
	By a \emph{diagram}, we mean a sequence $D=(C_1,C_2,\ldots,C_n)$ of finite subsets of $[n]$, called the \emph{columns} of $D$. We interchangeably think of $D$ as a collection of boxes $(i,j)$ in a grid, viewing an element $i\in C_j$ as a box in row $i$ and column $j$ of the grid. When we draw diagrams, we read the indices as in a matrix: $i$ increases top-to-bottom and $j$ increases left-to-right. 
	Two diagrams $D$ and $D'$ are called \emph{column-equivalent} if one is obtained from the other by reordering nonempty columns and adding or removing any number of empty columns. For a column $C\subseteq [n]$, let the \emph{multiplicity} $\mathrm{mult}_D(C)$ be the number of columns of $D$ which are equal to $C$. The sum of diagrams, denoted $D\oplus D'$, is constructed by concatenating the lists of columns; graphically this means placing $D'$ to the right of $D$. 
	
	The \emph{Rothe diagram} $D(w)$ of a permutation $w\in S_n$ is the diagram
	\[ D(w)=\{(i,j)\in [n]\times [n] \mid i<(w^{-1})_j\mbox{ and } j<w_i \}. \]
	Note that Rothe diagrams have the \emph{northwest property}: If $(r,c'),(r',c)\in D(w)$ with $r<r'$ and $c<c'$, then $(r,c)\in D(w)$.
	
	\begin{example}
		If $w=31542$, then 
	\begin{center}
		\begin{tikzpicture}[scale=.55]
			\draw (0,0)--(5,0)--(5,5)--(0,5)--(0,0);
%			\draw (5,4.5) -- (2.5,4.5) node {$\bullet$} -- (2.5,0);
%			\draw (5,3.5) -- (0.5,3.5) node {$\bullet$} -- (0.5,0);
%			\draw (5,2.5) -- (4.5,2.5) node {$\bullet$} -- (4.5,0);
%			\draw (5,1.5) -- (3.5,1.5) node {$\bullet$} -- (3.5,0);
%			\draw (5,0.5) -- (1.5,0.5) node {$\bullet$} -- (1.5,0);
			
			\filldraw[draw=black,fill=lightgray] (0,4)--(1,4)--(1,5)--(0,5)--(0,4);
			\filldraw[draw=black,fill=lightgray] (1,4)--(2,4)--(2,5)--(1,5)--(1,4);
			\filldraw[draw=black,fill=lightgray] (1,2)--(2,2)--(2,3)--(1,3)--(1,2);
			\filldraw[draw=black,fill=lightgray] (1,1)--(2,1)--(2,2)--(1,2)--(1,1);
			\filldraw[draw=black,fill=lightgray] (3,2)--(4,2)--(4,3)--(3,3)--(3,2);
			
			\node at (-1.5,2.5) {$D(w)=$};
			
			\node at (8.9,2.5) {$=(\{1\},\{1,3,4\},\emptyset,\{3\},\emptyset).$};
		\end{tikzpicture}
	\end{center}
	\end{example}
	
	We next recall Magyar's orthodontia. Let $D$ be the Rothe diagram of a permutation $w\in S_n$ with columns $C_1,C_2,\ldots,C_n$. We describe an algorithm for constructing a reduced word $\bm{i}=(i_1,\ldots,i_l)$ and a multiplicity list $\bm{m}=(k_1,\ldots,k_n;\,m_1,\ldots,m_l)$ such that the diagram $D_{\bm{i},\bm{m}}$ defined by
	\[D_{\bm{i},\bm{m}} = \bigoplus_{j=1}^n {k_j\cdot [j]} \quad \oplus \quad \bigoplus_{j=1}^l m_j\cdot (s_{i_1}s_{i_2}\cdots s_{i_j}[i_j]), \]
	is column-equivalent to~$D$. In the above, $m\cdot C$ denotes $C\oplus \cdots\oplus C$ with $m$ copies of~$C$; in particular $0\cdot C$ should be interpreted as a diagram with no columns, not the empty column.
	
	The algorithm to produce $\bm{i}$ and $\bm{m}$ from $D$ is as follows. To begin the first step, for each $j\in [n]$ let $k_j=\mathrm{mult}_D([j])$, the number of columns of $D$ of the form $[j]$. Replace all such columns by empty columns for each $j$ to get a new diagram $D_-$.
	
	Given a column $C\subseteq [n]$, a \emph{missing tooth} of $C$ is a positive integer $i$ such that $i\notin C$, but $i+1\in C$. The only columns without missing teeth are the empty column and the intervals $[i]$. Hence the first nonempty column of $D_-$ (if there is any) contains a smallest missing tooth $i_1$. Switch rows $i_1$ and $i_1+1$ of $D_-$ to get a new diagram $D'$.
	
	In the second step, repeat the above with $D'$ in place of $D$. That is, let $m_1=\mathrm{mult}_{D'}([i_1])$ and replace all columns of the form $[i_1]$ in $D'$ by empty columns to get a new diagram $D_-'$. Find the smallest missing tooth $i_2$ of the first nonempty column of $D_-'$, and switch rows $i_2$ and $i_2+1$ of $D_-'$ to get a new diagram $D''$.
	
	Continue in this fashion until no nonempty columns remain. It is easily seen that the sequences $\bm{i}=(i_1,\ldots,i_l)$ and $\bm{m}=(k_1,\ldots,k_n;\,m_1,\ldots,m_l)$ just constructed have the desired properties.
	
	\begin{definition}
		\label{def:imsequence}
		The pair $(\bm{i},\bm{m})$ constructed from the preceding algorithm is called the \emph{orthodontic sequence} of $w$.
	\end{definition}
	
	\begin{example}
		If $w=31542$, then the orthodontic sequence algorithm produces the diagrams
		\begin{center}
			\includegraphics[scale=.85]{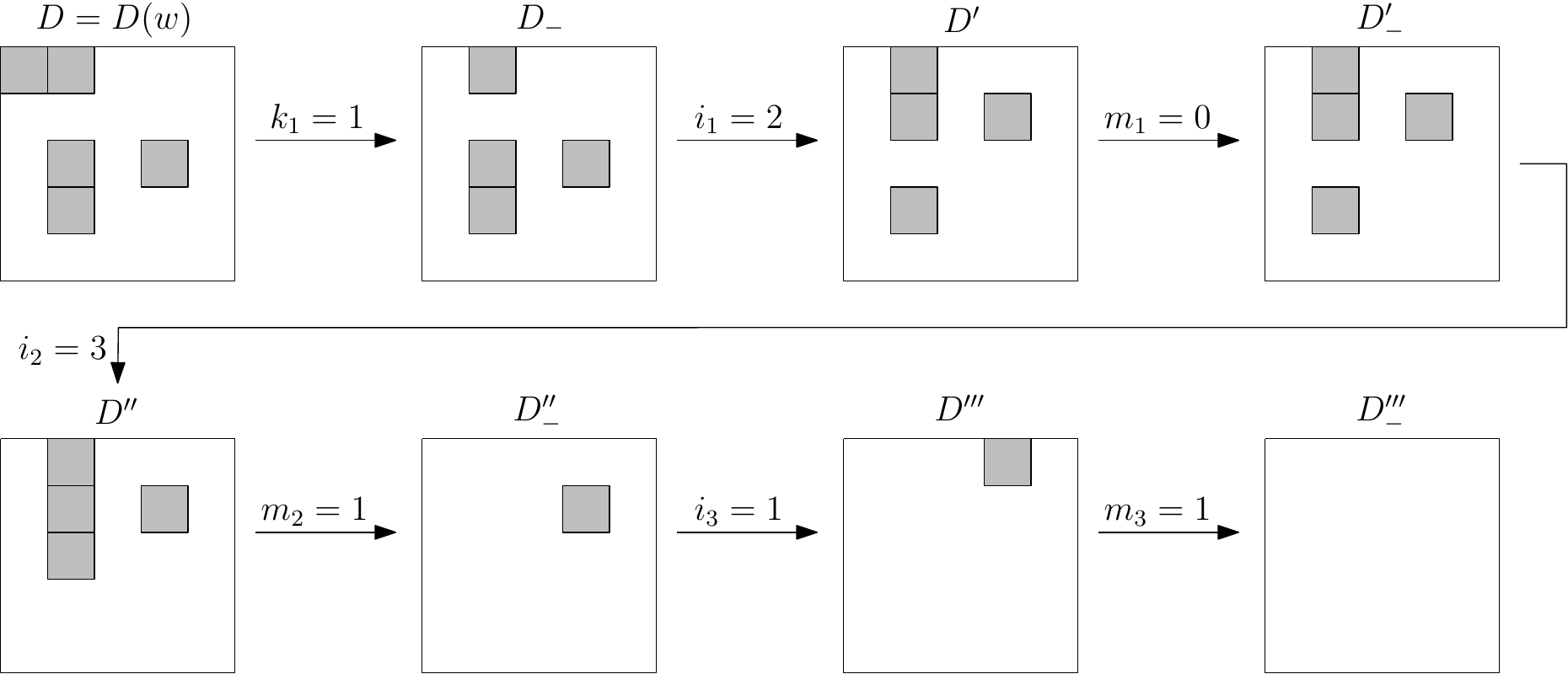}
		\end{center}
		The sequence of missing teeth gives $\bm{i}=(2,3,1)$ and $\bm{m}=(1,0,0,0,0;\,0,1,1)$, so
		\begin{center}
			\begin{tikzpicture}[scale=.5]
				\draw (9.5,0)--(12.5,0)--(12.5,5)--(9.5,5)--(9.5,0);
				\filldraw[draw=black,fill=lightgray] (9.5,4)--(10.5,4)--(10.5,5)--(9.5,5)--(9.5,4);
				\filldraw[draw=black,fill=lightgray] (10.5,4)--(11.5,4)--(11.5,5)--(10.5,5)--(10.5,4);
				\filldraw[draw=black,fill=lightgray] (10.5,2)--(11.5,2)--(11.5,3)--(10.5,3)--(10.5,2);
				\filldraw[draw=black,fill=lightgray] (10.5,1)--(11.5,1)--(11.5,2)--(10.5,2)--(10.5,1);
				\filldraw[draw=black,fill=lightgray] (11.5,2)--(12.5,2)--(12.5,3)--(11.5,3)--(11.5,2);
				\node at (8,2.5) {$D_{\bm{i},\bm{m}}=$};
				\node at (12.7,2.5) {.};
			\end{tikzpicture}
		\end{center}
	\end{example}

	\begin{theorem}[{\cite[Proposition~15]{magyar}}]
		\label{thm:magyaroperatortheorem}
		Let $w\in S_n$ have orthodontic sequence $(\bm{i},\bm{m})$. If $\pi_j=\partial_j x_j$ denotes the $j$th Demazure operator and $\omega_j=x_1x_2\cdots x_j$, then
		\[\mathfrak{S}_w = \omega_1^{k_1}\cdots\omega_n^{k_n}\pi_{i_1}(\omega_{i_1}^{m_1} \pi_{i_2}(\omega_{i_2}^{m_2}\cdots \pi_{i_l}(\omega_{i_l}^{m_l})\cdots )). \]
	\end{theorem}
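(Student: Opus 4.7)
The plan is to invoke the theory of flagged Weyl modules and the Kra\'skiewicz--Pragacz identification of $\mathfrak{S}_w$ with the character $\chi_{D(w)}$ of the flagged Weyl module $\mathcal{M}_{D(w)}$ attached to the Rothe diagram of $w$. Because column-equivalent diagrams yield the same character (reordering nonempty columns and inserting empty columns does not alter the defining linear conditions of $\mathcal{M}$), it suffices to compute $\chi_{D_{\bm{i},\bm{m}}}$, and we may process the columns in whichever order the orthodontic algorithm dictates.

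First I would establish three structural properties of $\chi_D$ for a diagram $D$. The base case: $\chi_{[j]} = \omega_j = x_1 x_2 \cdots x_j$, immediate from the determinantal description of $\mathcal{M}_{[j]}$. A multiplicativity statement: in the situations that arise during the algorithm, $\chi_{D \oplus D'} = \chi_D \cdot \chi_{D'}$. And the central ingredient, a Demazure / row-swap identity: if $D'$ is obtained from $D$ by swapping rows $i$ and $i+1$, and $i$ is the smallest missing tooth of the first nonempty column of $D$, then
\[ \chi_D = \pi_i \chi_{D'}. \]
This reflects the fact that, under the missing-tooth hypothesis, the row swap extends $\mathcal{M}_{D'}$ from a Borel module to a module for the minimal parabolic $P_i$, and the operation of taking characters intertwines this extension with the action of $\pi_i$.

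Granting these three ingredients, the theorem follows by reading off the orthodontic algorithm at the level of characters. In the initial round, peeling off the columns of the form $[j]$ contributes the prefactor $\omega_1^{k_1}\cdots\omega_n^{k_n}$ by the base case and multiplicativity. At the $j$th subsequent round, one peels off the columns $[i_j]$ to contribute $\omega_{i_j}^{m_j}$ multiplicatively, and the row swap at $i_j$ inserts $\pi_{i_j}$ on the outside by the third property. After all $l$ missing-tooth swaps the diagram is empty, so its character is $1$, and the accumulated expression is exactly the nested Demazure formula of Theorem~\ref{thm:magyaroperatortheorem}.

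The main obstacle is the row-swap identity $\chi_D = \pi_i \chi_{D'}$. Proving it requires exhibiting $\mathcal{M}_D$ as a specific $P_i$-module built from $\mathcal{M}_{D'}$, typically via a short exact sequence of $P_i$-modules whose characters transform under $\pi_i$ in the desired way. The smallest-missing-tooth hypothesis is essential: it guarantees that swapping rows $i$ and $i+1$ preserves the combinatorial shape of the remaining columns and that the flagged determinantal relations defining $\mathcal{M}_D$ extend compatibly to $P_i$. The multiplicativity step is shallower but still requires a careful decoupling of the defining ideals of the two summands to ensure that the characters multiply as claimed.
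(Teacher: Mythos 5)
The paper does not prove this statement; it is quoted verbatim from Magyar \cite{magyar} (his Proposition 15), so the only meaningful comparison is with Magyar's own argument, and your outline is essentially a reconstruction of it. The reduction via Theorem~\ref{thm:kp} and column-equivalence is fine, and the multiplicativity you worry about is never needed in general (where it is in fact false): the algorithm only ever peels off interval columns, and $\chi_{[j]\oplus E}=\omega_j\,\chi_E$ holds for the cheap reason that $\mathcal{M}_{[j]}$ is one-dimensional, spanned by $y_{11}y_{22}\cdots y_{jj}$, so that $\mathcal{M}_{[j]\oplus E}=y_{11}\cdots y_{jj}\cdot\mathcal{M}_E$ and multiplication by this element is injective on $\mathbb{C}[Y]$. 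That step is not a real obstacle.

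The genuine gap is the row-swap identity $\chi_{D}=\pi_i\,\chi_{D'}$, which you correctly single out as the main obstacle but do not prove; it carries the entire mathematical content of the theorem, the rest being bookkeeping. The gesture toward ``a short exact sequence of $P_i$-modules whose characters transform under $\pi_i$'' is not yet an argument: inducing a $B$-module to the minimal parabolic $P_i$ does not in general act on characters by $\pi_i$, and making it do so requires the Demazure character formula together with a higher-cohomology vanishing statement. In Magyar's proof this is supplied geometrically: $\mathcal{M}_D$ is realized as the space of sections of a line bundle on a configuration (Bott--Samelson-type) variety, the row swap corresponds to a $\mathbb{P}^1$-fibration, and normality plus vanishing (via Frobenius splitting or characteristic-zero analogues) yields the $\pi_i$ recursion. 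You would also need to check that the hypotheses of the identity persist at every stage of the orthodontia: the swap of rows $i_j$ and $i_j+1$ acts on \emph{all} remaining columns at once, and it is the northwest property of the intermediate diagrams that guarantees the columns other than the leftmost nonempty one are affected compatibly. As written, your proposal is a correct roadmap with its central lemma left as a black box.
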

	
	\begin{example}
		For $w=31542$, it is easily checked that
		\[\mathfrak{S}_w=x_1\pi_2\pi_3(x_1x_2x_3\pi_1(x_1)). \]
	\end{example}
	
	Theorem~\ref{thm:magyaroperatortheorem} can also be realized on the level of tableaux, analogous to Young tableaux in the case of Schur polynomials. A \emph{filling} (with entries in $\{1,...,n\}$) of a diagram $D$ is a map $T$ assigning to each box in $D$ an integer in $[n]$. A filling $T$ is called \emph{column-strict} if $T$ is strictly increasing down each column of $D$. The \emph{weight} of a filling $T$ is the vector $wt(T)$ whose $i$th component $wt(T)_i$ is the number of times $i$ occurs in $T$. 

	Given a permutation $w\in S_n$ with orthodontic sequence $(\bm{i},\bm{m})$, we will define a set $\mathcal{T}_w$ of fillings of the diagram $D_{\bm{i},\bm{m}}$ which satisfy 
	\[\mathfrak{S}_w=\sum_{T\in\mathcal{T}_w}x_1^{wt(T)_1}x_2^{wt(T)_2}\cdots x_n^{wt(T)_n}. \]
	We start by recalling the \emph{root operators}, first defined in \cite{rootoperators}. These are operators $f_i$ which either take a filling $T$ of a diagram $D$ to another filling of $D$, or are undefined on $T$. To define root operators, we first encode a filling $T$ in terms of its reading word. The \emph{reading word} of a filling $T$ of a diagram $D=D_{\bm{i},\bm{m}}$ is the sequence of the entries of $T$ read in order, down each column going left-to-right along columns; that is the sequence 
	\[T(1,1),T(2,1),\ldots,T(n,1),T(1,2),T(2,2),\ldots,T(n,2),\ldots, T(n,n) \]
	ignoring any boxes $(i,j)\notin D$.
	
	If it is defined, the operator $f_i$ changes an entry of $i$ in $T$ to an entry of $i+1$ according to the following rule. First, ignore all the entries in $T$ except those which equal $i$ or $i+1$. Now ``match parentheses'':
	if, in the list of entries not yet ignored, an $i$ is followed by an $i+1$, then henceforth ignore that pair of entries as well; look again for an $i$ followed (up to ignored entries) by an $i+1$, and henceforth ignore this pair; continue doing this until all no such pairs remain unignored. The remaining entries of $T$ will be a subword of the form $i+1,i+1,\ldots,i+1,i,i,\ldots,i$. If $i$ does not appear in this word, then $f_i(T)$ is undefined. Otherwise, $f_i$ changes the leftmost $i$ to an $i+1$. Reading the image word back into $D$ produces a new filling. We can iteratively apply $f_i$ to a filling $T$. 
	
	\begin{example}
		If $T=3\,1\,2\,2\,2\,1\,3\,1\,2\,4\,3\,2\,4\,1\,3\,1$, applying $f_1$ iteratively to $T$ yields:
		\begin{center}
			\begin{tabular}{rccccccccccccccccc}
				$\phantom{f_1(}T\phantom{)}$=&3&1&2&2&2&1&3&1&2&4&3&2&4&1&3&1&\\
				$\phantom=$&$\cdot$&1&2&2&2&1&$\cdot$&1&2&$\cdot$&$\cdot$&2&$\cdot$&1&$\cdot$&1&\\
				$\phantom=$&$\cdot$&$\cdot$&$\cdot$&2&2&1&$\cdot$&$\cdot$&$\cdot$&$\cdot$&$\cdot$&2&$\cdot$&1&$\cdot$&1&\\
				$\phantom=$&$\cdot$&$\cdot$&$\cdot$&2&2&$\cdot$&$\cdot$&$\cdot$&$\cdot$&$\cdot$&$\cdot$&$\cdot$&$\cdot$&1&$\cdot$&1&\\
				$f_1(T)=$&3&1&2&2&2&1&3&1&2&4&3&2&4&\textbf{2}&3&1&\\
				$f_1^2(T)=$&3&1&2&2&2&1&3&1&2&4&3&2&4&2&3&\textbf{2}&\\
				$f_1^3(T)\mathrel{\phantom=}$&&&&&&&&&&&&&&&&&\hspace{-45ex}\mbox{is undefined}
			\end{tabular}
		\end{center}
	\end{example}
	
	Define the set-valued \emph{quantized Demazure operator} $\widetilde{\pi}_i$ by $\widetilde{\pi}_i(T)=\{T,f_i(t),f_i^2(T),\ldots\}$. For a set $\mathcal{T}$ of tableaux, let 
	\[\widetilde{\pi}_{i}(\mathcal{T})=\bigcup_{T\in\mathcal{T}}\widetilde{\pi}_i(T).\]
	Next, consider the column $[j]$ and its minimal column-strict filling $\widetilde{\omega}_j$ ($j$th row maps to $j$). For a filling $T$ of a diagram $D$ with columns $(C_1,C_2,\ldots,C_n)$, define in the  obvious way the composite filling of $[j]\oplus D$, corresponding to concatenating the reading words of $[j]$ and $D$. Define $[j]^r\oplus D$ analogously by adding $r$ columns $[j]$ to $D$, each with filling $\widetilde{\omega}_j$.
	
	\begin{definition}
		\label{def:magyartableauxset}
		Let $w\in S_n$ be a permutation with orthodontic sequence $(\bm{i},\bm{m})$. Define the set $\mathcal{T}_w$ of tableaux by
		\[\mathcal{T}_w=\widetilde{\omega}_1^{k_1}\oplus\cdots\oplus\widetilde{\omega}_n^{k_n}\oplus\widetilde{\pi}_{i_1}(\widetilde{\omega}_{i_1}^{m_1}\oplus \widetilde{\pi}_{i_2}(\widetilde{\omega}_{i_2}^{m_2}\oplus\cdots \oplus\widetilde{\pi}_{i_l}(\widetilde{\omega}_{i_l}^{m_l})\cdots )). \]
	\end{definition}

	\begin{theorem}[{\cite[Proposition~16]{magyar}}]
		\label{thm:magyartableauxtheorem}
		Let $w\in S_n$ be a permutation with orthodontic sequence $(\bm{i},\bm{m})$. Then,
		\[\mathfrak{S}_w=\sum_{T\in\mathcal{T}_w}x_1^{wt(T)_1}x_2^{wt(T)_2}\cdots x_n^{wt(T)_n}. \]
	\end{theorem}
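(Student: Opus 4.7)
The plan is to lift the operator recursion of Theorem~\ref{thm:magyaroperatortheorem} to the tableau level, by showing that the weight generating function $\mathcal{T} \mapsto \sum_{T \in \mathcal{T}} x^{wt(T)}$ intertwines concatenation $\oplus$ with multiplication of polynomials, and intertwines $\widetilde\pi_i$ with $\pi_i$. I would proceed by induction on the nesting depth of the orthodontic expression, from the innermost composition outward, so that each step of the induction matches the operator formula layer by layer.

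The base ingredients are straightforward: the single tableau $\widetilde\omega_j$ has weight monomial $x_1\cdots x_j = \omega_j$, so $\widetilde\omega_j^{k}$ contributes $\omega_j^{k}$; and concatenation of tableaux adds weights, so $\oplus$ corresponds to multiplication of generating functions. The main identity to establish is the crystal-theoretic statement
\[
\sum_{T \in \widetilde\pi_i(\mathcal{S})} x^{wt(T)} = \pi_i\!\left(\sum_{T \in \mathcal{S}} x^{wt(T)}\right),
\]
valid whenever every $T \in \mathcal{S}$ is $f_i$-highest weight, i.e.\ has no $i+1$ left unmatched by the parenthesization rule in its reading word. Since $f_i$ trades an $i$ for an $i+1$, such a $T$ with $(wt(T)_i, wt(T)_{i+1}) = (a, b)$ satisfies $a \ge b$ and generates an $i$-string of $a - b + 1$ tableaux under iterated $f_i$. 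The sum of weight monomials along this string equals $\sum_{k=0}^{a-b} x_i^{a-k} x_{i+1}^{b+k}$ times the common contribution of the other variables, which equals $\pi_i(x^{wt(T)})$ by the direct computation $\pi_i(x_i^a x_{i+1}^b) = x_i^a x_{i+1}^b + x_i^{a-1}x_{i+1}^{b+1} + \cdots + x_i^b x_{i+1}^a$ valid for $a\ge b$. Since $\widetilde\pi_i(\mathcal{S})$ is the disjoint union of such complete $i$-strings over $\mathcal{S}$, summation yields the claim.

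To apply this inductively at each nesting level $j$, I need every element of $\widetilde\omega_{i_j}^{m_j} \oplus \mathcal{T}_{j+1}$ to be $f_{i_j}$-highest weight, where $\mathcal{T}_{j+1}$ denotes the set produced by the deeper levels. The reading word of $\widetilde\omega_{i_j}^{m_j}$ begins with $m_j$ copies of $i_j$ and no $i_j+1$; these leading $i_j$'s are precisely the ones matched, from the left, against any unmatched $i_j+1$'s produced later in the reading word of a $T \in \mathcal{T}_{j+1}$. The needed bound, that the number of unmatched $i_j+1$'s in any such $T$ is at most $m_j$, is the main obstacle of the proof. I would establish it by a nested induction showing that the inner operators $\widetilde\pi_{i_{j+1}}, \widetilde\pi_{i_{j+2}}, \ldots$ preserve the required control on the $(i_j, i_j+1)$-parenthesization, and that the orthodontic algorithm's choice of $m_j$ as the multiplicity of column $[i_j]$ after the $j$-th row swap is exactly what is required. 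Granting this Demazure-highest condition, combining the three ingredients---concatenation as multiplication, the crystal Demazure identity, and the leading prefactor $\omega_1^{k_1}\cdots\omega_n^{k_n}$---reproduces the operator recursion of Theorem~\ref{thm:magyaroperatortheorem} and yields the desired equality.
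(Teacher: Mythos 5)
The paper does not actually prove this statement: it is quoted from Magyar \cite{magyar}, so there is no internal proof to compare against. Your strategy---lifting the operator recursion of Theorem~\ref{thm:magyaroperatortheorem} to tableaux by showing that $\oplus$ corresponds to multiplication of weight generating functions and $\widetilde{\pi}_i$ to $\pi_i$---is the standard (and essentially Magyar's) route. Your computation of the character of a complete $i$-string is correct: if $T$ has no unmatched $i+1$ in its reading word and $(wt(T)_i,wt(T)_{i+1})=(a,b)$, then $a\ge b$, the string $T,f_i(T),\ldots,f_i^{a-b}(T)$ has character $\pi_i(x^{wt(T)})$, and distinct such $T$ generate disjoint strings, so the set-valued union causes no overcounting.

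The genuine gap is that essentially all of the content of the theorem beyond this bookkeeping lives in the hypothesis you defer: that for every $j$, the $(i_j,i_j{+}1)$-restricted reading word of each element of $\widetilde{\pi}_{i_{j+1}}(\widetilde{\omega}_{i_{j+1}}^{m_{j+1}}\oplus\cdots)$ reduces under the parenthesization rule to $(i_j{+}1)^p(i_j)^q$ with $p\le m_j$, so that prepending the $m_j$ letters $i_j$ contributed by $\widetilde{\omega}_{i_j}^{m_j}$ leaves no unmatched $i_j{+}1$. This hypothesis cannot be weakened: if $p>0$ survives, the $i$-string through $T$ has $q+1$ elements while $\pi_i(x^{wt(T)})$ has $q-p+1$ terms, and the intertwining identity fails term by term. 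You state that ``a nested induction'' would establish the bound, but give no indication of how; yet this is precisely where the specific structure of the orthodontia must enter---the northwest property of the intermediate diagrams $O(w,r)$, the row-flagged and column-strict properties of the fillings (Lemmas~\ref{lem:fillings} and~\ref{lem:rowflagged}), the particular right-to-left column reading order, and the fact that $m_j$ is defined as the multiplicity of the column $[i_j]$ at step $j$. Without carrying out that verification the argument is a correct reduction of the theorem to its hardest combinatorial lemma, not a proof. (A minor imprecision: the reading word of $\widetilde{\omega}_{i_j}^{m_j}$ is $1,2,\ldots,i_j$ repeated $m_j$ times, not an initial block of $i_j$'s; your description is accurate only after restricting to the letters $i_j$ and $i_j+1$.)
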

	
	\begin{example}
		Consider again $w=31542$, so the orthodontic sequence of $w$ is $\bm{i}=(2,3,1)$ and $\bm{m}=(1,0,0,0,0; 0,1,1)$.
		The set $\mathcal{T}_w$ is built up as follows:
		\begin{align*}
			\{\}&\xrightarrow{\widetilde{\omega}_1}\{1\}\xrightarrow{\widetilde{\pi}_1}\{1,2\}\xrightarrow{\widetilde{\omega}_3}\{1231,1232\}\xrightarrow{\widetilde{\pi}_3}\{1231,1241,1232,1242\}\\
			&\xrightarrow{\widetilde{\pi}_2}\{1231,1241,1341,1232,1233,1242,1342,1343\}\\
			&\xrightarrow{\widetilde{\omega}_1}\{11231,11241,11341,11232,11233,11242,11342,11343\}
		\end{align*}
		which agrees with
		\[\mathfrak{S}_{w}=x_1^3x_2x_3+x_1^3x_2x_4+x_1^3x_3x_4+x_1^2x_2^2x_3+x_1^2x_2x_3^2+x_1^2x_2^2x_4+x_1^2x_2x_3x_4+x_1^2x_3^2x_4.\]
	\end{example}

	We now describe a way to view each step of the construction of $\mathcal{T}_w$ as producing a set of fillings of a diagram.
	\begin{definition}
		\label{def:partialfilling}
		Let $w$ be a permutation with orthodontic sequence $(\bm{i},\bm{m})$, $\bm{i}=(i_1,\ldots,i_l)$. For each $r\in [l]$, define 
		\[\mathcal{T}_w(r)=\widetilde{\omega}_{i_r}^{m_r}\oplus\widetilde{\pi}_{i_{r+1}}(\widetilde{\omega}_{i_{r+1}}^{m_{r+1}}\oplus\cdots\oplus \widetilde{\pi}_{i_l}(\widetilde{\omega}_{i_l})\cdots). \]
		Set $\mathcal{T}_w(0)=\mathcal{T}_w$.
	\end{definition}
	\begin{definition}
		\label{def:partialdiagram}
		Let $w$ be a permutation with orthodontic sequence $(\bm{i},\bm{m})$, $\bm{i}=(i_1,\ldots,i_l)$. For any $r \in [l]$, let $O(w,r)$ be the diagram obtained from $D(w)$ in the construction of $(\bm{i},\bm{m})$ at the time when the row swaps of the missing teeth $i_1,\ldots,i_{r}$ have all been executed on $D(w)$, but after executing the row swap of the missing tooth $i_r$, columns without missing teeth have not yet been removed ($m_r$ has not yet been recorded). Set $O(w,0)=D(w)$. For each $r$, give $O(w,r)$ the same column indexing as $D(w)$, so any columns replaced by empty columns in the execution of the missing teeth $i_1,\ldots,i_{r-1}$ retain their original index in $D(w)$.
	\end{definition}

	The motivation behind Definition~\ref{def:partialfilling} and Definition~\ref{def:partialdiagram} is that the elements of $\mathcal{T}_w(r)$ can be viewed as column-strict fillings of $O(w,r)$ for each $r$. To do this, the choice of filling order for $O(w,r)$ is crucial. Let $w\in S_n$ and consider $D=D(w)$ and $D_{\bm{i},\bm{m}}$. Suppose $D$ has $z$ nonempty columns. There is a unique permutation $\tau$ of $[n]$ taking the column indices of $D$ to the column indices of $D_{\bm{i},\bm{m}}\oplus \emptyset^{n-z}$ with the following properties:
	\begin{itemize}
		\item Column $c$ of $D$ is the same as column $\tau(c)$ of $D_{\bm{i},\bm{m}}$.
		\item If column $c$ and column $c'$ of $D$ are equal with $c<c'$, then $\tau(c)<\tau(c')$.
	\end{itemize} 
	Recall that the columns of $O(w,r)$ have the same column labels as $D$. To read an element $T\in \mathcal{T}_w(r)$ into $O(w,r)$, read $T$ left-to-right and fill in top-to-bottom columns $\tau^{-1}(n),\,\tau^{-1}(n-1),\ldots,\tau^{-1}(1)$ (ignoring any column indices referring to empty columns).
	
	\begin{lemma}
		\label{lem:fillings}
		Let $w\in S_n$ have orthodontic sequence $(\bm{i},\bm{m})$, $\bm{i}=(i_1,\ldots,i_l)$. In the filling order specified above, the elements of $\mathcal{T}_w(r)$ are column-strict fillings of $O(w,r)$ for each $0\leq r\leq l$.
	\end{lemma}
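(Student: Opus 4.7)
I would proceed by reverse induction on $r$, from $r = l$ down to $r = 0$. The core observation is that, under the filling order specified above, column-strictness of the filling on $O(w,r)$ is equivalent to column-strictness of each tableau column on its own natural shape $[i_j]$: each tableau column of shape $[i_j]$ has the same size as the corresponding $O(w,r)$-column (which is $s_{i_{r+1}}\cdots s_{i_j}[i_j]$ when $j>r$, a row-permutation of $[i_j]$), and entries are placed top-to-bottom in both columns, so a strictly increasing entry sequence in one gives a strictly increasing entry sequence in the other.

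The base case $r = l$ is immediate: $\mathcal{T}_w(l) = \widetilde{\omega}_{i_l}^{m_l}$ consists of $m_l$ copies of the standard column-strict filling of $[i_l]$, and these are matched via $\tau$ with the $m_l$ nonempty columns of $O(w,l)$, which are all of shape $[i_l]$ since the orthodontic algorithm terminates at this step.

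For the inductive step, elements of $\mathcal{T}_w(r)$ are obtained from those of $\mathcal{T}_w(r+1)$ by first applying $\widetilde{\pi}_{i_{r+1}}$ and then prepending a block of standard column-strict fillings (namely $\widetilde{\omega}_{i_r}^{m_r}$ for $r \geq 1$, or $\widetilde{\omega}_1^{k_1}\oplus\cdots\oplus\widetilde{\omega}_n^{k_n}$ for $r=0$). The prepended block is column-strict by definition and corresponds to the new $[i_r]$- or $[j]$-shape columns present in $O(w,r)$ but absent from $O(w,r+1)$, so the heart of the inductive step is to show that $f_{i_{r+1}}$ preserves column-strictness on the tableau shape.

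To prove this key claim, suppose a column-strict filling contains both entries $i$ and $i+1$ in a common column. Since a strictly increasing integer sequence cannot admit any value strictly between $i$ and $i+1$, the two entries sit in adjacent rows of the column and hence appear consecutively in the reading word. Under parenthesis matching, an adjacent $i,i+1$ pair is matched first, so this $i$ is never changed by $f_i$. Therefore, whenever $f_i$ promotes an $i$ to $i+1$ in some column, the entry directly below in that column, if it exists, is strictly greater than $i$ and cannot equal $i+1$, so it is strictly greater than $i+1$; the strict increase down the column is preserved. The main obstacle will be articulating this key claim carefully (in particular the adjacency-in-the-reading-word step, which relies on there being no integer strictly between $i$ and $i+1$); once it is in hand, iteration yields column-strictness of every element of $\mathcal{T}_w(r)$ on the tableau shape, and the $\tau$-bijection transfers this to column-strictness on $O(w,r)$.
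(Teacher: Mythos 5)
Your proof is correct. Note that the paper actually states Lemma~\ref{lem:fillings} with no proof at all (it is treated as routine, essentially implicit in Magyar's construction), so there is no argument of the authors' to compare yours against; your reverse induction on $r$ is the natural one and has the same skeleton as the proof the paper does supply for the companion Lemma~\ref{lem:rowflagged}. The two ingredients you isolate are the right ones. First, column-strictness of a filling of $O(w,r)$ depends only on each column's entry sequence being strictly increasing when read top-to-bottom, and the filling order pairs each tableau column of shape $[i_j]$ with an $O(w,r)$-column of the same cardinality $i_j$ (namely $s_{i_{r+1}}\cdots s_{i_j}[i_j]$, since row swaps never change the size of a column); this is true by construction, so the property transfers between the two shapes exactly as you say. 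Second, your justification that $f_i$ preserves strict increase down columns is complete: in a strictly increasing column an $i$ and an $i+1$ can only occupy consecutive boxes of that column, hence are literally adjacent in the reading word, hence are paired in the parenthesis matching, so the $i$ that $f_i$ promotes lies in a column with no $i+1$ and the entry below it (if any) is at least $i+2$; the entry above it is less than $i$ and so causes no trouble. This is the standard column-strictness-preservation property of the Lascoux--Sch\"utzenberger root operators, and stating it via the adjacency/matching argument is exactly how one would fill the gap the paper leaves.
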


	\begin{example}
		Take again $w=31542$ with orthodontic sequence $\bm{i}=(2,3,1)$ and $\bm{m}=(1,0,0,0,0;0,1,1)$.
		Recall that 
		\begin{center}
			\begin{tikzpicture}[scale=.5]
				\draw (0,0)--(5,0)--(5,5)--(0,5)--(0,0);
				\filldraw[draw=black,fill=lightgray] (0,4)--(1,4)--(1,5)--(0,5)--(0,4);
				\filldraw[draw=black,fill=lightgray] (1,4)--(2,4)--(2,5)--(1,5)--(1,4);
				\filldraw[draw=black,fill=lightgray] (1,2)--(2,2)--(2,3)--(1,3)--(1,2);
				\filldraw[draw=black,fill=lightgray] (1,1)--(2,1)--(2,2)--(1,2)--(1,1);
				\filldraw[draw=black,fill=lightgray] (3,2)--(4,2)--(4,3)--(3,3)--(3,2);
				\node at (-1.6,2.5) {$D(w)=$};
				\node at (6,2.5) {and};
				
				\draw (9.5,0)--(12.5,0)--(12.5,5)--(9.5,5)--(9.5,0);
				\filldraw[draw=black,fill=lightgray] (9.5,4)--(10.5,4)--(10.5,5)--(9.5,5)--(9.5,4);
				\filldraw[draw=black,fill=lightgray] (10.5,4)--(11.5,4)--(11.5,5)--(10.5,5)--(10.5,4);
				\filldraw[draw=black,fill=lightgray] (10.5,2)--(11.5,2)--(11.5,3)--(10.5,3)--(10.5,2);
				\filldraw[draw=black,fill=lightgray] (10.5,1)--(11.5,1)--(11.5,2)--(10.5,2)--(10.5,1);
				\filldraw[draw=black,fill=lightgray] (11.5,2)--(12.5,2)--(12.5,3)--(11.5,3)--(11.5,2);
				\node at (8,2.5) {$D_{\bm{i},\bm{m}}=$};
				\node at (12.7,2.5) {,};
				
			\end{tikzpicture}
		\end{center}
		so $\tau=12435=\tau^{-1}$. Consider the elements $1\in\mathcal{T}_w(3)$, $1232\in\mathcal{T}_w(2)$, $1242\in\mathcal{T}_w(1)$, and $11342\in\mathcal{T}_w(0)$. The column filling order of each $O(w,r)$ is given by reading $\tau^{-1}$ in one-line notation right to left: in the indexing of $D(w)$, fill down column 4, then down column 2, then down column 1. The elements of each set $\mathcal{T}_w(r)$ are column-strict fillings in the corresponding diagrams $O(w,r)$:
		\begin{center}
			\includegraphics[scale=1]{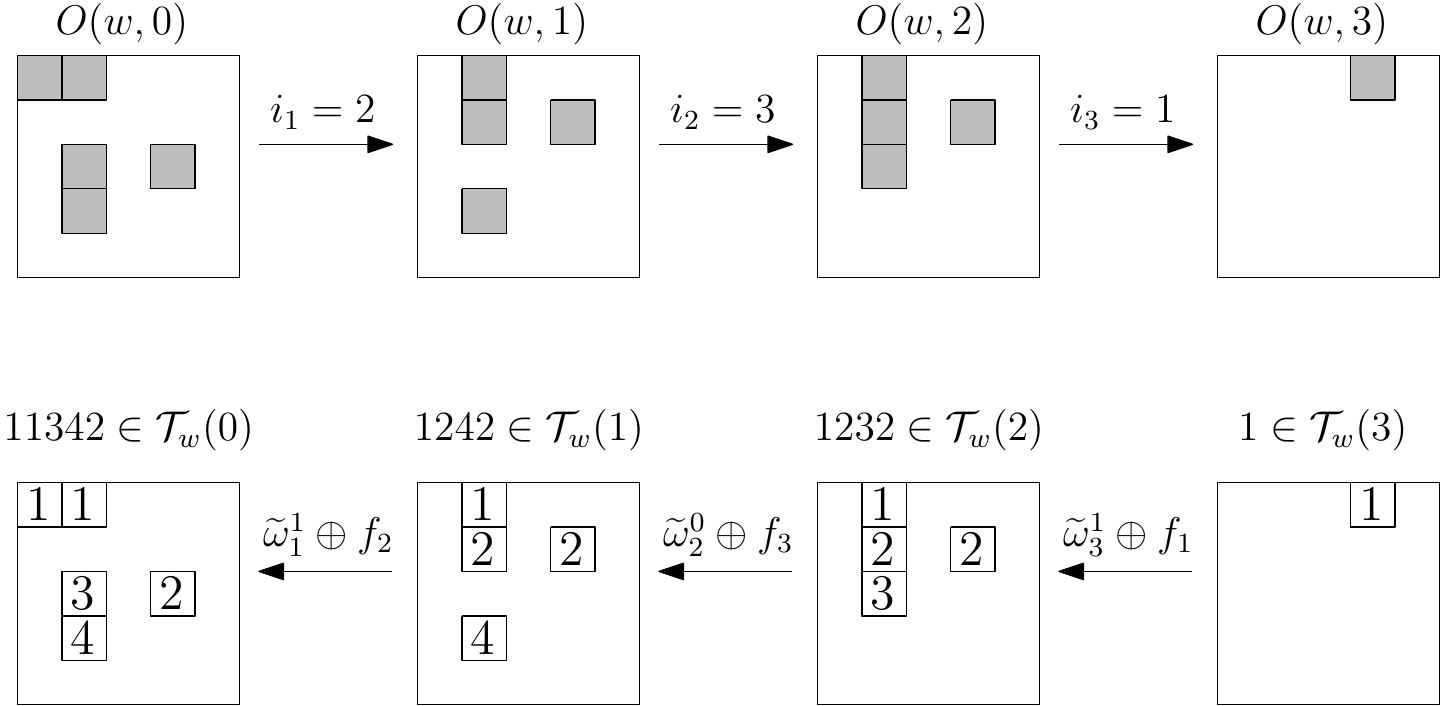}
		\end{center}
	\end{example}
	
	\begin{lemma}
		Let $w$ be a permutation with orthodontic sequence $(\bm{i},\bm{m})$, $\bm{i}=(i_1,\ldots,i_l)$. For each $0\leq r \leq l$, $O(w,r)$ has the northwest property.
	\end{lemma}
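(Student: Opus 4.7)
The plan is to induct on $r$. For the base case $r=0$, $O(w,0)=D(w)$, whose northwest property is noted in the paragraph defining $D(w)$. For the inductive step, assume $O(w,r-1)$ has the northwest property. The passage from $O(w,r-1)$ to $O(w,r)$ proceeds in two moves: (i) replace all columns of the form $[j]$ in $O(w,r-1)$ by empty columns, producing an intermediate diagram $D^{\circ}$; then (ii) swap rows $i_r$ and $i_r+1$ in $D^{\circ}$, with $i_r$ the smallest missing tooth of the leftmost nonempty column of $D^{\circ}$. It suffices to verify that each move preserves the northwest property.

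Move (i) is essentially formal: if $(r_1,c_1),(r_2,c_2)\in D^{\circ}$ with $r_1<r_2$ and $c_2<c_1$, then the column indices $c_1,c_2$ cannot be among those emptied, so both boxes already belong to $O(w,r-1)$; the inductive hypothesis gives $(r_1,c_2)\in O(w,r-1)$, and since column $c_2$ is not emptied, $(r_1,c_2)\in D^{\circ}$. Move (ii) is where the real content lies. Write $p=i_r$, and let $C$ denote the leftmost nonempty column of $D^{\circ}$, with column index $c_0$. Because $p$ is the smallest missing tooth of $C$, the set $C\cap[p]$ equals an initial segment $[a]$ with $a\le p-1$; in particular $p\notin C$, $p+1\in C$, and $\max C\ge p+1$. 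For any column index $c>c_0$, the northwest property of $D^{\circ}$ forces the column at index $c$ to contain, among entries in $[\max C]$, only elements of $C$; in particular $p$ appears in no column of $D^{\circ}$ at index greater than $c_0$. Now take $(r_1,c_1),(r_2,c_2)\in O(w,r)$ with $r_1<r_2$ and $c_2<c_1$, and let $s$ be the involution swapping $p$ and $p+1$, so that $(r,c)$ lies in $O(w,r)$ iff $(s(r),c)$ lies in $D^{\circ}$. We need $(s(r_1),c_2)\in D^{\circ}$, and a case analysis on whether $r_1$ or $r_2$ lies in $\{p,p+1\}$ handles all but one configuration by directly applying the northwest property of $D^{\circ}$ to the pair $(s(r_1),c_1),(s(r_2),c_2)$.

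The delicate case is $r_1=p$, $r_2=p+1$, where $(p+1,c_1),(p,c_2)\in D^{\circ}$ and we must deduce $(p+1,c_2)\in D^{\circ}$; the northwest property of $D^{\circ}$ applied to these two boxes yields only $(p,c_1)\in D^{\circ}$, which is not what we want. Instead, since $p$ appears in no column of $D^{\circ}$ at index above $c_0$ while columns at index below $c_0$ are empty, the assumption $(p,c_2)\in D^{\circ}$ forces $c_2=c_0$, and then $p+1\in C$ delivers $(p+1,c_2)=(p+1,c_0)\in D^{\circ}$. This case, which relies crucially on the minimality of $p$ as a missing tooth combined with $C$ being the leftmost nonempty column, is the main obstacle; all other subcases reduce routinely to the northwest property of $D^{\circ}$.
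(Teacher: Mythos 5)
Your proof is correct. The paper states this lemma without proof, so there is no argument to compare against; your two-step decomposition (emptying columns, then the row swap) and the reduction of the swap step to the single configuration $r_1=i_r$, $r_2=i_r+1$ is a sound and complete verification. One small remark: in your ``delicate case'' the hypothesis $(p,c_2)\in D^{\circ}$ is in fact vacuous, since your own observation shows row $p$ of $D^{\circ}$ is entirely empty ($p\notin C$ because $p$ is a missing tooth of $C$, columns left of $c_0$ are empty, and columns right of $c_0$ cannot contain $p$ by the northwest property applied against $\max C$ in column $c_0$); your argument remains valid as written, but the case could simply be dismissed outright.
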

	
	\begin{definition}
		A filling $T$ of a diagram $D$ is called \emph{row-flagged} if $T(p,q)\leq p$ for each box $(p,q)\in D$.
	\end{definition}
	
	\begin{lemma}
		\label{lem:rowflagged}
		For each $0\leq r\leq l$, the elements of $\mathcal{T}_w(r)$ are row-flagged fillings of $O(w,r)$.
	\end{lemma}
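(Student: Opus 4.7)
The plan is to proceed by downward induction on $r$, from $r = l$ to $r = 0$. This matches the recursion $\mathcal{T}_w(r) = \widetilde{\omega}_{i_r}^{m_r} \oplus \widetilde{\pi}_{i_{r+1}}(\mathcal{T}_w(r+1))$ for $1 \leq r < l$, with base $\mathcal{T}_w(l) = \widetilde{\omega}_{i_l}^{m_l}$ and an additional prefix $\bigoplus_j \widetilde{\omega}_j^{k_j}$ at $r = 0$, together with the structural relationship that $O(w, r+1)$ is obtained from $O(w, r)$ by deleting its $m_r$ columns equal to $[i_r]$ and then swapping the contents of rows $i_{r+1}$ and $i_{r+1}+1$.

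The base case $r = l$ is immediate: $O(w, l)$ consists of $m_l$ copies of $[i_l]$, each filled by $\widetilde{\omega}_{i_l}$ with entry $p$ at row $p$, which is plainly row-flagged. At each inductive step the prepended $\widetilde{\omega}_{i_r}^{m_r}$ (or $\bigoplus_j \widetilde{\omega}_j^{k_j}$ at $r = 0$) fills the newly reinstated $[i_r]$ columns (resp.\ $[j]$ columns) of $O(w, r)$ in the same minimal way, which is row-flagged. So the work is to verify, for $T' \in \mathcal{T}_w(r+1)$ (row-flagged on $O(w, r+1)$ by induction) and $T = f_{i_{r+1}}^k(T')$, that $T$ is row-flagged as a filling of the non-$[i_r]$ portion of $O(w, r)$, which differs from $O(w, r+1)$ only by the swap of rows $i_{r+1}$ and $i_{r+1}+1$. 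For this, I would fix a column $c$ and, writing $\tilde{C}_c$ and $C_c$ for its row sets in $O(w, r+1)$ and the non-$[i_r]$ portion of $O(w, r)$ respectively, follow each entry together with its box through the unswap (the $j$-th box from the top of $\tilde{C}_c$ going to the $j$-th box from the top of $C_c$) and through the potential promotions $i_{r+1} \mapsto i_{r+1}+1$ effected by $f_{i_{r+1}}^k$.

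The main obstacle is excluding two specific configurations in which row-flaggedness could break. The first is a column $c$ of $O(w, r+1)$ with $i_{r+1}+1 \in \tilde{C}_c$ but $i_{r+1} \notin \tilde{C}_c$; undoing the row swap would move its box from row $i_{r+1}+1$ up to row $i_{r+1}$ in $O(w, r)$, where an entry of value $i_{r+1}+1$ would violate row-flaggedness. This is ruled out by the northwest property, noting that in the pre-swap diagram $O(w, r)$ minus its $[i_r]$ columns such a $c$ would satisfy $i_{r+1} \in C_c$ and $i_{r+1}+1 \notin C_c$, while the leftmost nonempty column $c'$ used by orthodontia satisfies $i_{r+1} \notin C_{c'}$ and $i_{r+1}+1 \in C_{c'}$; since $c \neq c'$ and $c$ is nonempty one has $c > c'$, and northwest applied to the boxes $(i_{r+1}+1, c')$ and $(i_{r+1}, c)$ forces $(i_{r+1}, c')$ to be a box, contradicting the missing tooth at $c'$. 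The second configuration is a column $c$ containing both $i_{r+1}$ and $i_{r+1}+1$ whose entry at row $i_{r+1}$ is $i_{r+1}$ and could be promoted by $f_{i_{r+1}}$; here column-strictness and row-flaggedness of $T'$ force the entry at row $i_{r+1}+1$ to be exactly $i_{r+1}+1$, and since those two rows are consecutive in the column the two entries are adjacent in the reading word and are therefore always matched by the parenthesization rule, so $f_{i_{r+1}}$ never promotes this particular $i_{r+1}$. Once these two configurations are excluded, the remaining verification that each entry of $T$ of value $v$ lands in a row of $O(w, r)$ of index at least $v$ reduces to a routine case check.
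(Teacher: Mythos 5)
Your proof is correct and follows essentially the same downward induction as the paper's, tracking each entry and its box through the row swap relating $O(w,r+1)$ to $O(w,r)$. You are in fact somewhat more careful than the paper's terser argument: both of your ``obstacle'' configurations are actually vacuous, because the northwest property forces row $i_{r+1}$ of the pre-swap diagram---hence row $i_{r+1}+1$ of $O(w,r+1)$---to be entirely empty (this is exactly the content of your first exclusion, and it renders the second, while correctly argued via the matched adjacent pair in the reading word, unnecessary).
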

	\begin{proof}
		Clearly, the singleton $\mathcal{T}_w(l)$ is a row-flagged filling of $O(w,l)$. Assume that for some $l\geq s>0$, the result holds with $r=s$. We show that the result also holds with $r=s-1$. Let $T\in\mathcal{T}_w(s)$. We must show that for each $u$, if $f_{i_s}^u(T)$ is defined, then $\widetilde{\omega}_{i_{s-1}}^{m_{s-1}}\oplus f_{i_s}^u(T)$ is a row-flagged filling of $O(w,s-1)$. By the orthodontia construction, $O(w,s)$ is obtained from $O(w,s-1)$ by removing the $m_{s-1}$ columns with no missing tooth, and then switching rows $i_s+1$ and $i_s$. 
		
		Since $T$ is a row-flagged filling of $O(w,s)$, each box in $O(w,s)$ containing an entry of $T$ equal to $i_s$ lies in a row with index at least $i_s$. Any box in $O(w,s)$ containing an entry of $T$ equal to $i_s$ and lying in row $i_s$ of $O(w,s)$ will have row index $i_s+1$ in $O(w,s-1)$. Any box in $O(w,s)$ containing an entry in $T$ equal to $i_s$ and lying in a row $d>i_s$ of $O(w,s)$ will still have row index $d$ in $O(w,s-1)$. Then if $f_{i_s}^u(T)$ is defined, $\widetilde{\omega}_{i_{s-1}}^{m_{s-1}}\oplus f_{i_s}^u(T)$ will be a row-flagged filling of $O(w,s-1)$.
	\end{proof}
	
\section{Zero-one Schubert polynomials}
	\label{sec:suff}
	
	This section is devoted to giving a sufficient condition on the orthodontic sequence $(\bm{i},\bm{m})$ of $w$ for the Schubert polynomial $\mathfrak{S}_w$ to be zero-one. We give such a condition in Theorem~\ref{thm:multfree}. We will see in Theorem~\ref{thm:011} that this condition turns out to also be a necessary condition for $\mathfrak{S}_w$ to be zero-one. 
	
	%\medskip
	
	We start with a less ambitious result:	 
	\begin{proposition}
		\label{prop:norepeats}
		Let $w\in S_n$ and $(\bm{i},\bm{m})$ be the orthodontic sequence of $w$. If $\bm{i}=(i_1,\ldots,i_l)$ has distinct entries, then $\mathfrak{S}_w$ is zero-one.
	\end{proposition}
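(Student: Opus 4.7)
The plan is to work directly with the tableau formula from Theorem~\ref{thm:magyartableauxtheorem}. Since
\[\mathfrak{S}_w=\sum_{T\in\mathcal{T}_w}x_1^{wt(T)_1}\cdots x_n^{wt(T)_n},\]
it suffices to show that distinct tableaux in $\mathcal{T}_w$ have distinct weight vectors.

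First I will parametrize $\mathcal{T}_w$ by non-negative integer sequences. Unwinding Definition~\ref{def:magyartableauxset}, every $T\in\mathcal{T}_w$ arises as
\[T(u_1,\dots,u_l):=\widetilde{\omega}_1^{k_1}\oplus\cdots\oplus\widetilde{\omega}_n^{k_n}\oplus f_{i_1}^{u_1}\bigl(\widetilde{\omega}_{i_1}^{m_1}\oplus f_{i_2}^{u_2}\bigl(\cdots\oplus f_{i_l}^{u_l}(\widetilde{\omega}_{i_l}^{m_l})\cdots\bigr)\bigr)\]
for some $(u_1,\dots,u_l)\in\Z_{\geq 0}^l$ for which each $f_{i_j}^{u_j}$ is defined on its input; this gives a surjection from valid sequences onto $\mathcal{T}_w$.

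Next I exploit the key numerical feature of the root operators: one application of $f_i$ converts exactly one entry equal to $i$ into an entry equal to $i+1$, so $f_{i_j}^{u_j}$ shifts the weight vector by $u_j(e_{i_j+1}-e_{i_j})$, where $e_k$ is the $k$th standard basis vector. Since these shifts compose additively through the nested applications,
\[wt(T(u_1,\dots,u_l))=c+\sum_{j=1}^{l}u_j(e_{i_j+1}-e_{i_j}),\]
where $c:=wt(T(0,\dots,0))$ is a fixed vector depending only on $w$.

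The only substantive step is the linear-algebra observation that, when $i_1,\dots,i_l$ are pairwise distinct, the vectors $e_{i_j+1}-e_{i_j}$ for $j=1,\dots,l$ are linearly independent: they are the edge-difference vectors of the distinct edges $\{i_j,i_j+1\}$ in the path graph on $\{1,\dots,n+1\}$, and edge-difference vectors of any forest are linearly independent.

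Combining the pieces: if $T_1,T_2\in\mathcal{T}_w$ satisfy $wt(T_1)=wt(T_2)$, pick representing sequences $u^{(1)},u^{(2)}$. Equating weights gives $\sum_j(u^{(1)}_j-u^{(2)}_j)(e_{i_j+1}-e_{i_j})=0$, forcing $u^{(1)}=u^{(2)}$ by independence, and hence $T_1=T(u^{(1)})=T(u^{(2)})=T_2$. Therefore $T\mapsto wt(T)$ is injective on $\mathcal{T}_w$, so every coefficient of $\mathfrak{S}_w$ is $0$ or $1$. I do not anticipate any technical obstacle; the proposition reduces to the essentially elementary fact that distinct $i_j$ give distinct edges of a path, hence independent edge-difference vectors.
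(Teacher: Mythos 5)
Your proposal is correct and follows essentially the same route as the paper: both express each tableau via an exponent sequence for the root operators, write the weight as a fixed vector plus $\sum_j u_j(e_{i_j+1}-e_{i_j})$, and invoke linear independence of these vectors when the $i_j$ are distinct to force equality of the sequences and hence of the tableaux. Your explicit justification of the independence (distinct edges of a path) is a small addition the paper leaves implicit.
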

	\begin{proof}
		Let $T,T'\in\mathcal{T}_w$ with $wt(T)=wt(T')$. By Definition~\ref{def:magyartableauxset}, we can find $p_1,\ldots,p_l$ so that 
		\[T=\widetilde{\omega}_1^{k_1}\oplus\cdots\oplus \widetilde{\omega}_n^{k_n}\oplus f_{i_1}^{p_1}(\widetilde{\omega}_{i_1}^{m_1}\oplus\cdots\oplus f_{i_l}^{p_l}(\widetilde{\omega}_{i_l}^{m_l})\cdots). \]
		Then if $e_1,\ldots,e_n$ denote the standard basis vectors of $\mathbb{R}^n$,
		\[wt(T) = \sum_{j=1}^{n}wt(\widetilde{\omega}_{j}^{k_j}) + \sum_{j=1}^{l}wt(\widetilde{\omega}_{i_j}^{m_j})+\sum_{j=1}^{l}p_j(e_{i_j+1}-e_{i_j}). \]
		Similarly, we can find $q_1,\ldots,q_l$ so that 
		\[T'=\widetilde{\omega}_1^{k_1}\oplus\cdots\oplus \widetilde{\omega}_n^{k_n}\oplus f_{i_1}^{q_1}(\widetilde{\omega}_{i_1}^{m_1}\oplus\cdots\oplus f_{i_l}^{q_l}(\widetilde{\omega}_{i_l}^{m_l})\cdots), \]
		which implies
		\[wt(T') = \sum_{j=1}^{n}wt(\widetilde{\omega}_{j}^{k_j}) + \sum_{j=1}^{l}wt(\widetilde{\omega}_{i_j}^{m_j})+\sum_{j=1}^{l}q_j(e_{i_j+1}-e_{i_j}). \]
		As $wt(T)=wt(T')$,
		\begin{align}
			\label{eqn:vectorexpansion}
			0=wt(T)-wt(T')=(p_1-q_1)(e_{i_1+1}-e_{i_1})+\cdots+(p_l-q_l)(e_{i_l+1}-e_{i_l})\tag{$*$}.
		\end{align}
		Since the vectors $\{e_{i_j+1}-e_{i_j}\}_{j=1}^{l}$ are independent and $\bm{i}$ has distinct entries, $p_j=q_j$ for all $j$. Thus $T=T'$. This shows that all elements of $\mathcal{T}_w$ have distinct weights, so $\mathfrak{S}_w$ is zero-one.
	\end{proof}
	
	We now strengthen Proposition~\ref{prop:norepeats} to allow $\bm{i}$ to not have distinct entries. To do this, we will need a technical definition related to the orthodontic sequence. Recall the construction of the orthodontic sequence $(\bm{i},\bm{m})$ of a permutation $w\in S_n$ (Definition~\ref{def:imsequence}) and the intermediate diagrams $O(w,r)$ (Definition~\ref{def:partialdiagram}). 
	Let $\bm{i}=(i_1,\ldots,i_l)$, and define $O(w,r)_-$ to be the diagram $O(w,r)$ with all columns of the form $[i_{r}]$ replaced by empty columns.
		
	\begin{definition}
		Define the \emph{orthodontic impact function} $\mathcal{I}_w:[l]\to 2^{[n]}$ by  
		\[\mathcal{I}_w(j)=\{c\in [n] \mid (i_j+1,c)\in O(w,j-1)_-\}. \]
	\end{definition}
	\noindent That is, $\mathcal{I}_w(j)$ is the set of indices of columns of $O(w,j-1)_-$ that are changed when rows $i_j$ and $i_j+1$ are swapped to form $O(w,j)$. 
	
	\begin{definition}
		Let $w\in S_n$ have orthodontic sequence $(\bm{i},\bm{m})$, $\bm{i}=(i_1,\ldots,i_l)$. We say $w$ is \emph{multiplicity-free} if for any $r,s\in [l]$ with $r\neq s$ and $i_r=i_s$, we have $\mathcal{I}_w(r)=\mathcal{I}_w(s)=\{c\}$ for some $c\in [n]$.
	\end{definition}

	\begin{example}
		If $w=457812693$, then 
		\begin{center}
			\begin{tikzpicture}[scale=.5]
				\draw (0,0)--(9,0)--(9,9)--(0,9)--(0,0);
				%		\draw (9,8.5) -- (3.5,8.5) node {$\bullet$} -- (3.5,0);
				%		\draw (9,7.5) -- (4.5,7.5) node {$\bullet$} -- (4.5,0);
				%		\draw (9,6.5) -- (6.5,6.5) node {$\bullet$} -- (6.5,0);
				%		\draw (9,5.5) -- (7.5,5.5) node {$\bullet$} -- (7.5,0);
				%		\draw (9,4.5) -- (0.5,4.5) node {$\bullet$} -- (0.5,0);
				%		\draw (9,3.5) -- (1.5,3.5) node {$\bullet$} -- (1.5,0);
				%		\draw (9,2.5) -- (5.5,2.5) node {$\bullet$} -- (5.5,0);
				%		\draw (9,1.5) -- (8.5,1.5) node {$\bullet$} -- (8.5,0);
				%		\draw (9,0.5) -- (2.5,0.5) node {$\bullet$} -- (2.5,0);	
				\filldraw[draw=black,fill=lightgray] (0,8)--(1,8)--(1,9)--(0,9)--(0,8);
				\filldraw[draw=black,fill=lightgray] (1,8)--(2,8)--(2,9)--(1,9)--(1,8);
				\filldraw[draw=black,fill=lightgray] (2,8)--(3,8)--(3,9)--(2,9)--(2,8);
				\filldraw[draw=black,fill=lightgray] (0,7)--(1,7)--(1,8)--(0,8)--(0,7);
				\filldraw[draw=black,fill=lightgray] (1,7)--(2,7)--(2,8)--(1,8)--(1,7);
				\filldraw[draw=black,fill=lightgray] (2,7)--(3,7)--(3,8)--(2,8)--(2,7);
				\filldraw[draw=black,fill=lightgray] (0,6)--(1,6)--(1,7)--(0,7)--(0,6);
				\filldraw[draw=black,fill=lightgray] (1,6)--(2,6)--(2,7)--(1,7)--(1,6);
				\filldraw[draw=black,fill=lightgray] (2,6)--(3,6)--(3,7)--(2,7)--(2,6);
				\filldraw[draw=black,fill=lightgray] (0,5)--(1,5)--(1,6)--(0,6)--(0,5);
				\filldraw[draw=black,fill=lightgray] (1,5)--(2,5)--(2,6)--(1,6)--(1,5);
				\filldraw[draw=black,fill=lightgray] (2,5)--(3,5)--(3,6)--(2,6)--(2,5);
				\filldraw[draw=black,fill=lightgray] (5,6)--(6,6)--(6,7)--(5,7)--(5,6);
				\filldraw[draw=black,fill=lightgray] (5,5)--(6,5)--(6,6)--(5,6)--(5,5);
				\filldraw[draw=black,fill=lightgray] (2,2)--(3,2)--(3,3)--(2,3)--(2,2);
				\filldraw[draw=black,fill=lightgray] (2,1)--(3,1)--(3,2)--(2,2)--(2,1);
				\node at (-1.5,4.5) {$D(w)=$};
				\node at (13.5,4.5) {and $\bm{i}=(6,5,7,6,2,1,3,2).$};
			\end{tikzpicture}
		\end{center}
		The only entries of $\bm{i}$ occurring multiple times are $i_1=i_4=6$ and $i_5=i_8=2$. Their respective impacts are $\mathcal{I}_w(1)=\mathcal{I}_w(4)=\{3\}$ and $\mathcal{I}_w(5)=\mathcal{I}_w(8)=\{6\}$, so $w$ is multiplicity-free.
	\end{example}

	The proof of the generalization of Proposition~\ref{prop:norepeats} will require the following technical lemma. Before proceeding, recall Lemma~\ref{lem:fillings} and Lemma~\ref{lem:rowflagged}: for every $0\leq j\leq l$, elements of $\mathcal{T}_w(j)$ can be viewed as row-flagged, column-strict fillings of $O(w,j)$ (via the column filling order of $O(w,j)$ specified prior to Lemma~\ref{lem:fillings}). Applying $\widetilde{\omega}_{i_{j-1}}^{m_{j-1}}\oplus f_{i_j}$ to an element of $\mathcal{T}_w(j)$ gives an element of $\mathcal{T}_w(j-1)$, a filling of $O(w,j-1)$. Thus, when we speak below of the application of $f_{i_j}$ to an element $T\in\mathcal{T}_w(j)$ ``changing an $i_j$ to an $i_j+1$ in column $c$'', we specifically mean that when we view $T$ as a filling of $O(w,j)$ and $\widetilde{\omega}_{i_{j-1}}^{m_{j-1}}\oplus f_{i_j}(T)$ as a filling of $O(w,j-1)$, $T$ and $\widetilde{\omega}_{i_{j-1}}^{m_{j-1}}\oplus f_{i_j}(T)$ differ (in the stated way) in their entries in column $c$.
	\begin{lemma}
		\label{lem:rootoperatorproperty}
		Let $w$ be a multiplicity-free permutation with orthodontic sequence $(\bm{i},\bm{m})$, $\bm{i}=(i_1,\ldots,i_l)$. Suppose $i_r=i_s$ with $r<s$ and $\mathcal{I}_w(r)=\mathcal{I}_w(s)=\{c\}$.
		Then for each $j$ with $r\leq j \leq s$, $\mathcal{I}_w(j)=\{c\}$ and the application of $f_{i_j}$ to an element of 
		$\mathcal{T}_w(j)$
		is either undefined or changes an $i_j$ to an $i_{j}+1$ in column $c$.
	\end{lemma}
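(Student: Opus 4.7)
The plan is to prove both conclusions by forward induction on $j\in[r,s]$, maintaining the invariants that column $c$ is the first nonempty column of $O(w,j-1)_-$, that columns $c''>c$ are unchanged from $O(w,r-1)_-$ with $C_{c''}\subseteq\{1,\dots,a\}$ for a certain threshold $a$, and that every intermediate missing tooth $i_j$ exceeds $a$. The threshold $a$ is defined from $O(w,r-1)_-$: since $i_r$ is the smallest missing tooth of $C_c$, the intersection $C_c\cap\{1,\dots,i_r-1\}$ is an initial segment $\{1,\dots,a\}$. Using the northwest property of $O(w,r-1)_-$ together with $(i_r+1,c)\in O(w,r-1)_-$ and $(i_r,c)\notin O(w,r-1)_-$, I would check that every column $c''>c$ satisfies $C_{c''}\subseteq\{1,\dots,a\}$: a box at row $b\ge i_r+1$ in column $c''$ forces $(i_r+1,c'')\in O(w,r-1)_-$ via northwest and contradicts $\mathcal{I}_w(r)=\{c\}$; a box at row $b=i_r$ forces $(i_r,c)\in O(w,r-1)_-$, contradicting the missing-tooth property; and a box at row $b\le i_r-1$ forces $(b,c)\in O(w,r-1)_-$, so $b\le a$.

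To run the induction, I would first rule out column $c$ becoming $[i_j]$ at any $j\in[r,s-1]$: were it to, column $c$ would be emptied in $O(w,j)_-$ and the orthodontia would permanently proceed to a column strictly right of $c$, contradicting $\mathcal{I}_w(s)=\{c\}$. Hence $c$ remains the first nonempty column throughout. Since each swap at step $j'<j$ only affects columns of $\mathcal{I}_w(j')=\{c\}$ by the inductive hypothesis (with $\mathcal{I}_w(r)=\{c\}$ as the base), the columns $c''>c$ remain unchanged through $O(w,j-1)_-$, preserving $C_{c''}\subseteq\{1,\dots,a\}$. At the same time, each such swap acts at a missing tooth of $C_c$, which cannot lie in $\{1,\dots,a\}$ since those rows stay occupied; hence every $i_j$ for $j\in[r,s]$ satisfies $i_j>a$. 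It follows that $(i_j+1,c'')\notin O(w,j-1)_-$ for $c''>c$, so $\mathcal{I}_w(j)=\{c\}$.

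Once the impact claim is established, the tableau part is direct. In $O(w,j)$ the swap places the box $(i_j,c)$ in row $i_j$, and the invariant shows columns $c''>c$ still satisfy $C_{c''}\subseteq\{1,\dots,a\}\subseteq\{1,\dots,i_j-1\}$, so they carry no box in any row $\ge i_j$, while columns $c''<c$ are empty. Since $T\in\mathcal{T}_w(j)$ is row-flagged by Lemma~\ref{lem:rowflagged}, any entry equal to $i_j$ or $i_j+1$ must sit in a row $\ge i_j$ or $\ge i_j+1$ and therefore lies in column $c$; column-strictness permits at most one of each, with the $i_j$ above the $i_j+1$. Parenthesis matching on the reading word thus either pairs them off, rendering $f_{i_j}$ undefined, or leaves a lone $i_j$ in column $c$ that $f_{i_j}$ promotes to $i_j+1$.

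The main obstacle is the joint structural analysis in the inductive step: one must invoke the seemingly remote hypothesis $\mathcal{I}_w(s)=\{c\}$ to prevent column $c$ from being discarded early, then combine this with the northwest property to keep the initial segment $\{1,\dots,a\}$ frozen inside $C_c$ and uniformly bounding $C_{c''}$ for every $c''>c$. Once these invariants are in force, the conclusion for $f_{i_j}$ follows readily from row-flaggedness and column-strictness.
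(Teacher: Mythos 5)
Your overall strategy is the same as the paper's: show that in each intermediate diagram $O(w,j-1)$ the only column carrying boxes in rows $\geq i_j$ is column $c$, then combine row-flaggedness (Lemma~\ref{lem:rowflagged}) with column-strictness and the reading order to pin down the action of $f_{i_j}$. Your packaging as an induction with the explicit threshold $a$ (the paper's $u-1$) is fine, and your uses of $\mathcal{I}_w(s)=\{c\}$ to keep column $c$ alive, and of northwest to handle boxes of columns $c''>c$ in rows $b=i_r$ and $b\leq i_r-1$, are correct.

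However, there is a genuine gap in the base case of your invariant, in the sub-case $b>i_r+1$. You claim that a box $(b,c'')\in O(w,r-1)_-$ with $b\ge i_r+1$ and $c''>c$ "forces $(i_r+1,c'')\in O(w,r-1)_-$ via northwest." For $b=i_r+1$ this is vacuous, but for $b>i_r+1$ the northwest property gives nothing: the two boxes you have, $(i_r+1,c)$ and $(b,c'')$, sit in northwest/southeast position, whereas the northwest property only produces a conclusion from a northeast/southwest pair. A diagram with column $c=\{1,4,5\}$ and column $c''=\{1,5\}$ satisfies the northwest property and has $\mathcal{I}$ equal to $\{c\}$ at the first occurrence of the tooth, yet carries a box at $(5,c'')$ with $5>i_r+1=4$; so no purely local argument from northwest and $\mathcal{I}_w(r)=\{c\}$ can exclude such boxes. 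This is exactly where the paper has to invoke the global multiplicity-freeness of $w$: a box $(d,c'')$ with $d>i_r+1$ and $c''>c$ would force, when the orthodontia later lifts that box past rows $i_r+1$ and $i_r$, a step $t$ with $i_t=i_r$ and $c''\in\mathcal{I}_w(t)$, contradicting $\mathcal{I}_w(r)=\{c\}$ and multiplicity-freeness. Without closing this case, your invariant $C_{c''}\subseteq\{1,\dots,a\}$ is not established, and a deep box in a column right of $c$ could legitimately hold an entry equal to $i_j$ (row-flaggedness does not forbid small entries in low rows), so $f_{i_j}$ could act outside column $c$ and the conclusion would fail. You need to add the multiplicity-freeness argument (or an equivalent) for this sub-case; the remaining two sub-cases and the rest of your induction then go through.
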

	\begin{proof}
		We handle first the case that $j=r$. In the diagram $O(w,r-1)$, column $c$ is the leftmost column containing a missing tooth, and $i_r$ is the smallest missing tooth in column $c$. Reading column $c$ of $O(w,r-1)$ top-to-bottom, one sees a (possibly empty) sequence of boxes in $O(w,r-1)$, followed by a sequence of boxes not in $O(w,r-1)$. The sequence of boxes not in $O(w,r-1)$ has length at least two since $i_r$ occurs at least twice in $\bm{i}$, and terminates with the box $(i_r+1,c)\in O(w,r-1)$. Note that since $(i_r-1,c),(i_r,c)\notin O(w,r-1)$,  the northwest property of $O(w,r-1)$ implies that there can be no box $(i_r-1,c')$ or $(i_r,c')$ in $O(w,r-1)$ with $c'>c$. 
		Note also that since $\mathcal{I}_w(r)=\{c\}$, we have $(i_r+1,c')\notin O(w,r-1)$ for each $c'>c$. 
		Lastly, observe that for any $c'>c$ and $d>i_r+1$, there can be no box $(d,c')\in O(w,r-1)$. Otherwise there would be some $t\in [l]$ with $i_t=i_r$ and $t\neq r$ such that $c'\in \mathcal{I}_w(t)$, violating that $w$ is multiplicity-free. 
		
		As a consequence of the previous observations, the largest row index that a column $c'>c$ of $O(w,r-1)$ can contain a box in is $i_r-2$. In particular, Lemma~\ref{lem:rowflagged} implies that the application of $f_{i_r}$ to an element of $\mathcal{T}_w(r)$ either is undefined or changes an $i_r$ to an $i_r+1$ in column $c$. This concludes the case that $j=r$. 
		
		When $j=s$, an entirely analogous argument works. The only significant difference in the observations is that when column $c$ of $O(w,s-1)$ is read top-to-bottom, the (possibly empty) initial sequence of boxes in $O(w,s-1)$ is followed by a sequence of boxes not in $O(w,s-1)$ with length at least 1, ending with the box $(i_s+1,c)$. Consequently, the largest row index that a column $c'>c$ of $O(w,s-1)$ can contain a box in is $i_s-1$. In particular, Lemma~\ref{lem:rowflagged} implies that the application of $f_{i_s}$ to an element of $\mathcal{T}_w(s)$ either is undefined or changes an $i_s$ to an $i_s+1$ in column $c$. This concludes the case that $j=s$. 
		
		Now, let $r<j<s$. Since $\mathcal{I}_w(r)=\mathcal{I}_w(s)=\{c\}$, we have $c\in \mathcal{I}_w(j)$. 
		If $i_j$ occurs multiple times in $\bm{i}$, then multiplicity-freeness of $w$ implies $\mathcal{I}_w(j)=\{c\}$. In this case, we can find $j'\neq j$ with $i_j=i_{j'}$ and apply the previous argument (with $r$ and $s$ replaced by $j$ and $j'$) to conclude that the application of $f_{i_j}$ to an element of $\mathcal{T}_w(j)$ is either undefined or changes an $i_j$ to an $i_j+1$ in column $c$.
		
		Thus, we assume $i_j$ occurs only once in $\bm{i}$. Recall that it was shown above that $O(w,r-1)$ has no boxes $(d,c')$ with $d>i_r$ and $c'>c$. Read top-to-bottom, let column $c$ of $O(w,r-1)$ have a (possibly empty) initial sequence of boxes ending with a missing box in row $u$, so clearly $u\leq i_r-1$. Since the first missing tooth in column $c$ of $O(w,r-1)$ is in row $i_r$, none of the boxes $(u,c),(u+1,c),\ldots,(i_r,c)$ are in $O(w,r-1)$, but $(i_r+1,c)\in O(w,r-1)$. Then, the northwest property implies that there is no box in $O(w,r-1)$ in any column $c'>c$ in any of rows $u,u+1,\ldots,i_r$. In particular, the largest row index such that a column $c'>c$ of $O(w,r-1)$ can contain a box in is $u-1$. 
		
		As $r<j<s$ and $\mathcal{I}_w(r)=\mathcal{I}_w(s)=\{c\}$, we have that $c\in\mathcal{I}_w(j)$. 
		Also since $r<j<s$, the leftmost nonempty column in $O(w,j-1)$ is column $c$, and $i_j\geq u$. Then in $O(w,j-1)$, the maximum row index a box in a column $c'>c$ can have is $u-1$. In particular, $\mathcal{I}_w(j)=\{c\}$, and Lemma~\ref{lem:rowflagged} implies that the application of $f_{i_j}$ to an element of $\mathcal{T}_w(j)$ is either undefined or changes an $i_j$ to an $i_j+1$ in column $c$.
	\end{proof}

	\begin{theorem}
		\label{thm:multfree}
		If $w$ is multiplicity-free, then $\mathfrak{S}_w$ is zero-one.
	\end{theorem}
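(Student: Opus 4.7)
The plan is to extend the argument from Proposition~\ref{prop:norepeats}. Suppose $T,T'\in\mathcal{T}_w$ satisfy $wt(T)=wt(T')$, and write
\[
T = \widetilde{\omega}_1^{k_1}\oplus\cdots\oplus\widetilde{\omega}_n^{k_n}\oplus f_{i_1}^{p_1}(\widetilde{\omega}_{i_1}^{m_1}\oplus\cdots\oplus f_{i_l}^{p_l}(\widetilde{\omega}_{i_l}^{m_l})\cdots),
\]
and similarly for $T'$ with exponents $(q_j)$ in place of $(p_j)$. As in the proof of Proposition~\ref{prop:norepeats}, the weight equation reduces to
\[
\sum_{j=1}^l (p_j-q_j)(e_{i_j+1}-e_{i_j})=0.
\]
Grouping the terms by the distinct values of $i_j$ and using linear independence of the simple roots $\{e_{i+1}-e_i\}$, I obtain $\sum_{j:\, i_j=i}(p_j-q_j)=0$ for each distinct $i$ appearing in $\bm{i}$. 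When every $i_j$ is distinct, this already gives $p_j=q_j$ and we conclude $T=T'$ exactly as before.

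The remaining work is to handle each value $i$ appearing multiple times in $\bm{i}$, say at positions $j_1<\cdots<j_k$ with $k\geq 2$. By the definition of multiplicity-freeness together with Lemma~\ref{lem:rootoperatorproperty}, there is a single column $c=c(i)$ such that $\mathcal{I}_w(j)=\{c\}$ for \emph{every} $j\in[j_1,j_k]$, and each application of $f_{i_j}$ for $j\in[j_1,j_k]$ modifies only column $c$. The idea is to process these applications from the innermost block ($j=l$) outward and to track the content of column $c$ after each operator $f_{i_j}^{p_j}$. Since all these operators act inside the single column $c$ whose entries remain column-strict and row-flagged (Lemma~\ref{lem:rowflagged}), the state of column $c$ after $f_{i_{j_m}}^{p_{j_m}}$ is forced by the state immediately before, the value $i$, and the partial count $p_{j_m}$. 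In particular, by comparing $T$ and $T'$ one block at a time starting from the outermost operator whose application differs, I will show that the first place where $p_{j_m}\neq q_{j_m}$ yields two distinct columns $c$ in $T$ and $T'$, contradicting column-strictness together with the fact that all later operators act identically.

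The main obstacle is the interleaving: between positions $j_m$ and $j_{m+1}$ there may be other positions $j$ with $i_j\neq i$ but still $\mathcal{I}_w(j)=\{c\}$, so the intermediate content of column $c$ is shaped by a mixture of operators of different index values. The delicate point will be to verify that the configuration of column $c$ at each stage, together with the fixed global sum $\sum_m p_{j_m}=\sum_m q_{j_m}$, is rigid enough to pin down each individual $p_{j_m}$; this will use that the only entries of column $c$ eligible for the operator $f_{i_{j_m}}$ are those equal to $i$, while the ambient operators for different index values touch rows outside $\{i,i+1\}$ and hence commute with $f_i$ in their action on column $c$.
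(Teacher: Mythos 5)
Your setup is the same as the paper's (extend Proposition~\ref{prop:norepeats}, reduce to the case of repeated letters in $\bm{i}$, invoke Lemma~\ref{lem:rootoperatorproperty} and column-strictness), and you correctly identify the interleaving of operators acting on the same column as the crux. But the mechanism you propose to resolve it has a genuine gap, in two places. First, the claim that ``the ambient operators for different index values touch rows outside $\{i,i+1\}$ and hence commute with $f_i$ in their action on column $c$'' is false for adjacent indices: $f_{i-1}$ turns entries $i-1$ into entries $i$, and $f_{i+1}$ turns entries $i+1$ into entries $i+2$, so both change the set of letters on which $f_i$'s bracketing rule operates, and root operators of adjacent index do not commute. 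Since the positions interleaved between $j_1$ and $j_k$ typically do carry adjacent indices (that is how orthodontia eats a column from the bottom up), this step would fail. Second, the proposed contradiction is not one: at the innermost position where $p_{j_m}\neq q_{j_m}$ you indeed get two distinct intermediate tableaux, but with \emph{different} weights, and nothing is contradicted there, because an outer position $j_{m'}$ with the same letter can compensate (this is exactly what $\sum_m p_{j_m}=\sum_m q_{j_m}$ permits). The rigidity you hope for---that the column content pins down each individual $p_{j_m}$---is precisely the part that is announced but never executed, and it is the hard part.

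The paper resolves this differently, without ever pinning down individual exponents. Writing $s$ for the largest index with $p_s\neq q_s$ and $\mathcal{I}_w(s)=\{c\}$, it constructs an interval $[r,s]$ satisfying two combinatorial conditions: (i) every letter occurring multiply in $\bm{i}$ has all its occurrences on the same side of $r$, which together with the linear-independence computation forces $wt(T_r)=wt(T_r')$ at the checkpoint $r$; and (ii) every index strictly between $r$ and $s$ is sandwiched between two equal letters, which lets Lemma~\ref{lem:rootoperatorproperty} apply to \emph{all} the interleaved operators at once and guarantees they only touch column $c$. Then $T_r$ and $T_r'$ agree outside column $c$ (no commutation needed---the operators never leave column $c$), have equal weight, hence have columns $c$ of equal weight, and a column-strict column is determined by its weight; so $T_r=T_r'$, and one iterates outward. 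To repair your proof you would need to import this interval construction (or an equivalent device) rather than attempt to track individual exponents through non-commuting operators.
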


	\begin{proof}
		Assume $wt(T)=wt(T')$ for some $T,T'\in \mathcal{T}_w$. If we can show that $T=T'$, then we can conclude that all elements of $\mathcal{T}_w$ have distinct weights, so $\mathfrak{S}_w$ is zero-one. To begin, write 
		\[T=\widetilde{\omega}_1^{k_1}\oplus\cdots\oplus \widetilde{\omega}_n^{k_n}\oplus f_{i_1}^{p_1}(\widetilde{\omega}_{i_1}^{m_1}\oplus\cdots\oplus f_{i_l}^{p_l}(\widetilde{\omega}_{i_l}^{m_l})\cdots) \]
		and
		\[T'=\widetilde{\omega}_1^{k_1}\oplus\cdots\oplus \widetilde{\omega}_n^{k_n}\oplus f_{i_1}^{q_1}(\widetilde{\omega}_{i_1}^{m_1}\oplus\cdots\oplus f_{i_l}^{q_l}(\widetilde{\omega}_{i_l}^{m_l})\cdots), \]
		for some $p_1,\ldots,p_l,q_1,\ldots,q_l$. The basic idea of the proof is to show that as $T$ and $T'$ are constructed step-by-step from $\widetilde{\omega}_{i_l}^{m_l}$, the resulting intermediate tableaux are intermittently equal. At termination of the construction, this will imply that $T=T'$. 
		
		By the expansion (\ref{eqn:vectorexpansion}) of $wt(T)-wt(T')$ used in the proof of Proposition~\ref{prop:norepeats}, we observe that $p_u=q_u$ for all $u$ such that $i_u$ occurs only once in $\bm{i}$. Let $s$ be the largest index such that $p_s\neq q_s$. Suppose $\mathcal{I}_w(s)=\{c\}$. 
		Let $r_1$ be the smallest index such that $i_{r_1}$ occurs multiple times in $\bm{i}$ and $\mathcal{I}_w(r_1)=\{c\}$.
		We know $r_1<s$, because (\ref{eqn:vectorexpansion}) implies that $p_{s'}\neq q_{s'}$ for another $s'<s$ with $i_{s'}=i_s$, and by multiplicity-freeness $\mathcal{I}_w(s')=\{c\}$.
		We wish to find an interval $[r,s]\subseteq [r_1,s]$ such that $r<s$ and the following two conditions hold:
		\begin{itemize}
			\item[(i)] \label{itm:i} If $v\geq r$ and $i_v$ occurs multiple times in $\bm{i}$, then any other $v'$ with $i_v=i_{v'}$ will satisfy $v'\geq r$.
			\item[(ii)] \label{itm:ii} For every $j$ with $r<j<s$ and $i_j$ occurring only once in $\bm{i}$, there are $t$ and $u$ with $r\leq t<j<u\leq s$ such that $i_t=i_u$.
		\end{itemize}
			
		We first show that (i) holds for $[r_1,s]$. Note that if $i_v$ occurs multiple times in $\bm{i}$ and $r_1\leq v\leq s$, then it must be that $\mathcal{I}_w(v)=\{c\}$		
		by the fact that the orthodontia construction records all missing teeth needed to eliminate one column before moving on to the next column.
		If $i_{v'}=i_{v}$, then $\mathcal{I}_w(v')=\{c\}$ also, by multiplicity-freeness of $w$. The choice of $r_1$ implies $r_1\leq v'$. 
		If $i_v$ occurs multiple times in $\bm{i}$ with $s<v$ and $\mathcal{I}_w(v)=\{c\}$, then the choice of $r_1$ again implies that $r_1\leq v'$ for any $i_{v'}=i_v$. 
		If $i_v$ occurs multiple times in $\bm{i}$ with $s<v$ and $\mathcal{I}_w(v)\neq \{c\}$, then the orthodontia construction implies that 
		any $v'$ with $i_v=i_{v'}$ must satisfy $s<v'$. 
		In particular, $r_1<v'$ as needed. Thus, (i) holds for $[r_1,s]$. If $[r_1,s]$ also satisfies (ii), then we are done.
		
		Otherwise, assume $[r_1,s]$ does not satisfy (ii). Then there is some $j$ with $r_1<j<s$ such that $i_j$ occurs only once in $\bm{i}$ and
		there are no $t$ and $u$ with $r_1\leq t<j<u\leq s$ and $i_t=i_u$. Consequently for every pair $i_u=i_t$ with $r_1\leq t<u\leq s$, it must be that either $t<u<j$ or $j<t<u$. Let $r_2$ be the smallest index such that $j<r_2$ and $i_{r_2}$ occurs multiple times in $\bm{i}$. By the choice of $j$, it is clear that the interval $[r_2,s]$ still satisfies (i). If $[r_2,s]$ also satisfies (ii), then we are done. 

		Otherwise, $[r_2,s]$ satisfies (i) but not (ii), and we can argue exactly as in the case of $[r_1,s]$ to find an $r_3$ such that $r_2<r_3<s$ and $[r_3,s]$ satisfies (i). 
		Continue working in this fashion. We show that this process terminates with an interval $[r,s]$ satisfying $r<s$, (i), and (ii). 

		As mentioned above, there exists $s'<s$ such that $i_{s'}=i_s$. Let $s'$ be the maximal index less than $s$ with this property. 
		Since all of the intervals $[r_*,s]$ will satisfy (i), it follows that $r_1<r_2<\cdots\leq s'$. At worst, the  process will terminate after finitely many steps with the interval $[s',s]$. The interval $[s',s]$ will then satisfy (i) since the process reached it, and will trivially satisfy (ii) since $i_s=i_{s'}$.		
		
		Hence, we can assume that we have found an interval $[r,s]$ satisfying $r<s$, (i), and (ii). Consider the tableaux
		\begin{align*}
		&T_r=\widetilde{\omega}_{i_{r-1}}^{m_{r-1}}\oplus f_{i_r}^{p_r}(\widetilde{\omega}_{i_r}^{m_r}\oplus\cdots\oplus f_{i_l}^{p_l}(\widetilde{\omega}_{i_l}^{m_l})\cdots),\quad &T_s=\widetilde{\omega}_{i_{s}}^{m_{s}}\oplus f_{i_{s+1}}^{p_{s+1}}(\widetilde{\omega}_{i_{s+1}}^{m_{s+1}}\oplus\cdots\oplus f_{i_l}^{p_l}(\widetilde{\omega}_{i_l}^{m_l})\cdots),\\
		&T'_r=\widetilde{\omega}_{i_{r-1}}^{m_{r-1}}\oplus f_{i_r}^{q_r}(\widetilde{\omega}_{i_r}^{m_r}\oplus\cdots\oplus f_{i_l}^{q_l}(\widetilde{\omega}_{i_l}^{m_l})\cdots),\quad &T'_s=\widetilde{\omega}_{i_{s}}^{m_{s}}\oplus f_{i_{s+1}}^{q_{s+1}}(\widetilde{\omega}_{i_{s+1}}^{m_{s+1}}\oplus\cdots\oplus f_{i_l}^{q_l}(\widetilde{\omega}_{i_l}^{m_l})\cdots).
		\end{align*}
		
		By definition, $T_r,T_r'\in\mathcal{T}_w(r-1)$, so we can view $T_r$ and $T_r'$ as fillings of $O(w,r-1)$. Similarly, $T_s,T_s'\in\mathcal{T}_w(s)$, so we can view $T_s$ and $T_s'$ as fillings of $O(w,s)$. Since we chose $s$ to be the largest index such that $p_s\neq q_s$, it follows that $T_s=T'_s$. By property (i) of $[r,s]$, $i_u\neq i_v$ for any $u<r\leq v$. Hence, it must be that $wt(T_r)=wt(T'_r)$. Finally, property (ii) of $[r,s]$ allows us to apply Lemma~\ref{lem:rootoperatorproperty} and conclude that for any $a_r,a_{r+1},\ldots,a_{s}\geq 0$, 
		when $\widetilde{\omega}_{i_{r-1}}^{m_{r-1}}\oplus f_{i_r}^{a_r}(\widetilde{\omega}_{m_r}^{i_r}\oplus\cdots\oplus \widetilde{\omega}_{i_{s-1}}^{m_{s-1}}f_{i_s}^{a_s}(\mbox{---})\cdots)$ 
		is applied to an element of $\mathcal{T}_w(s)$, only the entries in column $c$ are affected by the root operators $f_{i_r}^{a_r},\ldots,f_{i_s}^{a_s}$. Since
		\[
			T_r= \widetilde{\omega}_{i_{r-1}}^{m_{r-1}}\oplus f_{i_r}^{p_r}(\widetilde{\omega}_{m_r}^{i_r}\oplus\cdots\oplus \widetilde{\omega}_{i_{s-1}}^{m_{s-1}}f_{i_s}^{p_s}(T_s)\cdots)
			\quad\mbox{and}\quad
			T_r'= \widetilde{\omega}_{i_{r-1}}^{m_{r-1}}\oplus f_{i_r}^{q_r}(\widetilde{\omega}_{m_r}^{i_r}\oplus\cdots\oplus \widetilde{\omega}_{i_{s-1}}^{m_{s-1}}f_{i_s}^{q_s}(T_s')\cdots),
		\]	
		$T_r$ and $T_r'$ must coincide outside of column $c$. Since we already deduced that $wt(T_r)=wt(T'_r)$, it follows that column $c$ of $T_r$ and $T_r'$ have the same weight. By column-strictness of $T_r$ and $T_r'$, column $c$ of $T_r$ and $T_r'$ must coincide, so $T_r=T_r'$.
		
		To complete the proof, let $\hat{s}$ be the largest index $\hat{s}<r$ such that $p_{\hat{s}}\neq q_{\hat{s}}$. If no such index exists, then $T=T'$. Otherwise, set $\hat{r}_1$ to be the smallest index such that $i_{\hat{r}_1}$ occurs multiple times in $\bm{i}$ and $\mathcal{I}_w(\hat{r}_1)=\mathcal{I}_w(\hat{s})$. 
		We have $\hat{r}_1<\hat{s}$ because some other $\hat{s}'$ distinct from~$\hat{s}$ such that $p_{\hat{s}'}\neq q_{\hat{s}'}$ and $i_{\hat{s}'}=i_{\hat{s}}$ must exist as before,
		and $\hat{s}'$ is also less than~$r$ by property (i) of $[r,s]$.
		Use the previous algorithm to find an interval $[\hat{r},\hat{s}]\subseteq [\hat{r}_1,\hat{s}]$ satisfying $\hat{r}<\hat{s}$, (i), and (ii). Construct $T_{\hat{r}},T_{\hat{r}}',T_{\hat{s}},T'_{\hat{s}}$, and argue exactly as in the case of $[r,s]$ that $T_{\hat{r}}=T_{\hat{r}}'$.
		
		Continuing in this manner for a finite number of steps will show that $T=T'$. 
	\end{proof}
	
	As we will show  in Theorem~\ref{thm:011},
	it is not only sufficient but also necessary that $w$ be multiplicity-free for the Schubert polynomial $\mathfrak{S}_w$ to be zero-one.

\section{Pattern avoidance conditions for multiplicity-freeness}
	\label{sec:mult-patt}
	This section is devoted to showing that $w$ being multiplicity-free is equivalent to a certain pattern avoidance condition. We then prove our full characterization of zero-one Schubert polynomials. 
	
	%\medskip
	
	We start with several definitions. 
	\begin{definition}
		\label{def:confA}
		We say a Rothe diagram $D=D(w)$ contains an instance of configuration $\mathrm{A}$ if there are $r_1,c_1,r_2,c_2,r_3$ such that $1\leq r_3<r_1<r_2$, $1<c_1<c_2$, $(r_1,c_1),(r_2,c_2)\in D$, $(r_1,c_2)\notin D$, and $w_{r_3}<c_1$. 
	\end{definition}
	\begin{definition}
		\label{def:confB}
		We say a Rothe diagram $D=D(w)$ contains an instance of configuration $\mathrm{B}$ if there are $r_1,c_1,r_2,c_2,r_3,r_4$ such that $1\leq r_4\neq r_3<r_1<r_2$, $2<c_1<c_2$, $(r_1,c_1),(r_1,c_2),(r_2,c_2)\in D$, $w_{r_3}<c_1$, and $w_{r_4}<c_2$.
	\end{definition}
	\begin{definition}
		\label{def:confBprime}
		We say a Rothe diagram $D=D(w)$ contains an instance of configuration $\mathrm{B}'$ if there are $r_1,c_1,r_2,c_2,r_3,r_4$ such that $1\leq r_4<r_3<r_1<r_2$, $2<c_1<c_2$, $(r_1,c_1),(r_1,c_2),(r_2,c_1)\in D$, $w_{r_3}<c_1$, and $w_{r_4}<c_1$.
	\end{definition}
	Given a Rothe diagram $D(w)$, we will call a tuple $(r_1,c_1,r_2,c_2,r_3)$ meeting the conditions of Definition~\ref{def:confA} an instance of configuration $\mathrm{A}$ in $D(w)$. Similarly, we will call a tuple $(r_1,c_1,r_2,c_2,r_3,r_4)$ meeting the conditions of Definition~\ref{def:confB} (resp. \ref{def:confBprime}) an instance of configuration $\mathrm{B}$ (resp. $\mathrm{B}'$) in $D(w)$.

	\begin{figure}[ht]
		\centering
		\begin{tikzpicture}[scale=.75]
			\draw (0,0)--(5,0)--(5,5)--(0,5)--(0,0);
			\draw (5,4.5) -- (0.5,4.5) node {$\bullet$} -- (0.5,0);
			\draw (5,3.5) -- (2.5,3.5) node {$\bullet$} -- (2.5,0);
			\draw (5,2.5) -- (1.5,2.5) node {$\bullet$} -- (1.5,0);
			\draw (5,1.5) -- (4.5,1.5) node {$\bullet$} -- (4.5,0);
			\draw (5,0.5) -- (3.5,0.5) node {$\bullet$} -- (3.5,0);
			
			\filldraw[draw=black,fill=lightgray] (1,3)--(2,3)--(2,4)--(1,4)--(1,3);
			\filldraw[draw=black,fill=lightgray] (3,1)--(4,1)--(4,2)--(3,2)--(3,1);
%			\draw[draw=black] (3,3)--(4,3)--(4,4)--(3,4)--(3,3);
			
			\node at (1.5,5.25) {$c_1$};
			\node at (3.5,5.25) {$c_2$};
			\node at (-.3,4.5) {$r_3$};
			\node at (-.3,3.5) {$r_1$};
			\node at (-.3,1.5) {$r_2$};
			\node at (2.5,-.6) {\large $\mathrm{A}$};
		\end{tikzpicture}
		\quad
		\begin{tikzpicture}[scale=.625]
			\draw (0,0)--(6,0)--(6,6)--(0,6)--(0,0);
			\draw (6,5.5) -- (2.5,5.5) node {$\bullet$} -- (2.5,0);
			\draw (6,4.5) -- (0.5,4.5) node {$\bullet$} -- (0.5,0);
			\draw (6,3.5) -- (4.5,3.5) node {$\bullet$} -- (4.5,0);
			\draw (6,2.5) -- (1.5,2.5) node {$\bullet$} -- (1.5,0);
			\draw (6,1.5) -- (5.5,1.5) node {$\bullet$} -- (5.5,0);
			\draw (6,0.5) -- (3.5,0.5) node {$\bullet$} -- (3.5,0);
			
			\filldraw[draw=black,fill=lightgray] (0,5)--(1,5)--(1,6)--(0,6)--(0,5);
			\filldraw[draw=black,fill=lightgray] (1,5)--(2,5)--(2,6)--(1,6)--(1,5);
			\filldraw[draw=black,fill=lightgray] (1,3)--(1,4)--(2,4)--(2,3)--(1,3);
			\filldraw[draw=black,fill=lightgray] (3,3)--(4,3)--(4,4)--(3,4)--(3,3);
			\filldraw[draw=black,fill=lightgray] (3,1)--(4,1)--(4,2)--(3,2)--(3,1);
			
			\node at (1.5,6.25) {$c_1$};
			\node at (3.5,6.25) {$c_2$};
			\node at (-.3,5.5) {$r_4$};
			\node at (-.3,4.5) {$r_3$};
			\node at (-.3,3.5) {$r_1$};
			\node at (-.3,1.5) {$r_2$};
			\node at (3,-.65) {\large $\mathrm{B}$};
		\end{tikzpicture}
		\quad
		\begin{tikzpicture}[scale=.75]
			\draw (0,0)--(5,0)--(5,5)--(0,5)--(0,0);
			\draw (5,4.5) -- (0.5,4.5) node {$\bullet$} -- (0.5,0);
			\draw (5,3.5) -- (1.5,3.5) node {$\bullet$} -- (1.5,0);
			\draw (5,2.5) -- (4.5,2.5) node {$\bullet$} -- (4.5,0);
			\draw (5,1.5) -- (3.5,1.5) node {$\bullet$} -- (3.5,0);
			\draw (5,0.5) -- (2.5,0.5) node {$\bullet$} -- (2.5,0);
			
			\filldraw[draw=black,fill=lightgray] (2,1)--(3,1)--(3,2)--(2,2)--(2,1);
			\filldraw[draw=black,fill=lightgray] (2,2)--(3,2)--(3,3)--(2,3)--(2,2);
			\filldraw[draw=black,fill=lightgray] (3,2)--(4,2)--(4,3)--(3,3)--(3,2);
			
			\node at (2.5,5.25) {$c_1$};
			\node at (3.5,5.25) {$c_2$};
			\node at (-.3,4.5) {$r_4$};
			\node at (-.3,3.5) {$r_3$};
			\node at (-.3,2.5) {$r_1$};
			\node at (-.3,1.5) {$r_2$};
			\node at (2.5,-.6) {\large $\mathrm{B}'$};
		\end{tikzpicture}
		\caption{Examples of instances of the configurations $\mathrm{A}$, $\mathrm{B}$, and $\mathrm{B}'$ in Rothe diagrams. Both the hooks removed from the $n\times n$ grid to form each Rothe diagram and the remaining boxes are shown.}
	\end{figure}
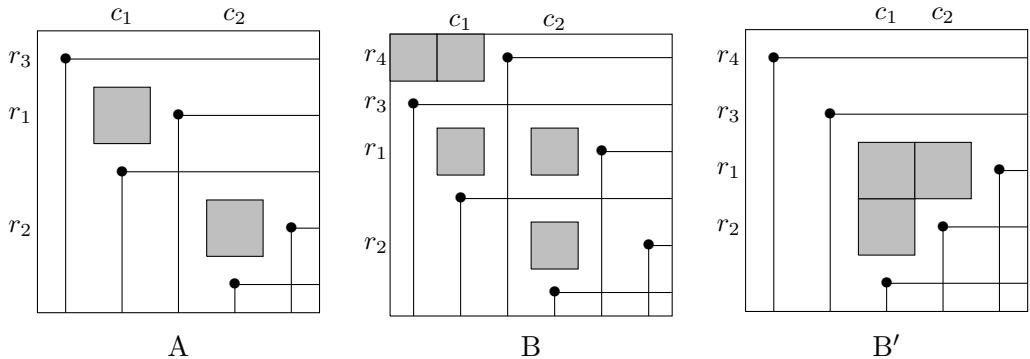
	
	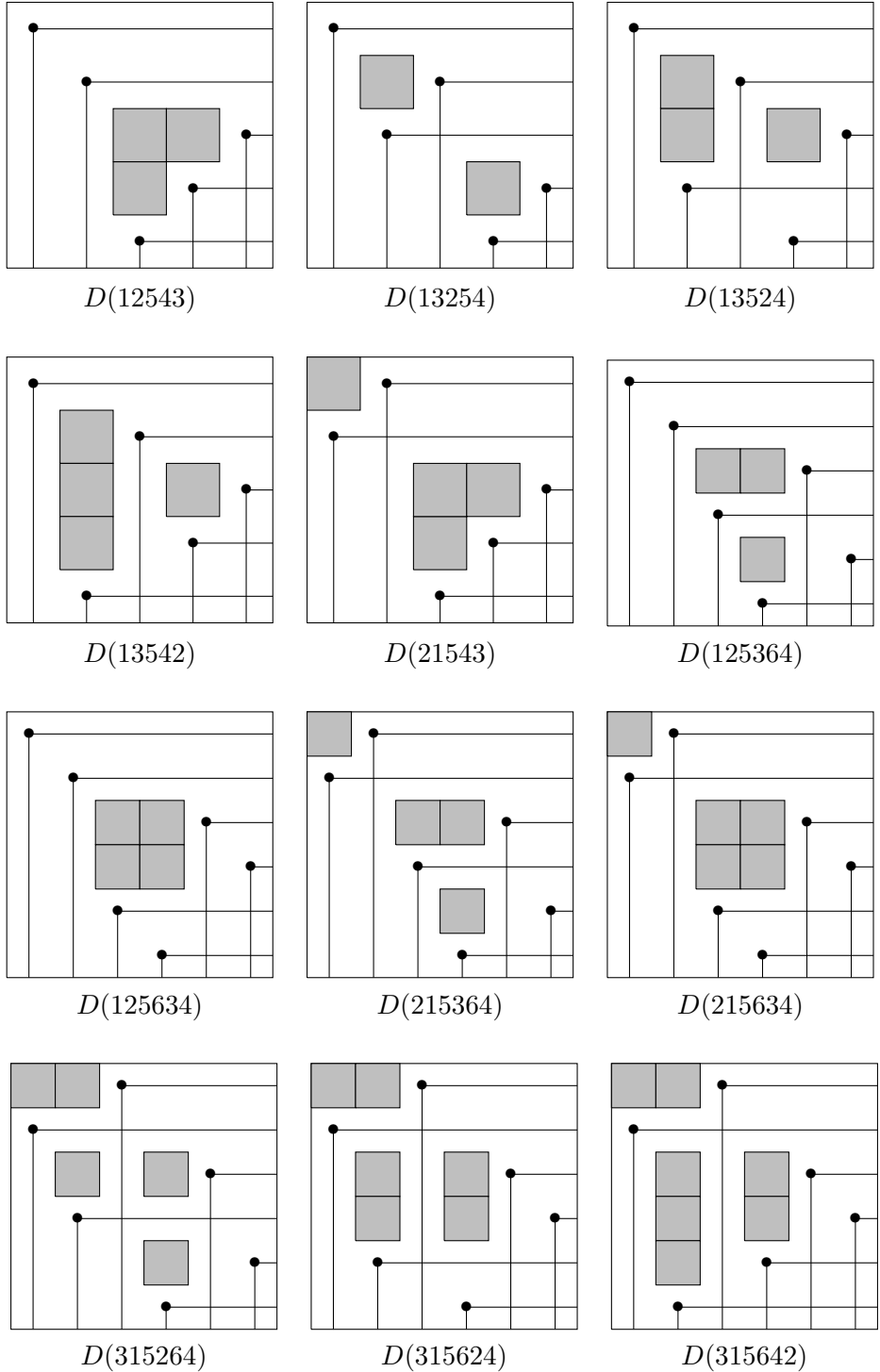
\begin{figure}[ht]
		\centering
		\begin{tikzpicture}[scale=.75]
		\draw (0,0)--(5,0)--(5,5)--(0,5)--(0,0);
		\draw (5,4.5) -- (0.5,4.5) node {$\bullet$} -- (0.5,0);
		\draw (5,3.5) -- (1.5,3.5) node {$\bullet$} -- (1.5,0);
		\draw (5,2.5) -- (4.5,2.5) node {$\bullet$} -- (4.5,0);
		\draw (5,1.5) -- (3.5,1.5) node {$\bullet$} -- (3.5,0);
		\draw (5,0.5) -- (2.5,0.5) node {$\bullet$} -- (2.5,0);
		
		\filldraw[draw=black,fill=lightgray] (2,1)--(3,1)--(3,2)--(2,2)--(2,1);
		\filldraw[draw=black,fill=lightgray] (2,2)--(3,2)--(3,3)--(2,3)--(2,2);
		\filldraw[draw=black,fill=lightgray] (3,2)--(4,2)--(4,3)--(3,3)--(3,2);
		
		\node at (2.5,-.6) {\large $D(12543)$};
		\end{tikzpicture}
		\quad
		\begin{tikzpicture}[scale=.75]
		\draw (0,0)--(5,0)--(5,5)--(0,5)--(0,0);
		\draw (5,4.5) -- (0.5,4.5) node {$\bullet$} -- (0.5,0);
		\draw (5,3.5) -- (2.5,3.5) node {$\bullet$} -- (2.5,0);
		\draw (5,2.5) -- (1.5,2.5) node {$\bullet$} -- (1.5,0);
		\draw (5,1.5) -- (4.5,1.5) node {$\bullet$} -- (4.5,0);
		\draw (5,0.5) -- (3.5,0.5) node {$\bullet$} -- (3.5,0);
		
		\filldraw[draw=black,fill=lightgray] (1,3)--(2,3)--(2,4)--(1,4)--(1,3);
		\filldraw[draw=black,fill=lightgray] (3,1)--(4,1)--(4,2)--(3,2)--(3,1);
		
		\node at (2.5,-.6) {\large $D(13254)$};
		\end{tikzpicture}
		\quad
		\begin{tikzpicture}[scale=.75]
		\draw (0,0)--(5,0)--(5,5)--(0,5)--(0,0);
		\draw (5,4.5) -- (0.5,4.5) node {$\bullet$} -- (0.5,0);
		\draw (5,3.5) -- (2.5,3.5) node {$\bullet$} -- (2.5,0);
		\draw (5,2.5) -- (4.5,2.5) node {$\bullet$} -- (4.5,0);
		\draw (5,1.5) -- (1.5,1.5) node {$\bullet$} -- (1.5,0);
		\draw (5,0.5) -- (3.5,0.5) node {$\bullet$} -- (3.5,0);
		
		\filldraw[draw=black,fill=lightgray] (1,3)--(2,3)--(2,4)--(1,4)--(1,3);
		\filldraw[draw=black,fill=lightgray] (1,2)--(2,2)--(2,3)--(1,3)--(1,2);
		\filldraw[draw=black,fill=lightgray] (3,2)--(4,2)--(4,3)--(3,3)--(3,2);
		
		\node at (2.5,-.6) {\large $D(13524)$};
		\end{tikzpicture}
		\vspace{3ex}
		
		\begin{tikzpicture}[scale=.75]
		\draw (0,0)--(5,0)--(5,5)--(0,5)--(0,0);
		\draw (5,4.5) -- (0.5,4.5) node {$\bullet$} -- (0.5,0);
		\draw (5,3.5) -- (2.5,3.5) node {$\bullet$} -- (2.5,0);
		\draw (5,2.5) -- (4.5,2.5) node {$\bullet$} -- (4.5,0);
		\draw (5,1.5) -- (3.5,1.5) node {$\bullet$} -- (3.5,0);
		\draw (5,0.5) -- (1.5,0.5) node {$\bullet$} -- (1.5,0);
		
		\filldraw[draw=black,fill=lightgray] (1,3)--(2,3)--(2,4)--(1,4)--(1,3);
		\filldraw[draw=black,fill=lightgray] (1,2)--(2,2)--(2,3)--(1,3)--(1,2);
		\filldraw[draw=black,fill=lightgray] (1,1)--(2,1)--(2,2)--(1,2)--(1,1);
		\filldraw[draw=black,fill=lightgray] (3,2)--(4,2)--(4,3)--(3,3)--(3,2);
		
		\node at (2.5,-.6) {\large $D(13542)$};
		\end{tikzpicture}
		\quad
		\begin{tikzpicture}[scale=.75]
		\draw (0,0)--(5,0)--(5,5)--(0,5)--(0,0);
		\draw (5,4.5) -- (1.5,4.5) node {$\bullet$} -- (1.5,0);
		\draw (5,3.5) -- (0.5,3.5) node {$\bullet$} -- (0.5,0);
		\draw (5,2.5) -- (4.5,2.5) node {$\bullet$} -- (4.5,0);
		\draw (5,1.5) -- (3.5,1.5) node {$\bullet$} -- (3.5,0);
		\draw (5,0.5) -- (2.5,0.5) node {$\bullet$} -- (2.5,0);
		
		\filldraw[draw=black,fill=lightgray] (0,4)--(1,4)--(1,5)--(0,5)--(0,4);
		\filldraw[draw=black,fill=lightgray] (2,1)--(3,1)--(3,2)--(2,2)--(2,1);
		\filldraw[draw=black,fill=lightgray] (2,2)--(3,2)--(3,3)--(2,3)--(2,2);
		\filldraw[draw=black,fill=lightgray] (3,2)--(4,2)--(4,3)--(3,3)--(3,2);
		
		\node at (2.5,-.6) {\large $D(21543)$};
		\end{tikzpicture}
		\quad
		\begin{tikzpicture}[scale=.625]
		\draw (0,0)--(6,0)--(6,6)--(0,6)--(0,0);
		\draw (6,5.5) -- (0.5,5.5) node {$\bullet$} -- (0.5,0);
		\draw (6,4.5) -- (1.5,4.5) node {$\bullet$} -- (1.5,0);
		\draw (6,3.5) -- (4.5,3.5) node {$\bullet$} -- (4.5,0);
		\draw (6,2.5) -- (2.5,2.5) node {$\bullet$} -- (2.5,0);
		\draw (6,1.5) -- (5.5,1.5) node {$\bullet$} -- (5.5,0);
		\draw (6,0.5) -- (3.5,0.5) node {$\bullet$} -- (3.5,0);
		
		\filldraw[draw=black,fill=lightgray] (2,3)--(3,3)--(3,4)--(2,4)--(2,3);
		\filldraw[draw=black,fill=lightgray] (3,1)--(4,1)--(4,2)--(3,2)--(3,1);
		\filldraw[draw=black,fill=lightgray] (3,3)--(4,3)--(4,4)--(3,4)--(3,3);
		
		\node at (3,-.65) {\large $D(125364)$};
		\end{tikzpicture}
		\vspace{3ex}
		
		\begin{tikzpicture}[scale=.625]
		\draw (0,0)--(6,0)--(6,6)--(0,6)--(0,0);
		\draw (6,5.5) -- (0.5,5.5) node {$\bullet$} -- (0.5,0);
		\draw (6,4.5) -- (1.5,4.5) node {$\bullet$} -- (1.5,0);
		\draw (6,3.5) -- (4.5,3.5) node {$\bullet$} -- (4.5,0);
		\draw (6,2.5) -- (5.5,2.5) node {$\bullet$} -- (5.5,0);
		\draw (6,1.5) -- (2.5,1.5) node {$\bullet$} -- (2.5,0);
		\draw (6,0.5) -- (3.5,0.5) node {$\bullet$} -- (3.5,0);
		
		\filldraw[draw=black,fill=lightgray] (2,3)--(3,3)--(3,4)--(2,4)--(2,3);
		\filldraw[draw=black,fill=lightgray] (3,3)--(4,3)--(4,4)--(3,4)--(3,3);
		\filldraw[draw=black,fill=lightgray] (2,2)--(3,2)--(3,3)--(2,3)--(2,2);
		\filldraw[draw=black,fill=lightgray] (3,2)--(4,2)--(4,3)--(3,3)--(3,2);
		\node at (3,-.65) {\large $D(125634)$};
		\end{tikzpicture}
		\quad
		\begin{tikzpicture}[scale=.625]
		\draw (0,0)--(6,0)--(6,6)--(0,6)--(0,0);
		\draw (6,5.5) -- (1.5,5.5) node {$\bullet$} -- (1.5,0);
		\draw (6,4.5) -- (0.5,4.5) node {$\bullet$} -- (0.5,0);
		\draw (6,3.5) -- (4.5,3.5) node {$\bullet$} -- (4.5,0);
		\draw (6,2.5) -- (2.5,2.5) node {$\bullet$} -- (2.5,0);
		\draw (6,1.5) -- (5.5,1.5) node {$\bullet$} -- (5.5,0);
		\draw (6,0.5) -- (3.5,0.5) node {$\bullet$} -- (3.5,0);
		
		\filldraw[draw=black,fill=lightgray] (0,5)--(1,5)--(1,6)--(0,6)--(0,5);
		\filldraw[draw=black,fill=lightgray] (2,3)--(3,3)--(3,4)--(2,4)--(2,3);
		\filldraw[draw=black,fill=lightgray] (3,1)--(4,1)--(4,2)--(3,2)--(3,1);
		\filldraw[draw=black,fill=lightgray] (3,3)--(4,3)--(4,4)--(3,4)--(3,3);
		
		\node at (3,-.65) {\large $D(215364)$};
		\end{tikzpicture}
		\quad
		\begin{tikzpicture}[scale=.625]
		\draw (0,0)--(6,0)--(6,6)--(0,6)--(0,0);
		\draw (6,5.5) -- (1.5,5.5) node {$\bullet$} -- (1.5,0);
		\draw (6,4.5) -- (0.5,4.5) node {$\bullet$} -- (0.5,0);
		\draw (6,3.5) -- (4.5,3.5) node {$\bullet$} -- (4.5,0);
		\draw (6,2.5) -- (5.5,2.5) node {$\bullet$} -- (5.5,0);
		\draw (6,1.5) -- (2.5,1.5) node {$\bullet$} -- (2.5,0);
		\draw (6,0.5) -- (3.5,0.5) node {$\bullet$} -- (3.5,0);
		
		\filldraw[draw=black,fill=lightgray] (0,5)--(1,5)--(1,6)--(0,6)--(0,5);
		\filldraw[draw=black,fill=lightgray] (2,3)--(3,3)--(3,4)--(2,4)--(2,3);
		\filldraw[draw=black,fill=lightgray] (3,3)--(4,3)--(4,4)--(3,4)--(3,3);
		\filldraw[draw=black,fill=lightgray] (2,2)--(3,2)--(3,3)--(2,3)--(2,2);
		\filldraw[draw=black,fill=lightgray] (3,2)--(4,2)--(4,3)--(3,3)--(3,2);
		\node at (3,-.65) {\large $D(215634)$};
		\end{tikzpicture}
		\vspace{3ex}
		
		\begin{tikzpicture}[scale=.625]
		\draw (0,0)--(6,0)--(6,6)--(0,6)--(0,0);
		\draw (6,5.5) -- (2.5,5.5) node {$\bullet$} -- (2.5,0);
		\draw (6,4.5) -- (0.5,4.5) node {$\bullet$} -- (0.5,0);
		\draw (6,3.5) -- (4.5,3.5) node {$\bullet$} -- (4.5,0);
		\draw (6,2.5) -- (1.5,2.5) node {$\bullet$} -- (1.5,0);
		\draw (6,1.5) -- (5.5,1.5) node {$\bullet$} -- (5.5,0);
		\draw (6,0.5) -- (3.5,0.5) node {$\bullet$} -- (3.5,0);
		
		\filldraw[draw=black,fill=lightgray] (0,5)--(1,5)--(1,6)--(0,6)--(0,5);
		\filldraw[draw=black,fill=lightgray] (1,5)--(2,5)--(2,6)--(1,6)--(1,5);
		\filldraw[draw=black,fill=lightgray] (1,3)--(1,4)--(2,4)--(2,3)--(1,3);
		\filldraw[draw=black,fill=lightgray] (3,3)--(4,3)--(4,4)--(3,4)--(3,3);
		\filldraw[draw=black,fill=lightgray] (3,1)--(4,1)--(4,2)--(3,2)--(3,1);
		\node at (3,-.65) {\large $D(315264)$};
		\end{tikzpicture}
		\quad
		\begin{tikzpicture}[scale=.625]
		\draw (0,0)--(6,0)--(6,6)--(0,6)--(0,0);
		\draw (6,5.5) -- (2.5,5.5) node {$\bullet$} -- (2.5,0);
		\draw (6,4.5) -- (0.5,4.5) node {$\bullet$} -- (0.5,0);
		\draw (6,3.5) -- (4.5,3.5) node {$\bullet$} -- (4.5,0);
		\draw (6,2.5) -- (5.5,2.5) node {$\bullet$} -- (5.5,0);
		\draw (6,1.5) -- (1.5,1.5) node {$\bullet$} -- (1.5,0);
		\draw (6,0.5) -- (3.5,0.5) node {$\bullet$} -- (3.5,0);
		
		\filldraw[draw=black,fill=lightgray] (0,5)--(1,5)--(1,6)--(0,6)--(0,5);
		\filldraw[draw=black,fill=lightgray] (1,5)--(2,5)--(2,6)--(1,6)--(1,5);
		\filldraw[draw=black,fill=lightgray] (1,3)--(1,4)--(2,4)--(2,3)--(1,3);
		\filldraw[draw=black,fill=lightgray] (3,3)--(4,3)--(4,4)--(3,4)--(3,3);
		\filldraw[draw=black,fill=lightgray] (3,2)--(4,2)--(4,3)--(3,3)--(3,2);
		\filldraw[draw=black,fill=lightgray] (1,2)--(2,2)--(2,3)--(1,3)--(1,2);
		
		\node at (3,-.65) {\large $D(315624)$};
		\end{tikzpicture}
		\quad
		\begin{tikzpicture}[scale=.625]
		\draw (0,0)--(6,0)--(6,6)--(0,6)--(0,0);
		\draw (6,5.5) -- (2.5,5.5) node {$\bullet$} -- (2.5,0);
		\draw (6,4.5) -- (0.5,4.5) node {$\bullet$} -- (0.5,0);
		\draw (6,3.5) -- (4.5,3.5) node {$\bullet$} -- (4.5,0);
		\draw (6,2.5) -- (5.5,2.5) node {$\bullet$} -- (5.5,0);
		\draw (6,1.5) -- (3.5,1.5) node {$\bullet$} -- (3.5,0);
		\draw (6,0.5) -- (1.5,0.5) node {$\bullet$} -- (1.5,0);
		
		\filldraw[draw=black,fill=lightgray] (0,5)--(1,5)--(1,6)--(0,6)--(0,5);
		\filldraw[draw=black,fill=lightgray] (1,5)--(2,5)--(2,6)--(1,6)--(1,5);
		\filldraw[draw=black,fill=lightgray] (1,3)--(1,4)--(2,4)--(2,3)--(1,3);
		\filldraw[draw=black,fill=lightgray] (3,3)--(4,3)--(4,4)--(3,4)--(3,3);
		\filldraw[draw=black,fill=lightgray] (3,2)--(4,2)--(4,3)--(3,3)--(3,2);
		\filldraw[draw=black,fill=lightgray] (1,2)--(2,2)--(2,3)--(1,3)--(1,2);
		\filldraw[draw=black,fill=lightgray] (1,1)--(2,1)--(2,2)--(1,2)--(1,1);
		\node at (3,-.65) {\large $D(315642)$};
		\end{tikzpicture}
		\caption{The Rothe diagrams of the twelve multiplicitous patterns.}
		\label{fig:badpatterns}
	\end{figure}

	\begin{theorem}\label{thm:mult-pattern}
		If $w\in S_n$ is a permutation such that $D(w)$ does not contain any instance of configuration $\mathrm{A}$, $\mathrm{B}$, or $\mathrm{B}'$,
		then $w$ is multiplicity-free.
	\end{theorem}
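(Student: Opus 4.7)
The plan is to prove the contrapositive: assuming $w$ is not multiplicity-free, produce an instance of $\mathrm{A}$, $\mathrm{B}$, or $\mathrm{B}'$ in $D(w)$. By hypothesis there are $r<s$ in $[l]$ with $i_r=i_s=:i$ such that $\mathcal{I}_w(r)$ and $\mathcal{I}_w(s)$ are not both equal to a single common singleton. Each orthodontia step is an adjacent row swap, so there is a row permutation $\sigma_j$ with the property that, for every column $c$ not yet emptied by step $j$, $(k,c)\in D(w)$ if and only if $(\sigma_j(k),c)\in O(w,j)$. This pullback lets us detect boxes of $D(w)$ from boxes in the orthodontia diagrams.

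I would split into two cases. In Case 1, $|\mathcal{I}_w(r)|\geq 2$: pick $c_1<c_2$ in $\mathcal{I}_w(r)$ with $c_1$ the leftmost nonempty column of $O(w,r-1)_-$, and set $p=\sigma_{r-1}^{-1}(i+1)$, so that $(p,c_1),(p,c_2)\in D(w)$. A third box $(r_2,c')\in D(w)$ with $r_2>p$ and $c'\in\{c_1,c_2\}$ is extracted by following the later processing of columns $c_1$ and $c_2$: since $i_r$ occurs again later in $\bm{i}$, the missing tooth $i$ must reappear in the orthodontia, which cannot happen unless one of these columns survives with structure below row $p$ in $D(w)$. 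This yields configuration $\mathrm{B}$ when $c'=c_2$ and $\mathrm{B}'$ when $c'=c_1$. In Case 2, $\mathcal{I}_w(r)=\{c_A\}$ and $\mathcal{I}_w(s)=\{c_B\}$ are distinct singletons, and monotonicity of the leftmost nonempty column in the orthodontia gives $c_A<c_B$. Setting $p_A=\sigma_{r-1}^{-1}(i+1)$ and $p_B=\sigma_{s-1}^{-1}(i+1)$, we have $(p_A,c_A),(p_B,c_B)\in D(w)$, while the singleton conditions at steps $r$ and $s$ (using that $c_A, c_B$ cannot have been emptied before step $s$) give $(p_A,c_B)\notin D(w)$ and $(p_B,c_A)\notin D(w)$. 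The northwest property of $D(w)$ now rules out $p_A\geq p_B$, since $p_A>p_B$ applied to $(p_A,c_A),(p_B,c_B)$ would force $(p_B,c_A)\in D(w)$; hence $p_A<p_B$, and we obtain configuration $\mathrm{A}$ with $r_1=p_A$, $r_2=p_B$, $c_1=c_A$, $c_2=c_B$.

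The remaining ingredient in each configuration is the leftward-dot condition $w_{r_3}<c_1$ (and, for $\mathrm{B}$ and $\mathrm{B}'$, also $w_{r_4}$). Any column strictly to the left of $c_1$ must be empty at step $r$, and the orthodontia empties a column only after reducing it to a shape $[j]$; this forces the column's dot position $w^{-1}(c')$ to lie at an early row. Each nonempty column $c'<c_1$ of $D(w)$ therefore supplies a row $r'<p$ with $w_{r'}=c'<c_1$, yielding $r_3$; a second such row $r_4$ exists whenever $c_1>2$, precisely the hypothesis under which configurations $\mathrm{B}$ and $\mathrm{B}'$ are defined. The main obstacle is extracting the ``lower box'' in Case 1: this requires a careful analysis of how the subsequent orthodontia steps must reuse the missing tooth $i$ and exploit the northwest property of both $D(w)$ and each $O(w,j)$ to force structure below row $p$ in one of columns $c_1,c_2$.
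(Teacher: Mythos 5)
Your overall strategy (contrapositive, pulling boxes of the orthodontia diagrams back to $D(w)$ via the accumulated row permutation) is the same as the paper's, and your Case~2 (two distinct singleton impact sets $\Rightarrow$ configuration $\mathrm{A}$) is essentially the paper's argument in that situation. The gap is in Case~1. You organize the cases by the \emph{size} of one impact set, whereas the paper organizes them by the \emph{containment relation} between $\mathcal{I}_w(r)$ and $\mathcal{I}_w(s)$, and this matters: when some impact set has size at least two, the correct configuration to exhibit is sometimes $\mathrm{A}$, not $\mathrm{B}$ or $\mathrm{B}'$. Concretely, your claim that the reappearance of the missing tooth $i$ at step $s$ ``cannot happen unless one of the columns $c_1,c_2$ survives with structure below row $p$'' is false: the tooth $i$ can reappear because of an entirely different column $c_3>c_2$ (the impact sets are then incomparable), in which case columns $c_1,c_2$ may have no box below row $p$ at all. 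Even when a lower box does exist, the two rows $r_3\neq r_4$ that configurations $\mathrm{B}$ and $\mathrm{B}'$ require are only guaranteed when the column carrying both boxes lies in \emph{both} impact sets (two boxes of that column must each climb to row $i+1$, forcing two gaps above the upper one); you get no such guarantee from $|\mathcal{I}_w(r)|\geq 2$ alone. A concrete failure: $w=142635$ has $D(w)=\{(2,2),(2,3),(4,3),(4,5)\}$, $\bm{i}=(1,3,2,1)$, $\mathcal{I}_w(1)=\{2,3\}$, $\mathcal{I}_w(4)=\{5\}$. Your Case~1 applies with $c_1=2,c_2=3$, $p=2$, and there is even a lower box $(4,3)$, yet $D(w)$ contains \emph{no} instance of $\mathrm{B}$ or $\mathrm{B}'$ (the requirement $2<c_1$, i.e.\ two admissible rows above, fails); the only available configuration is $\mathrm{A}$, built from one column of each impact set.

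Two further points. First, the subcase $|\mathcal{I}_w(r)|=1$, $|\mathcal{I}_w(s)|\geq 2$ is not covered: your Case~1 picks two columns from $\mathcal{I}_w(r)$ and your Case~2 needs both sets to be singletons, and the situation is not symmetric in $r$ and $s$ — if $\mathcal{I}_w(r)\subsetneq\mathcal{I}_w(s)$ the extra box of column $c_1$ sits \emph{above} row $\sigma_{s-1}^{-1}(i+1)$, so no $\mathrm{B}$ or $\mathrm{B}'$ arises and one must again produce an $\mathrm{A}$ (this is the paper's Case~3). Second, your derivation of $w_{r_3}<c_1$ from columns to the left of $c_1$ being emptied is not quite right as stated (an empty or already-removed column $c'<c_1$ need not have $w^{-1}(c')<r_1$); the robust source of $r_3$ (and, when available, $r_4$) is the gaps in column $c_1$ itself above the box being moved. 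The fix is to redo the case analysis according to whether the two impact sets are incomparable, nested, or equal, as in the paper.
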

	Theorem~\ref{thm:011} will also imply the converse of this theorem.

	\begin{proof}
		We prove the contrapositive.
		Assume $w$ is not multiplicity-free and let $(\bm{i},\bm{m})$ be the orthodontic sequence of $w$. Then, we can find entries $i_{p_1}=i_{p_2}$ of $\bm{i}$ with $p_1<p_2$ such that either $\mathcal{I}_w(p_1)\neq \mathcal{I}_w(p_2)$, or $\mathcal{I}_w(p_1)=\mathcal{I}_w(p_2)$ with $|\mathcal{I}_w(p_1)|>1$. We show that $D(w)$ must contain at least one instance of configuration $\mathrm{A}$, $\mathrm{B}$, or $\mathrm{B}'$.\\
		
		%\medskip
		
		\noindent\emph{Case 1:} Assume that $\mathcal{I}_w(p_1)\nsubseteq\mathcal{I}_w(p_2)$ and $\mathcal{I}_w(p_2)\nsubseteq\mathcal{I}_w(p_1)$.		
		Take $c_1\in \mathcal{I}_w(p_1)\backslash \mathcal{I}_w(p_2)$ and $c_2\in \mathcal{I}_w(p_2)\backslash \mathcal{I}_w(p_1)$.
		We show that columns $c_1$ and $c_2$ of $D(w)$ contain an instance of configuration $\mathrm{A}$. 
		
		In step $p_1$ of the orthodontia on $D(w)$, a box in column $c_1$ is moved (by the missing tooth $i_{p_1}$) to row $i_{p_1}$. Let this box originally be in row $r_1$ of $D(w)$. Analogously, let the box in column $c_2$ moved to row $i_{p_2}$ in step $p_2$ of the orthodontia (by the missing tooth $i_{p_2}$) originally be in row $r_2$ of $D(w)$.
		Observe that $r_1<r_2$. 
		If $c_2<c_1$, then the northwest property would imply that $(r_1,c_2)\in D(w)$, contradicting that $c_2\notin \mathcal{I}_w(p_1)$. Thus $c_1<c_2$. Since $c_2\notin \mathcal{I}_w(p_1)$, $(r_1,c_2)\notin D(w)$. 
		Lastly, since the box $(r_1,c_1)$ is moved by the orthodontia, there is some box $(r_3,c_1)\notin D(w)$ with $r_3<r_1$. Consequently, $w_{r_3}<c_1$. Thus, $(r_1,c_1,r_2,c_2,r_3)$ is an instance of configuration $\mathrm{A}$.\\
		
		%\medskip
		
		\noindent\emph{Case 2:}
		Assume $\mathcal{I}_w(p_2)$ is a proper subset of $\mathcal{I}_w(p_1)$.
		Let $c_1=\max(\mathcal{I}_w(p_2))$
		and $c_2=\min(\mathcal{I}_w(p_1)\backslash\mathcal{I}_w(p_2))$.
		Let the box in column $c_1$ moved to row $i_{p_1}=i_{p_2}$ in step $p_1$ (resp. $p_2$) of the orthodontia originally be in row $r_1$ (resp. $r_2$) of $D(w)$. Observe that $r_1<r_2$.
		
		Assume first that $c_1<c_2$. Since $c_1\in\mathcal{I}_w(p_1)\cap \mathcal{I}_w(p_2)$, the boxes $(r_1,c_1)$ and $(r_2,c_2)$ both move weakly above row $i_{p_1}$ in the orthodontia. Then,
		we can find indices $r_3,r_4$ with $r_4<r_3<r_1$ such that $(r_3,c_1),(r_4,c_1)\notin D(w)$. Hence, $w_{r_3}<c_1$ and $w_{r_4}<c_1$, so $(r_1,c_1,r_2,c_2,r_3,r_4)$ is an instance of configuration $\mathrm{B}'$.
		
		Otherwise $c_1>c_2$. Since the box $(r_1,c_2)$ is moved by the orthodontia, we can find $r_3<r_1$ with $(r_3,c_2)\notin D(w)$. Then $w_{r_3}<c_2$. As we are assuming $c_2<c_1$, $(r_3,c_1)\notin D(w)$ also. Since the boxes $(r_1,c_1)$ and $(r_2,c_1)$ in $D(w)$ are moved weakly above row $i_{p_1}$ by the orthodontia, we can find some $r_4<r_1$ with $r_4\neq r_3$ such that $(r_4,c_1)\notin D(w)$. Then, $w_{r_4}<c_1$, so $(r_1,c_2,r_2,c_1,r_3,r_4)$ is an instance of configuration $\mathrm{B}$.\\
		
		%\medskip
		
		\noindent\emph{Case 3:} 
		Assume $\mathcal{I}_w(p_1)$ is a proper subset of $\mathcal{I}_w(p_2)$.
		This case is handled similarly to Case 2. 
		Let $c_1=\max(\mathcal{I}_w(p_1))$ and $c_2=\min(\mathcal{I}_w(p_2)\backslash\mathcal{I}_w(p_1))$.
		Let the box in column $c_1$ moved to row $i_{p_1}=i_{p_2}$ in step $p_1$ (resp. $p_2$) of the orthodontia originally be in row $r_1$ (resp. $r_2$) of $D(w)$. Observe that $r_1<r_2$.
		
		Assume $c_1<c_2$. Since the boxes $(r_1,c_1)$ and $(r_2,c_1)$ of $D(w)$ are moved weakly above row $i_{p_1}$ by the orthodontia, we can find indices $r_3,r_4$ with $r_4<r_3<r_1$ such that $(r_3,c_1),(r_4,c_1)\notin D(w)$. Then, $w_{r_3}<c_1$ and $w_{r_4}<c_1$. 
		Since $c_2\notin \mathcal{I}_w(p_1)$,
		$(r_1,c_2)\notin D(w)$. Then, $(r_1,c_1,r_2,c_2,r_3)$ is an instance of configuration $\mathrm{A}$.
		
		Otherwise $c_1>c_2$.
		As $c_2\notin \mathcal{I}_w(p_1)$, 
		$(r_1,c_2)\notin D(w)$. Since $(r_2,c_2),(r_1,c_1)\in D(w)$, this is a contradiction of the northwest property of $D(w)$.\\
		
		%\medskip
		
		\noindent\emph{Case 4:} 
		Assume $\mathcal{I}_w(p_1)=\mathcal{I}_w(p_2)$ is not a singleton. 
		Let $c_1,c_2\in \mathcal{I}_w(p_1)$ with $c_1<c_2$.
		Let the box in column $c_1$ moved to row $i_{p_1}=i_{p_2}$ in step $p_1$ (resp. $p_2$) of the orthodontia originally be in row $r_1$ (resp. $r_2$) of $D(w)$. Observe that $r_1<r_2$. Since the boxes $(r_1,c_1)$ and $(r_2,c_1)$ in $D(w)$ are moved weakly above row $i_{p_1}$ by the orthodontia, we can find indices $r_3,r_4$ with $r_4<r_3<r_1$ such that $(r_3,c_1),(r_4,c_1)\notin D(w)$. Then, $w_{r_3}<c_1$ and $w_{r_4}<c_1$. Thus, $(r_1,c_1,r_2,c_2,r_3,r_4)$ is an instance of configuration $\mathrm{B}'$.
	\end{proof}

	We now relate multiplicity-freeness to pattern avoidance of permutations. We begin by clarifying our pattern avoidance terminology. 
	A \emph{pattern} $\sigma$ of \emph{length}~$n$ is a permutation in~$S_n$.
	The length $n$ is a crucial part of the data of a pattern; 
	we make no identifications between patterns of different lengths, unlike what is usual when handling permutations in the Schubert calculus.
	A  permutation $w$ \emph{contains} $\sigma$ if $w$ has $n$ entries $w_{j_1},\ldots, w_{j_n}$ with $j_1<j_2<\cdots<j_n$ that are in the same relative order as $\sigma_1,\sigma_2,\ldots,\sigma_n$. In this case, the indices $j_1<j_2<\cdots<j_n$ are called a \emph{realization} of $\sigma$ in $w$. We say that $w$ \emph{avoids} the pattern $\sigma$ if $w$ does not contain $\sigma$. 
	To illustrate the dependence of these definitions on $n$, note that $w=154623$ contains the pattern 132, but not the pattern 132456.
	
	The following easy lemma gives a diagrammatic interpretation of pattern avoidance.
	\begin{lemma}
		\label{lem:patterndiagram}
		Let $w\in S_n$ be a permutation and $\sigma$ a pattern of length~$m$ contained in $w$. Choose a realization 
		$j_1<j_2<\cdots<j_{m}$ of $\sigma$ in $w$. 
		Then $D(\sigma)$ is obtained from $D(w)$ by deleting the rows $[n]\backslash\{j_1,\ldots,j_{m} \}$ and the columns $[n]\backslash\{w_{j_1},\ldots,w_{j_{m}} \}$, 
		and reindexing the remaining rows and columns by~$[m]$, preserving their order.
	\end{lemma}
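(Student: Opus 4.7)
The plan is to proceed purely from the definition of the Rothe diagram, checking that the membership condition restricts correctly under deleting rows and columns. Write $v_1 < v_2 < \cdots < v_m$ for the set $\{w_{j_1}, \ldots, w_{j_m}\}$ in increasing order, so that the columns kept in $D(w)$ are precisely the columns indexed by the $v_b$. The claim amounts to proving, for all $a,b \in [m]$, the equivalence
\[
(a,b) \in D(\sigma) \iff (j_a, v_b) \in D(w),
\]
since after reindexing rows $j_1<\cdots<j_m$ by $[m]$ and columns $v_1<\cdots<v_m$ by $[m]$, this gives exactly the claimed identification of diagrams.

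Next I would translate both sides via the defining inequalities $D(w) = \{(i,j) : j < w_i \text{ and } i < (w^{-1})_j\}$. The definition of $\sigma$ as the pattern realized by $j_1,\ldots,j_m$ gives $\sigma_a = |\{k : w_{j_k} \le w_{j_a}\}|$, hence $v_{\sigma_a} = w_{j_a}$; since the $v_b$ are strictly increasing, the inequality $b < \sigma_a$ is equivalent to $v_b < w_{j_a}$. Analogously, $\sigma^{-1}_b$ is the unique index $k$ with $w_{j_k} = v_b$, so $\sigma^{-1}_b = k_b$ where $j_{k_b} = w^{-1}(v_b)$; since $a \mapsto j_a$ is order-preserving, $a < \sigma^{-1}_b$ is equivalent to $j_a < w^{-1}(v_b)$. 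Combining the two translations yields the desired equivalence immediately.

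The only care required is to verify that the target of column index $b$ under reindexing really is $v_b$ (the $b$-th smallest of the retained column indices), and that the target of row index $a$ is $j_a$ (the $a$-th smallest of the retained row indices), which is immediate from the sorted ordering. There is no genuine obstacle; the whole argument is a bookkeeping check that the two conditions cutting out $D(\sigma)$ match the restrictions of the two conditions cutting out $D(w)$ to the surviving rows and columns.
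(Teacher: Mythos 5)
Your proof is correct and complete: the identity $v_{\sigma_a}=w_{j_a}$ together with the monotonicity of $a\mapsto j_a$ and $b\mapsto v_b$ gives exactly the equivalence $(a,b)\in D(\sigma)\iff (j_a,v_b)\in D(w)$, which is the content of the lemma. The paper omits the proof as an easy exercise, and your direct verification from the defining inequalities of the Rothe diagram is precisely the intended argument.
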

		
	\begin{definition}
		The \emph{multiplicitous patterns} are those in the set
		\[\mathrm{MPatt}=\{12543, 13254, 13524, 13542, 21543, 125364, 125634, 215364, 215634, 315264, 315624, 315642\}.\]
	\end{definition}
	
	\begin{theorem}
		\label{thm:badpatterns}
		Let $w\in S_n$.  Then $D(w)$ does not contain any instance of configuration $\mathrm{A}$, $\mathrm{B}$, or $\mathrm{B}'$ if and only if $w$ avoids all of the multiplicitous patterns.
	\end{theorem}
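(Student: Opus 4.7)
The proof will be by contrapositive in both directions.

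For the direction ``$w$ avoids all multiplicitous patterns $\Rightarrow$ $D(w)$ contains no configuration'', the contrapositive is that any realization of a multiplicitous pattern in $w$ produces a configuration in $D(w)$. By direct inspection of Figure~\ref{fig:badpatterns}, each of the twelve Rothe diagrams $D(\sigma)$ for $\sigma\in\mathrm{MPatt}$ contains an instance of configuration $\mathrm{A}$, $\mathrm{B}$, or $\mathrm{B}'$. Given a realization $j_1<\cdots<j_m$ of $\sigma$ in $w$, Lemma~\ref{lem:patterndiagram} identifies $D(\sigma)$ with the subdiagram of $D(w)$ obtained by restricting to rows $j_1,\ldots,j_m$ and to the columns whose indices form $\{w_{j_1},\ldots,w_{j_m}\}$; let $v_1<\cdots<v_m$ list the latter. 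The bijection ``row $r$ of $D(\sigma)$ corresponds to row $j_r$ of $D(w)$, column $c$ corresponds to column $v_c$'' preserves all the inequalities in the definition of a configuration: row and column orderings, box memberships, and witness bounds of the form $w_{r_3}<c_1$ (since the rank of $\sigma_{r_3}$ among $\sigma_1,\ldots,\sigma_m$ equals the rank of $w_{j_{r_3}}$ among $w_{j_1},\ldots,w_{j_m}$). Thus the configuration lifts from $D(\sigma)$ to $D(w)$.

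For the direction ``$D(w)$ contains no configuration $\Rightarrow$ $w$ avoids all multiplicitous patterns'', the contrapositive is that any configuration in $D(w)$ yields a multiplicitous pattern. Given a configuration with data $(r_1,c_1,r_2,c_2,r_3)$ or $(r_1,c_1,r_2,c_2,r_3,r_4)$, set $s_3=w^{-1}(c_1)$ and $s_4=w^{-1}(c_2)$. The candidate positions are the five or six rows $r_3, r_4, r_1, s_3, r_2, s_4$; the box conditions and witness bounds impose strict inequalities among the values $w_{r_3},w_{r_4},c_1=w_{s_3},c_2=w_{s_4},w_{r_1},w_{r_2}$, while the row ordering conditions and the requirements $(r_1,c_k),(r_2,c_k)\in D(w)$ impose a partial order on the position labels. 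The task is to enumerate linear extensions of this position poset and verify that each produces one of the twelve multiplicitous patterns.

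For configuration $\mathrm{A}$ the analysis is clean: the five positions are distinct with value ranks $1,3,2,5,4$ respectively, and the position poset (with relations $r_3<r_1<r_2$, $r_1<s_3$, $r_2<s_4$) has exactly three linear extensions, producing the patterns $13254$, $13524$, and $13542$. For configurations $\mathrm{B}$ and $\mathrm{B}'$ the analysis splits by: (i) whether $w_{r_4}$ lies below $c_1$ or between $c_1$ and $c_2$; (ii) for $\mathrm{B}'$, whether $w_{r_2}=c_2$ so that $s_4=r_2$ and a degenerate five-position case arises; (iii) the relative value orders of $w_{r_3}$ versus $w_{r_4}$ and of $w_{r_1}$ versus $w_{r_2}$; and (iv) the positions of $s_3,s_4$ relative to $r_2$. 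Most resulting six-element patterns are directly among $125364,125634,215364,215634,315264,315624,315642$, and the degenerate five-position cases yield $12543$ or $21543$. In the remaining sub-cases where the six-element pattern is not itself multiplicitous (for instance $125643$ or $215643$, arising when $s_4<s_3$), one of the six rows---typically $r_1$ or $s_3$---is redundant, and discarding it produces a five-element sub-pattern equal to $12543$ or $21543$.

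The primary obstacle is the volume of case analysis required for configurations $\mathrm{B}$ and $\mathrm{B}'$: one must track value-order, position-order, and degeneracy parameters simultaneously, and in the sub-cases where the six candidate positions do not realize a multiplicitous pattern outright, one must pinpoint which position to drop in order to recover a multiplicitous five-element sub-pattern. The enumeration is entirely finite and mechanical, but demands careful bookkeeping to ensure completeness.
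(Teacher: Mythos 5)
Your first direction coincides with the paper's: configurations are preserved under the row/column deletion of Lemma~\ref{lem:patterndiagram}, so a multiplicitous pattern in $w$ forces a configuration in $D(w)$; your explicit remark that the witness conditions $w_{r_3}<c_1$ become rank conditions and are preserved is the detail the paper leaves implicit, and it is correct. For the converse you take a genuinely different route. The paper does not read a pattern directly off a configuration; it observes that if every one-point deletion $\tau^j$ of $w$ already had a configuration-free diagram, then each cross $\{(j,q)\}\cup\{(p,w_j)\}$ would have to meet every configuration instance, and since an instance occupies at most four rows and two columns this forces $n\le 6$; the claim is then settled by exhaustive computation over $S_n$, $n\le 6$. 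Your approach---extracting the positions $r_3,r_4,r_1,r_2,w^{-1}(c_1),w^{-1}(c_2)$ and classifying the resulting $5$- or $6$-letter subpattern---is more direct, and where you actually carry it out (configuration $\mathrm{A}$) it is correct: the three linear extensions do yield $13254$, $13524$, $13542$.

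For configurations $\mathrm{B}$ and $\mathrm{B}'$, however, you have only described the case analysis, not performed it, and that analysis is the entire content of this direction. Two points keep it from being waved through. First, the extracted sub-permutation need not inherit the configuration: in configuration $\mathrm{B}$ with $c_1<w_{r_4}<c_2$, only one of the six chosen values lies below $c_1$, so after reindexing the condition $2<c_1$ fails; hence you cannot shortcut via a restriction argument and must genuinely check every (position order) $\times$ (value order) case against the list of twelve. Second, the claim that each non-multiplicitous $6$-letter outcome contains $12543$ or $21543$ (or another listed pattern) after dropping one position is an assertion requiring case-by-case verification---it does hold in the instances I checked, e.g.\ the outcome $216354$ contains $21543$ upon dropping $w^{-1}(c_1)$, and $135264$ contains $13254$ upon dropping $r_1$, but the drop is not always the same position. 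As written, the proposal is a sound strategy with the decisive finite verification still outstanding; the paper's covering argument plus exhaustive check is precisely the device that discharges this step cleanly.
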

	
	\begin{proof}
		It is easy to check (see Figure~\ref{fig:badpatterns}) that each of the twelve multiplicitous patterns contains an instance of configuration $\mathrm{A}$, $\mathrm{B}$, or $\mathrm{B}'$. Lemma~\ref{lem:patterndiagram} implies that if $w$ contains $\sigma\in\mathrm{MPatt}$, then deleting some rows and columns from $D(w)$ yields $D(\sigma)$. Since $D(\sigma)$ contains at least one instance of configuration $\mathrm{A}$, $\mathrm{B}$, or $\mathrm{B}'$, so does $D(w)$.
		
		Conversely, assume $D(w)$ contains at least one instance of configuration $\mathrm{A}$, $\mathrm{B}$, or $\mathrm{B}'$. We must show that $w$ contains some multiplicitous pattern. 
		Let $\tau^1,\tau^2,\ldots,\tau^n$ be the $n$ patterns of length~$n-1$ contained in $w$; 
		say $\tau^j$ is realized in $w$ by forgetting $w_j$. 
		Without loss of generality, we may assume none of $D(\tau^1),\ldots,D(\tau^n)$ contain an instance of configuration $\mathrm{A}$, $\mathrm{B}$, or $\mathrm{B}'$: if $D(\tau^j)$ does contain an instance of one of these configurations, replace $w$ by $\tau^j$ and iterate. 
		
		For each $j$, $D(\tau^j)$ is obtained from $D(w)$ by deleting row $j$ and column $w_j$. 
		Since $D(\tau^j)$ does not contain any instance of any of our three configurations, 
		each cross $\{(j,q) \mid (j,q)\in D(w) \}\cup \{(p,w_j) \mid (p,w_j)\in D(w)\}$ intersects each instance of every configuration contained in $D(w)$. However, an instance of configuration $\mathrm{A}$ involves only three rows and two columns, and an instance of $\mathrm{B}$ or $\mathrm{B}'$ involves only four rows and two columns. Thus, it must be that $w\in S_n$ for some $n\leq 6$. 
		It can be checked by exhaustion that the only permutations in $S_n$ with $n\leq 6$ that are minimal (with respect to pattern avoidance) among those whose Rothe diagrams contain an instance of configuration $\mathrm{A}$, $\mathrm{B}$, or $\mathrm{B}'$ are the twelve multiplicitous patterns.
	\end{proof}
	
	We are now ready to state our full characterization of zero-one Schubert polynomials,
	and most of the elements of the proof are at hand.
	
%	\pagebreak
	
	\begin{theorem} \label{thm:011}
	The following are equivalent:
		\begin{itemize}
			\item[(i)] The Schubert polynomial $\mathfrak{S}_w$ is zero-one.
			\item[(ii)] The permutation $w$ is multiplicity-free,
			\item[(iii)] The Rothe diagram $D(w)$ does not contain any instance of configuration $\mathrm{A}$, $\mathrm{B}$, or $\mathrm{B}'$,
			\item[(iv)] The permutation $w$ avoids the multiplicitous patterns, namely $12543$, $13254$, $13524$, $13542$, $21543$, $125364$, $125634$, $215364$, $215634$, $315264$, $315624$, and $315642$.
		\end{itemize}
	\end{theorem}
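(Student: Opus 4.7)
The plan is to close the cycle of implications. From the results preceding Theorem~\ref{thm:011}, we already have (ii) $\Rightarrow$ (i) via Theorem~\ref{thm:multfree}, (iii) $\Rightarrow$ (ii) via Theorem~\ref{thm:mult-pattern}, and (iii) $\Leftrightarrow$ (iv) via Theorem~\ref{thm:badpatterns}. It remains to establish (i) $\Rightarrow$ (iv), which I would prove by contrapositive: assuming $w$ contains some multiplicitous pattern, I will exhibit a coefficient of $\mathfrak{S}_w$ that is at least $2$.

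The key step is to show that the zero-one property is closed under pattern containment, for which I would invoke Theorem~\ref{thm:pattern}. Applied to $w \in S_n$ and the single-position deletion $\sigma \in S_{n-1}$ obtained by removing row $k$ and column $w_k$ of $D(w)$, that theorem yields an identity
\[\mathfrak{S}_w(x_1,\ldots,x_n) \;=\; M(x_1,\ldots,x_n)\,\mathfrak{S}_\sigma(x_1,\ldots,\widehat{x_k},\ldots,x_n) \;+\; F(x_1,\ldots,x_n),\]
with $M$ a monomial and $F \in \Z_{\geq 0}[x_1,\ldots,x_n]$. Since multiplication by a monomial acts on coefficients by a bijection onto a new support, and $F$ contributes only nonnegative amounts, every coefficient of $\mathfrak{S}_\sigma$ appears as a coefficient of $\mathfrak{S}_w$, possibly augmented. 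In particular, if $\mathfrak{S}_\sigma$ has a coefficient $\geq 2$, so does $\mathfrak{S}_w$. Iterating this one position at a time along the successive deletions described by Lemma~\ref{lem:patterndiagram}, I conclude that for any pattern $\sigma$ of $w$, a coefficient $\geq 2$ in $\mathfrak{S}_\sigma$ forces a coefficient $\geq 2$ in $\mathfrak{S}_w$.

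To finish, I would verify by direct computation that each of the twelve permutations in $\mathrm{MPatt}$ has a Schubert polynomial with some coefficient equal to at least~$2$; this is a finite check (for instance $\mathfrak{S}_{12543}$ contains $2\,x_1 x_2^2 x_3 x_4$). Combined with the closure statement, if $w$ contains any multiplicitous pattern~$\sigma$, then $\mathfrak{S}_w$ inherits a non-zero-one coefficient from $\mathfrak{S}_\sigma$, establishing (i) $\Rightarrow$ (iv).

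The essential conceptual content of (i) $\Rightarrow$ (iv) lies entirely in Theorem~\ref{thm:pattern}, whose proof is deferred to Section~\ref{sec:trans}; granted that, the present argument is a clean piece of bookkeeping followed by a finite verification on the twelve base patterns. The main obstacle is therefore outsourced, and the chief risk here is only in carrying out the coefficient checks on the twelve patterns without error.
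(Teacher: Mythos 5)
Your proposal is correct and follows essentially the same route as the paper: the cycle is closed by the three earlier theorems together with (i) $\Rightarrow$ (iv), which the paper likewise obtains from Corollary~\ref{cor:pattern} (iterated Theorem~\ref{thm:pattern}) plus the finite check that each of the twelve multiplicitous patterns has a Schubert coefficient equal to $2$. One correction to your sample verification: $\mathfrak{S}_{12543}$ is homogeneous of degree $\ell(12543)=3$, so it cannot contain the degree-$5$ monomial $x_1x_2^2x_3x_4$; the coefficient-$2$ term is in fact $2\,x_1x_2x_3$.
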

	\begin{proof} 
		Theorem~\ref{thm:multfree} shows $(ii)\Rightarrow(i)$. Theorem~\ref{thm:mult-pattern} shows $(iii)\Rightarrow(ii)$. Theorem~\ref{thm:badpatterns} shows $(iii)\Leftrightarrow(iv)$. The implication $(i)\Rightarrow(iv)$ will follow immediately from Corollary~\ref{cor:pattern}, since the Schubert polynomials associated to the permutations $12543$, $13254$, $13524$, $13542$, $21543$, $125364$, $125634$, $215364$, $215634$, $315264$, $315624$, and $315642$ each have a coefficient equal to 2. We prove Corollary~\ref{cor:pattern} in the next section.
	\end{proof}

\section{A coefficient-wise inequality for dual characters of flagged Weyl modules of diagrams}
	\label{sec:trans}
	The aim of this section is to prove a generalization of Theorem~\ref{thm:pattern}, namely, Theorem~\ref{thm:pattern2}.		
%	\begin{theorem} 
%		\label{thm:pattern2}
%		Fix a diagram $D\subseteq[n]\times [n]$ and let $\hd$ be the diagram obtained from $D$ by removing any boxes in row $k$ or column $l$. Then 
%		\begin{align*}
%			\chi_D(x_1, \ldots, x_n)=M(x_1, \ldots, x_n) \chi_{\hd}(x_{1}, \ldots,x_{k-1},0,x_{k+1},\ldots, x_{n})+F(x_1, \ldots, x_n),
%		\end{align*}
%		where $F(x_1,\ldots,x_n) \in \Z_{\geq 0}[x_1, \ldots, x_n]$ and  
%		\[M(x_1, \ldots, x_n) = \left(\prod_{(k,i)\in D}{x_k}\right)\left(\prod_{(i,l)\in D}{x_i} \right).\]
%	\end{theorem}
We now explain the necessary background and terminology for Theorem~\ref{thm:pattern2} and its proof.

	%\medskip

	Let $G=\mathrm{GL}(n,\mathbb{C})$ be the group of $n\times n$ invertible matrices over $\mathbb{C}$ and $B$ be the subgroup of $G$ consisting of the $n\times n$ upper-triangular matrices. The flagged Weyl module is a representation $\mathcal{M}_D$ of $B$ associated to a diagram $D$. The dual character of $\mathcal{M}_D$ has been shown in certain cases to be a Schubert polynomial \cite{KP} or a key polynomial \cite{flaggedLRrule}. We will use the construction of $\mathcal{M}_D$ in terms of determinants given in \cite{magyar}.
	
	Denote by $Y$ the $n\times n$ matrix with indeterminates $y_{ij}$ in the upper-triangular positions $i\leq j$ and zeros elsewhere. Let $\mathbb{C}[Y]$ be the polynomial ring in the indeterminates $\{y_{ij}\}_{i\leq j}$. Note that $B$ acts on $\mathbb{C}[Y]$ on the right via left translation: if $f(Y)\in \mathbb{C}[Y]$, then a matrix $b\in B$ acts on $f$ by $f(Y)\cdot b=f(b^{-1}Y)$. For any $R,S\subseteq [n]$, let $Y_S^R$ be the submatrix of $Y$ obtained by restricting to rows $R$ and columns $S$.
	
	For $R,S\subseteq [n]$, we say $R\leq S$ if $\#R=\#S$ and the $k$\/th least element of $R$ does not exceed the $k$\/th least element of $S$ for each $k$. For any diagrams $C=(C_1,\ldots, C_n)$ and $D=(D_1,\ldots, D_n)$, we say $C\leq D$ if $C_j\leq D_j$ for all $j\in[n]$.

	\begin{definition}
		For a diagram $D=(D_1,\ldots, D_n)$, the \emph{flagged Weyl module} $\mathcal{M}_D$ is defined by
		\[\mathcal{M}_D=\mathrm{Span}_\mathbb{C}\left\{\prod_{j=1}^{n}\det\left(Y_{D_j}^{C_j}\right)\ \middle|\   C\leq D \right\}. \]
		$\mathcal{M}_D$ is a $B$-module with the action inherited from the action of $B$ on $\mathbb{C}[Y]$. 
	\end{definition}
	Note that since $Y$ is upper-triangular, the condition $C\leq D$ is technically unnecessary since $\det\left(Y_{D_j}^{C_j}\right)=0$ unless $C_j\leq D_j$. Conversely, if $C_j\leq D_j$, then $\det\left(Y_{D_j}^{C_j}\right)\neq 0$. 
	
	For any $B$-module $N$, the \emph{character} of $N$ is defined by $\mathrm{char}(N)(x_1,\ldots,x_n)=\mathrm{tr}\left(X:N\to N\right)$ where $X$ is the diagonal matrix $\mathrm{diag}(x_1,x_2,\ldots,x_n)$ with diagonal entries $x_1,\ldots,x_n$, and $X$ is viewed as a linear map from $N$ to $N$ via the $B$-action. Define the \emph{dual character} of $N$ to be the character of the dual module $N^*$:
	\begin{align*}
		\mathrm{char}^*(N)(x_1,\ldots,x_n)&=\mathrm{tr}\left(X:N^*\to N^*\right) \\
		&=\mathrm{char}(N)(x_1^{-1},\ldots,x_n^{-1}).
	\end{align*}

	A special case of dual characters of flagged Weyl modules of diagrams are Schubert polynomials:
	
	\begin{theorem}[\cite{KP}]
		\label{thm:kp}
		Let $w$ be a permutation, $D(w)$ be the Rothe diagram of $w$, and $\mathcal{M}_{D(w)}$ be the associated flagged Weyl module. Then, 
		\[\mathfrak{S}_w = \mathrm{char}^*\mathcal{M}_{D(w)}. \]
	\end{theorem}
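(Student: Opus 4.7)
My plan is to verify that $\mathrm{char}^*\mathcal{M}_{D(w)}$ satisfies the two defining properties of the Schubert polynomial $\mathfrak{S}_w$: that it equals $x_1^{n-1}x_2^{n-2}\cdots x_{n-1}$ when $w=w_0$, and that it satisfies the divided difference recursion $\mathrm{char}^*\mathcal{M}_{D(w)} = \partial_i\, \mathrm{char}^*\mathcal{M}_{D(ws_i)}$ whenever $w_i<w_{i+1}$. Since the $\partial_i$ satisfy the braid relations, this uniquely characterizes $\mathfrak{S}_w$ and thus gives the theorem.

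For the base case, I would compute directly. The Rothe diagram of $w_0$ has columns $D_j=[n-j]$, and these are the minimal sets of their sizes with respect to $\leq$, so the only admissible $C\leq D(w_0)$ is $C=D(w_0)$ itself. Hence $\mathcal{M}_{D(w_0)}$ is one-dimensional, spanned by $\prod_{j=1}^{n}\det Y_{[n-j]}^{[n-j]} = \prod_{i=1}^{n}y_{ii}^{\,n-i}$. Each indeterminate $y_{ij}$ has weight $-e_i$ under the right $B$-action $f(Y)\cdot b=f(b^{-1}Y)$ restricted to the torus, so the generator has weight $-\sum_i (n-i)e_i$. The character is $\prod_i x_i^{-(n-i)}$ and the dual character is $\prod_i x_i^{\,n-i}=\mathfrak{S}_{w_0}$, as required.

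The inductive step is the main obstacle. When $w_i<w_{i+1}$, I need to identify a $B$-module relationship between $\mathcal{M}_{D(w)}$ and $\mathcal{M}_{D(ws_i)}$ that induces $\partial_i$ on dual characters. The natural route is representation-theoretic: let $P_i\supset B$ be the minimal parabolic subgroup generated by $B$ and $s_i$. I would show that $\mathcal{M}_{D(ws_i)}\cong \mathrm{ind}_{B}^{P_i}\mathcal{M}_{D(w)}$ as $B$-modules (equivalently, that $\mathcal{M}_{D(ws_i)}$ is obtained from $\mathcal{M}_{D(w)}$ by extending scalars along the inclusion $B\hookrightarrow P_i$ and restricting back), after which a standard computation with the $\mathfrak{sl}_2$-triple generated by $s_i$ shows that this operation sends the dual character to its image under $\partial_i$. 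Concretely, this requires a careful analysis of how $D(w)$ changes to $D(ws_i)$: the column $w_i$ and column $w_{i+1}$ of $D(w)$ get rearranged across rows $i$ and $i+1$, with a new box appearing at $(i,w_i)$ in $D(ws_i)$, and this rearrangement should be matched to the $P_i$-module structure via a Plücker-type identity on the determinants $\det Y_{D_j}^{C_j}$ for $j\in\{w_i,w_{i+1}\}$.

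An alternative proof avoiding the representation-theoretic induction argument is to invoke Magyar's theorem from \cite{magyar}, which establishes directly that for any northwest diagram $D$ (a class that includes all Rothe diagrams) the dual character $\mathrm{char}^*\mathcal{M}_D$ is computed by the orthodontia formula $\omega_1^{k_1}\cdots\omega_n^{k_n}\pi_{i_1}(\omega_{i_1}^{m_1}\pi_{i_2}(\omega_{i_2}^{m_2}\cdots\pi_{i_l}(\omega_{i_l}^{m_l})\cdots))$ built from the orthodontic sequence of $D$. Combined with Theorem~\ref{thm:magyaroperatortheorem}, which identifies this expression with $\mathfrak{S}_w$ when $D=D(w)$, this immediately yields the theorem. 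The main work in this route is already encapsulated in \cite{magyar}, so this is the cleanest path to write down given the machinery the paper has already set up.
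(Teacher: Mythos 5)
The paper does not prove this statement at all: it is quoted directly from Kra\'{s}kiewicz and Pragacz \cite{KP} and used as a black box, so there is no internal argument to compare yours against. Judged on its own merits, your first route has a genuine gap at the inductive step. The base case is correct and cleanly computed, but the heart of the matter is the claim that $\mathcal{M}_{D(ws_i)}\cong \mathrm{ind}_{B}^{P_i}\mathcal{M}_{D(w)}$ and that this operation induces $\partial_i$ on dual characters. Neither half survives scrutiny as stated. Induction from $B$ to a minimal parabolic is the module-theoretic incarnation of the Demazure operator $\pi_i=\partial_i x_i$, not of the divided difference $\partial_i$; these are different operators (indeed $\partial_i$ applied to a monomial can produce negative coefficients, so it cannot literally be the character of an induced module), and the defining recursion $\mathfrak{S}_w=\partial_i\mathfrak{S}_{ws_i}$ uses $\partial_i$. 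Moreover the asserted $B$-module isomorphism is essentially the entire content of the Kra\'{s}kiewicz--Pragacz theorem; deducing it from ``a Pl\"ucker-type identity on the determinants for $j\in\{w_i,w_{i+1}\}$'' is a placeholder for the hard work, not a proof of it. As written, the inductive step restates what must be proved.

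Your second route is a citation chain rather than a proof, and it is at serious risk of circularity: Proposition 15 of \cite{magyar} (Theorem~\ref{thm:magyaroperatortheorem} in this paper), which identifies the orthodontia expression with $\mathfrak{S}_w$, is itself obtained by combining Magyar's character formula for $\mathcal{M}_D$ over northwest diagrams with the very theorem of \cite{KP} you are trying to establish. So ``Magyar's character formula plus Theorem~\ref{thm:magyaroperatortheorem} gives $\mathfrak{S}_w=\mathrm{char}^*\mathcal{M}_{D(w)}$'' unwinds to assuming the conclusion. A self-contained proof would require either carrying out the parabolic-induction analysis in full (with the correct operator $\pi_i$ and a genuine construction of the intertwiner) or reproducing the original argument of \cite{KP}; failing that, the appropriate move is the one the paper makes, namely to cite \cite{KP}.
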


	Another special family  of dual characters of flagged Weyl modules of diagrams, for so-called skyline diagrams of compositions, are key polynomials \cite{keypolynomials}.
	
	\begin{definition}
		For a diagram $D\subseteq [n]\times [n]$, let $\chi_D=\chi_D(x_1,\ldots,x_n)$ be the dual character 
		\[\chi_D=\mathrm{char}^*\mathcal{M}_D. \] 
	\end{definition}

	We now work towards proving Theorem~\ref{thm:pattern2}. 
	We start by reviewing some material from \cite{FMS} for the reader's convenience.
	We then derive several lemmas that simplify the proof of Theorem~\ref{thm:pattern2}.

	\begin{theorem}[cf. {\cite[Theorem 7]{FMS}}]
		For any diagram $D\subseteq [n]\times [n]$, the monomials appearing in $\chi_D$ are exactly 
		\[\left\{\prod_{j=1}^{n}\prod_{i\in C_j}x_i\ \middle|\   C\leq D \right\}.\]
	\end{theorem}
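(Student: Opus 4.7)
The plan is to read the monomials of $\chi_D$ off the $T$-weight decomposition of $\mathcal{M}_D$, where $T\subset B$ is the maximal torus of diagonal matrices. The right action $f(Y)\cdot b = f(b^{-1}Y)$ sends each indeterminate $y_{ij}$ to $x_i^{-1}y_{ij}$, so for any $R,S\subseteq[n]$ with $|R|=|S|$ the minor $\det(Y_S^R)$ is a $T$-weight vector of weight $\prod_{i\in R}x_i^{-1}$. Consequently the spanning element $e_C := \prod_{j=1}^{n}\det\bigl(Y_{D_j}^{C_j}\bigr)$ associated to $C\leq D$ is a weight vector of weight $\mu_C := \prod_{j=1}^{n}\prod_{i\in C_j}x_i^{-1}$, and
\[
\chi_D = \mathrm{char}^*(\mathcal{M}_D) = \sum_\lambda \dim(\mathcal{M}_D)_\lambda\cdot \lambda^{-1}
\]
is a non-negative integer combination of the inverses of the $T$-weights occurring in $\mathcal{M}_D$.

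Since $\{e_C\colon C\leq D\}$ spans $\mathcal{M}_D$ by definition, every nonzero weight space $(\mathcal{M}_D)_\lambda$ satisfies $\lambda=\mu_C$ for some $C\leq D$. Hence every monomial of $\chi_D$ is of the form $\mu_C^{-1} = \prod_j\prod_{i\in C_j}x_i$, which gives one of the two required inclusions.

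For the reverse inclusion I would verify that $e_C$ is nonzero in $\mathbb{C}[Y]$ whenever $C\leq D$; this forces $(\mathcal{M}_D)_{\mu_C}\neq 0$, so the monomial $\prod_j\prod_{i\in C_j}x_i$ appears in $\chi_D$ with positive coefficient. Writing $C_j=\{c_1<\cdots<c_s\}$ and $D_j=\{d_1<\cdots<d_s\}$, the hypothesis $C_j\leq D_j$ gives $c_k\leq d_k$, so the identity-permutation term in the expansion $\det(Y_{D_j}^{C_j}) = \sum_\sigma \mathrm{sgn}(\sigma)\prod_k y_{c_k,d_{\sigma(k)}}$ is a genuine monomial in indeterminates; distinct permutations yield distinct monomials in the $y_{ij}$, so no cancellation can occur and each determinant factor is nonzero. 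Since $\mathbb{C}[Y]$ is an integral domain, the product $e_C$ is nonzero.

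The strategy is essentially bookkeeping once the weight structure is in place; the only step that demands a short argument is the non-vanishing of each factor $\det(Y_{D_j}^{C_j})$, which I expect to be the main technical point, and which is handled by exhibiting the surviving identity-permutation monomial.
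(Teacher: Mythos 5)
Your proposal is correct and takes essentially the same approach as the paper: both read the monomials of $\chi_D$ off the $X$-eigenvalues of the spanning elements $\prod_{j}\det\bigl(Y_{D_j}^{C_j}\bigr)$. The only difference is that you spell out the non-vanishing of $\det\bigl(Y_{D_j}^{C_j}\bigr)$ for $C_j\leq D_j$ via the surviving identity-permutation monomial, a fact the paper asserts without proof in the remark following the definition of $\mathcal{M}_D$.
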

	\begin{proof}
		(Following that of \cite[Theorem 7]{FMS}) Denote by $X$ the diagonal matrix $\mathrm{diag}(x_1,x_2,\ldots,x_n)$. First, note that $y_{ij}$ is an eigenvector of $X$ with eigenvalue $x_i^{-1}$. Take a diagram $C=(C_1,\ldots,C_n)$ with $C\leq D$. Then, the element $\prod_{j=1}^{n}\det\left(Y_{D_j}^{C_j}\right)$ is an eigenvector of $X$ with eigenvalue $\prod_{j=1}^{n}\prod_{i\in C_j}x_i^{-1}$. Since $\mathcal{M}_D$ is spanned by elements $\prod_{j=1}^{n}\det\left(Y_{D_j}^{C_j}\right)$ and each is an eigenvector of $X$, the monomials appearing in the dual character $\chi_D$ are exactly 
		$\left\{\prod_{j=1}^{n}\prod_{i\in C_j}x_i\ \middle|\   C\leq D \right\}$.
	\end{proof}

	\begin{corollary}
		\label{cor:fms}
		Let $D\subseteq [n]\times [n]$ be a diagram. Fix any diagram $C^{(1)}\leq D$ and set \[\bm{m}=\prod_{j=1}^{n}\prod_{i\in C^{(1)}_j}x_i.\] 
		Let $C^{(1)}, \ldots, C^{(r)}$ be all the diagrams $C$ such that $C\leq D$ and $\prod_{j=1}^{n}\prod_{i\in C_j}x_i=\bm{m}$. Then, the coefficient of $\bm{m}$ in $\chi_D$ is equal to 
		\[\dim \left(\mathrm{Span}_\mathbb{C}\left\{\prod_{j=1}^{n}\det\left(Y_{D_j}^{C^{(i)}_j}\right) \ \middle|\  i\in [r] \right\}\right).\] 
	\end{corollary}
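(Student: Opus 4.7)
The plan is to prove the corollary by combining the eigenvector observation from the preceding theorem with the standard weight space decomposition. The key point is that the coefficient of any monomial in the dual character is the dimension of the corresponding weight space, and that this weight space is spanned precisely by the determinantal generators that witness that monomial.

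First I would unpack the definition: $\chi_D(x_1,\ldots,x_n) = \mathrm{char}(\mathcal{M}_D)(x_1^{-1},\ldots,x_n^{-1})$, so writing the weight space decomposition $\mathcal{M}_D = \bigoplus_\lambda \mathcal{M}_D(\lambda)$ under the action of $X=\mathrm{diag}(x_1,\ldots,x_n)$, we have
\[
\chi_D(x_1,\ldots,x_n) \;=\; \sum_\lambda \dim\bigl(\mathcal{M}_D(\lambda)\bigr)\cdot \lambda(x_1^{-1},\ldots,x_n^{-1}).
\]
Hence the coefficient of the monomial $\bm{m}=\prod_{j=1}^n\prod_{i\in C^{(1)}_j}x_i$ in $\chi_D$ equals the dimension of the weight space $\mathcal{M}_D(\lambda_{\bm{m}})$, where $\lambda_{\bm{m}}$ is the character of the torus sending $X$ to $\prod_{j=1}^n\prod_{i\in C^{(1)}_j}x_i^{-1}$.

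Next, I would invoke the computation recalled in the proof of the preceding theorem: for every diagram $C\leq D$, the generator $\prod_{j=1}^n\det(Y_{D_j}^{C_j})$ is an eigenvector for $X$ with eigenvalue $\prod_{j=1}^n\prod_{i\in C_j}x_i^{-1}$. Therefore this generator lies in $\mathcal{M}_D(\lambda_{\bm{m}})$ if and only if $\prod_{j=1}^n\prod_{i\in C_j}x_i = \bm{m}$, i.e., if and only if $C\in\{C^{(1)},\ldots,C^{(r)}\}$.

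Finally, because $\mathcal{M}_D$ is \emph{spanned} by all the $\prod_{j=1}^n\det(Y_{D_j}^{C_j})$ with $C\leq D$, and these are $X$-eigenvectors, intersecting with a single weight space gives
\[
\mathcal{M}_D(\lambda_{\bm{m}}) \;=\; \mathrm{Span}_{\mathbb{C}}\!\left\{\prod_{j=1}^n \det\bigl(Y_{D_j}^{C^{(i)}_j}\bigr) \;\middle|\; i\in[r]\right\}.
\]
Taking dimensions then yields the claimed formula. The only step that requires any care is this last spanning assertion; I would justify it by the elementary fact that if a vector space is spanned by a set of torus eigenvectors, then each weight space is spanned by the subset of those eigenvectors of the matching weight (splitting any linear relation into its homogeneous weight components). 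No genuine obstacle is expected, as the corollary is essentially a direct bookkeeping consequence of the theorem it follows.
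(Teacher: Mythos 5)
Your proposal is correct and follows essentially the same route as the paper: identify the coefficient of $\bm{m}$ with the dimension of the $\bm{m}^{-1}$-eigenspace of $X$ on $\mathcal{M}_D$, then observe that this eigenspace is spanned by exactly those generators $\prod_j\det(Y_{D_j}^{C_j})$ whose eigenvalue matches. The paper states this more tersely, while you make explicit the (correct) justification that a space spanned by torus eigenvectors has each weight space spanned by the eigenvectors of that weight.
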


	\begin{proof} 
		The coefficient of $\bm{m}$ in $\chi_D$ equals the dimension of the eigenspace of $\bm{m}^{-1}$ in $\mathcal{M}_D$ ($\bm{m}^{-1}$ occurs here instead of $\bm{m}$ since $\chi_D$ is the dual character of $\mathcal{M}_D$). This eigenspace equals 
		\[\mathrm{Span}_{\mathbb{C}}\left\{\prod_{j=1}^{n}\det\left(Y_{D_j}^{C^{(i)}_j}\right) \ \middle|\  i\in [r] \right\},\] so the result follows.
	\end{proof}
	  
	The understanding of the coefficients of the monomials of $\chi_D$ given in Corollary~\ref{cor:fms}  is  key to our proof of Theorem~\ref{thm:pattern2}. We set up some notation now. 
 
	%\medskip
 	
	Given diagrams $C,D\subseteq [n]\times[n]$ and $k,l\in [n]$, let $\hc$ and  $\hd$ denote the diagrams obtained from $C$ and $D$ by removing any boxes in row $k$ or column $l$. 
	Fix a diagram $D$. For each diagram $\hc$, let 
	\[{\hc}_{\rm aug}=\hc \cup \{(k,i) \mid (k,i)\in D\} \cup \{(i,l) \mid (i,l) \in D \}\subseteq [n]\times [n].\]

	The following lemma is immediate and its proof is left to the reader.  
	\begin{lemma} 
		\label{lem:c}
		Let $C, D \subseteq [n]\times[n]$ be diagrams and $k,l\in [n]$. If $\hc\leq \hd$, then ${\hc}_{\rm aug}\leq D$. 
		In particular, every diagram $C'\leq \hd$ with no boxes in row $k$
		can be obtained from some diagram $C\leq D$ by removing any boxes in row $k$ or column $l$ from $C$.
	\end{lemma}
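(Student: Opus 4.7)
The first statement reduces immediately to a column-by-column check, so my plan is to verify, for each $j\in[n]$, that $(\hc_{\mathrm{aug}})_j\leq D_j$. For $j=l$, the definition gives $\hc_l=\emptyset$ and $(\hc_{\mathrm{aug}})_l=\{i:(i,l)\in D\}=D_l$, so the inequality is an equality. For $j\neq l$ we have $\hd_j = D_j\setminus\{k\}$ and $(\hc_{\mathrm{aug}})_j = \hc_j\cup(\{k\}\cap D_j)$, where $\hc_j$ and $\hd_j$ both avoid the element $k$ by construction. If $k\notin D_j$ then $\hd_j=D_j$ and $(\hc_{\mathrm{aug}})_j=\hc_j$, so $\hc\leq\hd$ at column $j$ is exactly what is needed. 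The only substantive case is $k\in D_j$, where the required inequality becomes $\hc_j\cup\{k\}\leq\hd_j\cup\{k\}$ given $\hc_j\leq\hd_j$ and $k\notin\hc_j\cup\hd_j$.

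This reduces the first part to a purely combinatorial fact about subsets of $[n]$: \emph{if $A,B\subseteq[n]$ satisfy $A\leq B$ and $k\notin A\cup B$, then $A\cup\{k\}\leq B\cup\{k\}$.} To prove it, write $A=\{a_1<\cdots<a_m\}$, $B=\{b_1<\cdots<b_m\}$ with $a_i\leq b_i$, and let $p$, $q$ be the insertion positions of $k$ into $A$, $B$ respectively. The key observation is that $b_i<k$ forces $a_i\leq b_i<k$, so the number of elements of $A$ strictly less than $k$ is at least the corresponding number for $B$; hence $p\geq q$. A short case analysis (for $i<q$, $i=p$, $q<i<p$ when $p>q$, and $i>p$) then confirms that the $i$-th element of $A\cup\{k\}$ does not exceed the $i$-th element of $B\cup\{k\}$, using $a_i\leq b_i$, $a_{p-1}<k<a_p$, and $b_{q-1}<k<b_q$. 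This is the only place any real work is needed; once this lemma is in hand, the column-by-column check completes the proof of the first assertion.

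For the ``in particular'' statement, observe that since $\hd_l=\emptyset$ and any $C'\leq\hd$ requires $\#C'_l=\#\hd_l=0$, the diagram $C'$ automatically has no boxes in column $l$ as well; combined with the hypothesis that $C'$ has no boxes in row $k$, this means $C'$ already equals its own ``hat.'' Set $C:=C'_{\mathrm{aug}}$; by construction removing the row $k$ and column $l$ boxes from $C$ recovers $C'$, and by the first part $C\leq D$. This exhibits the required $C$.

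The only nontrivial step is the set-theoretic lemma about adjoining a common element, and even that is essentially a bookkeeping argument about insertion positions; the rest of the proof is a routine unpacking of the definitions of $\hc$, $\hd$, and $\hc_{\mathrm{aug}}$.
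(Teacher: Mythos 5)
Your proof is correct and complete. The paper declares this lemma ``immediate'' and leaves the proof to the reader, and your argument --- the column-by-column reduction, the observation that column $l$ of $\hc_{\rm aug}$ equals $D_l$ exactly, and the insertion-position lemma showing $A\leq B$ and $k\notin A\cup B$ imply $A\cup\{k\}\leq B\cup\{k\}$ --- is precisely the intended verification, with the key inequality $p\geq q$ and the subsequent case analysis checking out.
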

 
	The following result is our key lemma.
	For a polynomial $f\in\mathbb{Z}[x_1,\ldots,x_n]$ and a monomial $\bm{m}$, let $[\bm{m}]f$ denote the coefficient of $\bm{m}$ in $f$.
	\begin{lemma} 
		\label{lem:keylemma}
		Fix a diagram $D$ and $k,l\in [n]$. Let $\{\widehat{C}^{(i)}\}_{i\in [m]}$ be a set of diagrams with $\widehat{C}^{(i)}\leq \widehat{D}$ for each $i$, and denote $\widehat{C}^{(i)}_{\mathrm{aug}}$ by $C^{(i)}$ for $i\in[m]$. If the polynomials $\displaystyle\left\{\prod_{j\in [n]}\det\left(Y_{D_j}^{{C^{(i)}_j}}\right)\right\}_{i \in [m]}$ are linearly dependent, then so are the polynomials $\displaystyle\left\{\prod_{j\in [n]\backslash \{l\}}\det\left(Y_{\hd_j}^{\widehat{C}^{(i)}_j}\right)\right\}_{i \in [m]}$. 
	\end{lemma}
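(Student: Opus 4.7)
The approach is a specialization (ring-quotient) argument. Given a nontrivial linear dependence $\sum_i \alpha_i \prod_{j \in [n]} \det(Y_{D_j}^{C^{(i)}_j}) = 0$ in $\mathbb{C}[Y]$, I would peel off the column $l$ and row $k$ contributions until only the smaller products remain, producing the desired linear dependence among them.

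Column $l$ is the easy step. Since $\hc^{(i)}_l = \emptyset$, the augmentation gives $C^{(i)}_l = D_l$ for every $i$, so the $j = l$ factor in each product is the $i$-independent polynomial $\det(Y_{D_l}^{D_l}) = \prod_{r \in D_l} y_{r,r}$, which is nonzero since $Y_{D_l}^{D_l}$ is upper triangular. Cancelling this common factor in the integral domain $\mathbb{C}[Y]$ reduces the relation to
\[\sum_i \alpha_i \prod_{j \in [n] \setminus \{l\}} \det(Y_{D_j}^{C^{(i)}_j}) = 0.\]

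For row $k$, I would pass to the quotient $R = \mathbb{C}[Y]/I$ where $I = (y_{k,c} : c > k)$; this $R$ is again a polynomial ring, hence an integral domain. In the image of $Y$ in $R$, row $k$ retains only the single diagonal entry $y_{k,k}$, since the subdiagonal row-$k$ entries were already zero by upper-triangularity and the strictly above-diagonal ones are killed by $I$. For each $j \in [n] \setminus \{l\}$ with $k \in D_j$ (so that $D_j = \hd_j \cup \{k\}$ and $C^{(i)}_j = \hc^{(i)}_j \cup \{k\}$ by Lemma~\ref{lem:c}), Laplace expansion of $\det(Y_{D_j}^{C^{(i)}_j})$ along row $k$ of the submatrix collapses in $R$ to a single surviving term, giving
\[\det(Y_{D_j}^{C^{(i)}_j}) \equiv \epsilon_{i,j} \, y_{k,k} \, \det(Y_{\hd_j}^{\hc^{(i)}_j}) \pmod{I},\]
for a sign $\epsilon_{i,j} \in \{\pm 1\}$ determined by the positions of $k$ in $C^{(i)}_j$ and $D_j$. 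For $j \neq l$ with $k \notin D_j$ the determinant already equals $\det(Y_{\hd_j}^{\hc^{(i)}_j})$, and no change is needed.

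Combining over $j$ and summing over $i$, the relation in $R$ becomes $y_{k,k}^{|J|} \sum_i \alpha_i \epsilon_i \prod_{j \in [n] \setminus \{l\}} \det(Y_{\hd_j}^{\hc^{(i)}_j}) = 0$, where $J = \{j \neq l : k \in D_j\}$ and $\epsilon_i = \prod_{j \in J} \epsilon_{i,j}$. Cancelling the nonzero $y_{k,k}^{|J|}$ in the domain $R$, and noting that the polynomials $\prod_{j \neq l} \det(Y_{\hd_j}^{\hc^{(i)}_j})$ involve none of the variables in $I$, the relation lifts back to $\mathbb{C}[Y]$ with coefficients $\beta_i := \alpha_i \epsilon_i$, which are not all zero since each $\epsilon_i$ is $\pm 1$, giving the desired dependence. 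The main technical point is choosing the right specialization: killing only the strictly above-diagonal row-$k$ variables is precisely what makes Laplace expansion along row $k$ cleanly extract the smaller determinant $\det(Y_{\hd_j}^{\hc^{(i)}_j})$. The $i$-dependence of the sign $\epsilon_{i,j}$, through the position of $k$ in $\hc^{(i)}_j$, rules out a literal common factorization but is absorbed harmlessly into the $\beta_i$.
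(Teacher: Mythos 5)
Your proof is correct and follows essentially the same route as the paper's: cancel the common factor $\det(Y_{D_l}^{D_l})$, then use Laplace expansion along row $k$ to replace each $\det(Y_{D_j}^{C^{(i)}_j})$ with $k\in D_j$ by $\pm y_{kk}\det(Y_{\hd_j}^{\hc^{(i)}_j})$, absorbing the $i$-dependent signs into the coefficients. The only (cosmetic) difference is that the paper extracts the coefficient of $y_{kk}^p$ from the relation, whereas you specialize the off-diagonal row-$k$ variables to zero and cancel $y_{kk}^p$ in the resulting integral domain; both implement the same idea.
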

	\begin{proof}
		We are given that 
		\begin{align}\label{eq:proof-1} \sum_{i\in [m]}{c_{i}}\prod_{j\in [n]}\det\left(Y_{D_j}^{C^{(i)}_j}\right)=0\end{align} for some constants $(c_{i})_{i \in [m]}\in \mathbb{C}^m$ not all zero. Since ${C^{(i)}}=\widehat{C}^{(i)}_{\rm aug}$ for $\widehat{C}^{(i)}\leq \hd$ we have that  ${C_l^{(i)}}=D_l$ for every $i \in [m]$. Thus, \eqref{eq:proof-1} can be rewritten as
		
		\begin{align*}\det\left(Y_{D_l}^{{D_l}}\right)\left(\sum_{i\in [m]}{c_{i}}\prod_{j\in [n]\backslash \{l\}}\det\left(Y_{D_j}^{C^{(i)}_j}\right)\right)=0.\end{align*}

		However, since $\det\left(Y_{D_l}^{{D_l}}\right)\neq 0$, we conclude that 
		
		\begin{align}\label{eq:proof-2} \sum_{i\in [m]}{c_{i}}\prod_{j\in [n]\backslash \{l\}}\det\left(Y_{D_j}^{C^{(i)}_j}\right)=0.\end{align}

		First consider the case that the only boxes of $D$ in row $k$ or column $l$ are those in $D_l$. If this is the case then 
		
		\begin{align*}
			\prod_{j\in [n]\backslash \{l\}}\det\left(Y_{\hd_j}^{{\widehat{C}^{(i)}}_j}\right)=\prod_{j\in [n]\backslash \{l\}}\det\left(Y_{D_j}^{C^{(i)}_j}\right)
		\end{align*} 
		for each $i \in [m]$. 
		Therefore,
		
		\begin{align}\label{eq:proof-3} \sum_{i\in [m]}{c_{i}}\prod_{j\in [n]\backslash \{l\}}\det\left(Y_{\hd_j}^{{\widehat{C}^{(i)}}_j}\right)=\sum_{i\in [m]}{c_{i}}\prod_{j\in [n]\backslash \{l\}}\det\left(Y_{D_j}^{C^{(i)}_j}\right).\end{align} 
		
		Combining \eqref{eq:proof-2} and \eqref{eq:proof-3} we obtain that  the polynomials $\left\{\prod_{j\in [n]\backslash \{l\}}\det\left(Y_{\hd_j}^{\widehat{C}^{(i)}_j}\right)\right\}_{i \in [m]}$ are linearly dependent, as desired.

		Now, suppose that there are boxes  of $D$ in row $k$ that are not in  $D_l$. Let $j_1< \ldots< j_p$ be all indices $j \neq l$ such that $D_j=\hd_j \cup \{k\}$. 
		Then also $C^{(i)}_{j_q}=\widehat{C}^{(i)}_{j_q} \cup \{k\}$ for each $i \in [m]$ and $q\in [p]$.
		View the left-hand side of \eqref{eq:proof-2} as a polynomial in $y_{kk}$. Then, \eqref{eq:proof-2} implies that the coefficient of $y_{kk}^p$ is $0$:
		
		\begin{align}\label{eq:proof-4} [y_{kk}^p]\sum_{i\in [m]}{c_{i}}\prod_{j\in [n]\backslash \{l\}}\det\left(Y_{D_j}^{C^{(i)}_j}\right)=0.\end{align} 
		
		On the other hand, the determinants $\det\left(Y_{D_j}^{C^{(i)}_j}\right)$ involve $y_{kk}$ exactly when $j=j_q$ for some $q\in[p]$. In this case, applying Laplace expansion along the row of $Y_{D_{j_q}}^{C^{(i)}_{j_q}}$ containing $y_{kk}$ implies
		
		\begin{align*}
			[y_{kk}]\det\left(Y_{D_{j_q}}^{C^{(i)}_{j_q}}\right) = \xi_{i,q}\det\left(Y_{\widehat{D}_{j_q}}^{\widehat{C}^{(i)}_{j_q}}\right)
		\end{align*}
		with $\xi_{i,q}\in\{1,-1 \}$. Thus,
		
%		\begin{align}\label{eq:ld???} [y_{kk}^p]\prod_{j\in [n]\backslash \{l\}}\det\left(Y_{D_j}^{C^{(i)}_j}\right)=\prod_{j\in [n]\backslash \{l\}}\det\left(Y_{\hd_j}^{\widehat{C}^{(i)}_j}\right),\end{align}  as is seen by Laplace expansion on the $k$th row, and therefore 
		
		\begin{align*}
			[y_{kk}^p]\sum_{i\in [m]}{c_{i}}\prod_{j\in [n]\backslash \{l\}}\det\left(Y_{D_j}^{C^{(i)}_j}\right)
			&=\sum_{i\in [m]}{c_{i}}\left([y_{kk}^p]\prod_{j\in [n]\backslash \{l\}}\det\left(Y_{D_j}^{C^{(i)}_j}\right)\right)\\
			&=\sum_{i\in [m]}{c_{i}}\left(\prod_{j\in [n]\backslash \{l,j_1,\ldots,j_p\}}\det\left(Y_{D_j}^{C^{(i)}_j}\right)\right)\left([y_{kk}^p]\prod_{q\in [p]}\det\left(Y_{D_{j_q}}^{C^{(i)}_{j_q}}\right)\right)\\
			&=\sum_{i\in [m]}{c_{i}}\left(\prod_{j\in [n]\backslash \{l,j_1,\ldots,j_p\}}\det\left(Y_{D_j}^{C^{(i)}_j}\right)\right)\left(\prod_{q\in [p]}[y_{kk}]\det\left(Y_{D_{j_q}}^{C^{(i)}_{j_q}}\right)\right)\\
			&=\sum_{i\in [m]}{c_{i}}\left(\prod_{j\in [n]\backslash \{l,j_1,\ldots,j_p\}}\det\left(Y_{D_j}^{C^{(i)}_j}\right)\right)\left(\prod_{q\in [p]}\xi_{i,q}\det\left(Y_{\widehat{D}_{j_q}}^{\widehat{C}^{(i)}_{j_q}}\right)\right).
%			&=\sum_{i\in [m]}{c_{i}}\prod_{j\in [n]\backslash \{l\}}\det\left(Y_{\hd_j}^{\widehat{C}^{(i)}_j}\right).
		\end{align*} 
		Since $\widehat{C}_j^{(i)}=C_j^{(i)}$ and $\widehat{D}_j=D_j$ whenever $j\neq l,j_1,\ldots,j_p$, we obtain
		\begin{align*}
			[y_{kk}^p]\sum_{i\in [m]}{c_{i}}\prod_{j\in [n]\backslash \{l\}}\det\left(Y_{D_j}^{C^{(i)}_j}\right)
			&=\sum_{i\in [m]}{c_{i}}\left(\prod_{j\in [n]\backslash \{l,j_1,\ldots,j_p\}}\det\left(Y_{\widehat{D}_j}^{\widehat{C}^{(i)}_j}\right)\right)\left(\prod_{q\in [p]}\xi_{i,q}\det\left(Y_{\widehat{D}_{j_q}}^{\widehat{C}^{(i)}_{j_q}}\right)\right)\\
			&=\sum_{i\in [m]}{c_{i}}\left(\prod_{q\in [p]}\xi_{i,q}\right)
			\left(\prod_{j\in [n]\backslash \{l\}}\det\left(Y_{\widehat{D}_j}^{\widehat{C}^{(i)}_j}\right)\right).\\
		\end{align*}
		
		Setting $c_i'=c_i\prod_{q\in [p]}\xi_{i,q}$ and applying (\ref{eq:proof-4}) yields the dependence relation
		\begin{align*}
			\sum_{i\in [m]}c_{i}'
			\prod_{j\in [n]\backslash \{l\}}\det\left(Y_{\widehat{D}_j}^{\widehat{C}^{(i)}_j}\right)=0.
		\end{align*}
		This implies the polynomials $\displaystyle\left\{\prod_{j\in [n]\backslash \{l\}}\det\left(Y_{\hd_j}^{\widehat{C}^{(i)}_j}\right)\right\}_{i \in [m]}$ are linearly dependent, concluding the proof.
		
%		Thus, \eqref{eq:proof-4} and \eqref{eq:ld+} imply that 
		
%		\begin{align}\label{eq:ld++}\sum_{i\in [m]}{c_{i}}\prod_{j\in [n]\backslash \{l\}}\det\left(Y_{\hd_j}^{\widehat{C}^{(i)}_j}\right)=0,\end{align}  as desired.	
	\end{proof}
	
	We now state and prove Theorem~\ref{thm:pattern} and its generalization Theorem~\ref{thm:pattern2}.
	\begin{theorem}
		\label{thm:pattern2}
		Fix a diagram $D\subseteq [n]\times [n]$ and let $\hd$ be the diagram obtained from $D$ by removing any boxes in row $k$ or column $l$. Then 
		\begin{align*}
		\chi_D(x_1, \ldots, x_n)=M(x_1, \ldots, x_n) \chi_{\hd}(x_{1}, \ldots,x_{k-1},0,x_{k+1},\ldots, x_{n})+F(x_1, \ldots, x_n),
		\end{align*}
		where $F(x_1,\ldots,x_n) \in \Z_{\geq 0}[x_1, \ldots, x_n]$ and  
		\[M(x_1, \ldots, x_n) = \left(\prod_{(k,i)\in D}{x_k}\right)\left(\prod_{(i,l)\in D}{x_i} \right).\]
	\end{theorem}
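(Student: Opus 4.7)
The plan is to prove the stronger statement that $\chi_D - M\,\chi_{\hd}(x_1,\ldots,x_{k-1},0,x_{k+1},\ldots,x_n)$ has nonnegative coefficients, by checking this coefficient-by-coefficient. For a fixed monomial $\bm m$, I can assume $\bm m = M\bm m'$ for some $\bm m'$ with no $x_k$ factor (otherwise the right-hand side contributes nothing at $\bm m$ and the inequality is automatic).

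By Corollary~\ref{cor:fms}, $[\bm m]\chi_D = \dim W_{\bm m}$, where
\[
W_{\bm m} = \mathrm{Span}_{\mathbb{C}}\Bigl\{\textstyle\prod_{j\in[n]}\det\bigl(Y_{D_j}^{C_j}\bigr) \;\Big|\; C\le D,\ \mathrm{wt}(C)=\bm m\Bigr\},
\]
and similarly $[\bm m']\chi_{\hd}(\ldots,0,\ldots) = \dim V_{\bm m'}$, where
\[
V_{\bm m'} = \mathrm{Span}_{\mathbb{C}}\Bigl\{\textstyle\prod_{j\in[n]\setminus\{l\}}\det\bigl(Y_{\hd_j}^{\widehat C_j}\bigr) \;\Big|\; \widehat C\le\hd,\ \mathrm{wt}(\widehat C)=\bm m',\ \widehat C \text{ has no row-}k\text{ box}\Bigr\};
\]
the no-row-$k$ condition is forced by the substitution $x_k=0$, and $\widehat C_l=\emptyset$ is automatic from $\hd_l=\emptyset$. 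So the whole theorem reduces to the inequality $\dim W_{\bm m}\ge\dim V_{\bm m'}$ for each $\bm m$.

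To produce this inequality, for each $\widehat C$ indexing the spanning set of $V_{\bm m'}$ I form $C:=\widehat C_{\mathrm{aug}}$. Lemma~\ref{lem:c} gives $C\le D$, and because $\widehat C$ has no box in row $k$ the augmentation multiplies the weight of $\widehat C$ by precisely $M$, so $\mathrm{wt}(C)=\bm m$ and the $D$-side product associated to $C$ sits inside $W_{\bm m}$. The contrapositive of Lemma~\ref{lem:keylemma} now says that whenever a finite collection of $\hd$-side products $\prod_{j\ne l}\det(Y_{\hd_j}^{\widehat C^{(i)}_j})$ is linearly independent, so is the corresponding collection of augmented $D$-side products $\prod_j\det(Y_{D_j}^{C^{(i)}_j})$. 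Taking $\widehat C^{(1)},\ldots,\widehat C^{(r)}$ whose $\hd$-side products form a basis of $V_{\bm m'}$ produces $r=\dim V_{\bm m'}$ linearly independent elements of $W_{\bm m}$, yielding $\dim W_{\bm m}\ge r$. Summing over all $\bm m$ then gives $F=\chi_D-M\,\chi_{\hd}(\ldots,0,\ldots)\in\Z_{\ge 0}[x_1,\ldots,x_n]$.

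The substantive obstacle has already been absorbed into Lemma~\ref{lem:keylemma} (whose proof is the genuinely technical ingredient, built on a Laplace expansion tracking the variable $y_{kk}$); after that, the only subtle point in the plan is the weight-bookkeeping that identifies $\mathrm{wt}(\widehat C_{\mathrm{aug}})$ with $M\cdot\mathrm{wt}(\widehat C)$, i.e.\ that the augmentation absorbs exactly the monomial factor $M$ coming from the row-$k$ and column-$l$ boxes of $D$. Once this identification is in place, the dimension comparison via Lemma~\ref{lem:keylemma} delivers the theorem.
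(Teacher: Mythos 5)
Your argument is correct and follows essentially the same route as the paper: reduce to the coefficient-wise inequality $[M\bm m]\chi_D\geq[\bm m']\chi_{\hd}(\ldots,0,\ldots)$, identify both coefficients as dimensions of spans of determinant products via Corollary~\ref{cor:fms}, match the indexing diagrams through augmentation and Lemma~\ref{lem:c}, and conclude with Lemma~\ref{lem:keylemma} (the paper states it in the dependence direction; you invoke the contrapositive, which is the same content). No gaps.
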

	\begin{proof}
		Let $M=M(x_1,\ldots,x_n)$. 
		We must show that $[M\bm{m}]\chi_D\geq [\bm{m}]\chi_{\hd}$  for each monomial $\bm{m}$ of $\chi_{\hd}$ not divisible by $x_k$.
		Let $C^{(1)}, \ldots, C^{(r)}$ be all the diagrams $C$ such that $C\leq D$ and $\prod_{j=1}^{n}\prod_{i\in C_j}x_i=M\bm{m}$. By Corollary~\ref{cor:fms}, 
		\[[M\bm{m}]\chi_D=\dim\left(\mathrm{Span}_{\mathbb{C}} \left\{\prod_{j=1}^{n}\det\left(Y_{D_j}^{C^{(i)}_j}\right) \ \middle|\  i\in [r] \right\}\right).\]
		Let $1,2,\ldots,q$ be the indices of the distinct diagrams among $\widehat{C}^{(1)}, \ldots, \widehat{C}^{(r)}$ such that $\widehat{C}^{(j)}\leq \widehat{D}$ for $j\in [q]$. 
		By Lemma~\ref{lem:c}, $\widehat{C}^{(1)}, \ldots, \widehat{C}^{(q)}$ are all the diagrams $C$ such that $C\leq \widehat{D}$ and $\prod_{j=1}^{n}\prod_{i\in C_j}x_i=\bm{m}$,
		as no diagram with this dual eigenvalue can have a box in row~$k$.
		So Corollary~\ref{cor:fms} implies that 
		\[[\bm{m}]\chi_{\widehat{D}}=\dim\left(\mathrm{Span}_{\mathbb{C}} \left\{\prod_{j=1}^{n}\det\left(Y_{\widehat{D}_j}^{\widehat{C}^{(i)}_j}\right) \ \middle|\  i\in [q] \right\}\right).\]
		
		Finally, Lemma~\ref{lem:keylemma} implies that 
		\[
			\dim\left(\mathrm{Span}_{\mathbb{C}} \left\{\prod_{j=1}^{n}\det\left(Y_{D_j}^{C^{(i)}_j}\right) \ \middle|\  i\in [r] \right\}\right)\geq 
			\dim\left(\mathrm{Span}_{\mathbb{C}} \left\{\prod_{j=1}^{n}\det\left(Y_{\widehat{D}_j}^{\widehat{C}^{(i)}_j}\right) \ \middle|\  i\in [q] \right\}\right),
		\]
		so $[M\bm{m}]\chi_D\geq [\bm{m}]\chi_{\hd}$ for each monomial $\bm{m}$ of $\chi_{\hd}$ not divisible by $x_k$; that is 
		\[\chi_D(x_1, \ldots, x_n)-M\chi_{\hd}(x_{1}, \ldots,x_{k-1},0,x_{k+1},\ldots, x_{n})\in \mathbb{Z}_{\geq 0}[x_1,\ldots,x_n].\]
	\end{proof}
	
	\begin{namedtheorem}[\ref{thm:pattern}]  
		Fix  $w \in S_n$ and let $\sigma \in S_{n-1}$ be the pattern with Rothe diagram $D(\sigma)$ obtained by removing row $k$ and column $w_k$ from $D(w)$. Then
		\begin{align*}
		\mathfrak{S}_{w}(x_1, \ldots, x_n)=M(x_1, \ldots, x_n) \mathfrak{S}_{\sigma}(x_{1}, \ldots, \widehat{x_k}, \ldots, x_{n})+F(x_1, \ldots, x_n),
		\end{align*}  
		where $F\in \Z_{\geq 0}[x_1, \ldots, x_n]$ and  
		\[M(x_1, \ldots, x_n) = \left(\prod_{(k,i)\in D(w)}{x_k}\right)\left(\prod_{(i,w_k)\in D(w)}{x_i} \right).\]		
	\end{namedtheorem}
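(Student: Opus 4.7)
The plan is to deduce Theorem~\ref{thm:pattern} as a direct specialization of Theorem~\ref{thm:pattern2} combined with Theorem~\ref{thm:kp}. Applying Theorem~\ref{thm:pattern2} with $D=D(w)$ and $l=w_k$ yields
\begin{align*}
\chi_{D(w)}(x_1,\ldots,x_n)=M(x_1,\ldots,x_n)\,\chi_{\hd}(x_1,\ldots,x_{k-1},0,x_{k+1},\ldots,x_n)+F,
\end{align*}
with $F\in\mathbb{Z}_{\geq 0}[x_1,\ldots,x_n]$ and $M$ equal to the claimed monomial; here $\hd$ denotes $D(w)$ with boxes in row $k$ and column $w_k$ deleted, still viewed inside $[n]\times[n]$. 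Theorem~\ref{thm:kp} identifies $\chi_{D(w)}=\mathfrak{S}_w$, so the statement will follow once I establish
\[\chi_{\hd}(x_1,\ldots,x_{k-1},0,x_{k+1},\ldots,x_n)=\mathfrak{S}_{\sigma}(x_{1}, \ldots, \widehat{x_k}, \ldots, x_{n}),\]
which by a second application of Theorem~\ref{thm:kp} is the same as identifying $\chi_{\hd}|_{x_k=0}$ with $\chi_{D(\sigma)}$ after the relabeling of variables coming from the order-preserving bijection $[n]\setminus\{k\}\to[n-1]$.

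I would prove this identification monomial-by-monomial using Corollary~\ref{cor:fms}. The diagram $\hd$ has empty row $k$ and empty column $w_k$, and by Lemma~\ref{lem:patterndiagram}, $D(\sigma)$ is obtained from $\hd$ by deleting these two empty lines and relabeling via the order-preserving bijections $\phi:[n]\setminus\{k\}\to[n-1]$ and $\psi:[n]\setminus\{w_k\}\to[n-1]$. The substitution $x_k=0$ kills exactly those contributions in $\chi_{\hd}$ coming from diagrams $C\leq\hd$ with a box in row~$k$. Each surviving diagram satisfies $C_j\subseteq[n]\setminus\{k\}$ for all $j$ and $C_{w_k}=\emptyset$ (since $\hd_{w_k}=\emptyset$), so $C'_{\psi(j)}:=\phi(C_j)$ defines a weight-preserving bijection from these diagrams onto diagrams $C'\leq D(\sigma)$.

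To conclude, I would verify that this bijection lifts to an identification of the spans of determinants appearing in Corollary~\ref{cor:fms}. Because $\hd$ has no boxes in row $k$, every entry of the submatrix $Y_{\hd_j}^{C_j}$ is some $y_{a,b}$ with $a\in C_j\subseteq[n]\setminus\{k\}$ and $b\in\hd_j\subseteq[n]\setminus\{k\}$. Relabeling indeterminates by $y_{a,b}\mapsto y'_{\phi(a),\phi(b)}$ takes the restriction of $Y$ to rows and columns $[n]\setminus\{k\}$ onto the $(n-1)\times(n-1)$ upper-triangular matrix used in $\mathcal{M}_{D(\sigma)}$, and sends $\det(Y_{\hd_j}^{C_j})$ to $\det({Y'}_{D(\sigma)_{\psi(j)}}^{C'_{\psi(j)}})$. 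Hence the dimensions of the two spanning sets agree and the coefficient-wise identity follows. The only point that requires genuine care is this last step: because $k$ and $w_k$ are generally different, the row and column indices are relabeled by distinct bijections $\phi$ and $\psi$, so one must confirm separately that the upper-triangular structure and all linear relations among the relevant determinants are preserved under the relabeling — which they are, but the bookkeeping must be executed carefully.
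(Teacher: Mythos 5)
Your proposal is correct and follows essentially the same route as the paper, which proves Theorem~\ref{thm:pattern} simply by specializing Theorem~\ref{thm:pattern2} to $D=D(w)$, $l=w_k$ and remarking that the disappearance of $x_k$ is ``due to reindexing.'' The only difference is that you carefully spell out that reindexing step (the weight-preserving bijection of diagrams $C\leq\hd$ avoiding row $k$ with diagrams $C'\leq D(\sigma)$, and the induced identification of determinant spans), which the paper leaves implicit.
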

	\begin{proof}
		Specialize Theorem~\ref{thm:pattern2} to the case that $D$ is a Rothe diagram $D(w)$ and $l=w_k$. The dropping of $x_k$ is due to reindexing, since the entirety of row $k$ and column $w_k$ of $D(w)$ are removed from to obtain $D(\sigma)$, not just the boxes in row $k$ and column $w_k$.
	\end{proof}
 
	\begin{corollary} 
		\label{cor:pattern}  
		Fix  $w \in S_n$ and let $\sigma\in S_m$ be any pattern contained in $w$. If $k$ is a coefficient of a monomial in $\mathfrak{S}_\sigma$, then $\mathfrak{S}_w$ contains a monomial with coefficient at least $k$. 
	\end{corollary}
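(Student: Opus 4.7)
The plan is to induct on $n - m$, with Theorem~\ref{thm:pattern} furnishing the inductive step. The base case $n = m$ is vacuous since then $\sigma = w$ and $\mathfrak{S}_\sigma = \mathfrak{S}_w$.

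For the inductive step, suppose the result holds for all pairs where the ambient permutation has length less than $n$. Given $w \in S_n$ containing the pattern $\sigma \in S_m$ via some realization $j_1 < j_2 < \cdots < j_m$, pick any index $k \in [n] \setminus \{j_1, \ldots, j_m\}$ and let $w' \in S_{n-1}$ be the permutation obtained by deleting $w_k$ from $w$ and standardizing. By Lemma~\ref{lem:patterndiagram} (applied to the realization $[n] \setminus \{k\}$ of $w'$ in $w$), $D(w')$ is obtained from $D(w)$ by removing row $k$ and column $w_k$, so Theorem~\ref{thm:pattern} applies and yields
\[
    \mathfrak{S}_w(x_1, \ldots, x_n) = M(x_1, \ldots, x_n)\,\mathfrak{S}_{w'}(x_1, \ldots, \widehat{x_k}, \ldots, x_n) + F(x_1, \ldots, x_n),
\]
with $F$ having nonnegative coefficients and $M$ a single monomial. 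Moreover, $w'$ still contains $\sigma$ as a pattern, realized by the images of $j_1, \ldots, j_m$ under the standardization.

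Now observe that because $M$ is a single monomial, multiplication by $M$ is a bijection on monomials that preserves coefficients. Hence for every monomial $\bm{m}$ appearing in $\mathfrak{S}_{w'}(x_1, \ldots, \widehat{x_k}, \ldots, x_n)$ with coefficient $c$, the monomial $M \cdot \bm{m}$ appears in $\mathfrak{S}_w$ with coefficient $c + [M \bm{m}] F \geq c$. In particular, every coefficient of $\mathfrak{S}_{w'}$ is matched or exceeded by some coefficient of $\mathfrak{S}_w$. Applying the inductive hypothesis to $w' \in S_{n-1}$ and $\sigma$, any coefficient $k$ in $\mathfrak{S}_\sigma$ appears as a coefficient of size at least $k$ in $\mathfrak{S}_{w'}$, and composing the two inequalities gives a coefficient of size at least $k$ in $\mathfrak{S}_w$.

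The only potential obstacle is the bookkeeping that $w'$ still contains $\sigma$ via the inherited realization and that the deletion in Theorem~\ref{thm:pattern} matches deletion at the level of Rothe diagrams; both follow directly from Lemma~\ref{lem:patterndiagram}. The crucial feature that makes the argument clean is that $M$ is a monomial rather than a polynomial, so no cancellations can occur between $M \cdot \mathfrak{S}_{w'}$ and $F$ that would deflate a coefficient.
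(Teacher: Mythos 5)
Your proof is correct and is precisely the argument the paper intends: the paper's proof is the one-line "immediate consequence of repeated applications of Theorem~\ref{thm:pattern}," and your induction on $n-m$, with the observation that $M$ is a single monomial and $F$ has nonnegative coefficients so no coefficient can deflate, is exactly the careful writeup of that repeated application.
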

	\begin{proof}
		Immediate consequence of repeated applications of Theorem~\ref{thm:pattern}.
	\end{proof}

\section*{Acknowledgments} 
We are grateful to Sara Billey and Allen Knutson for many discussions about Schubert polynomials.
We thank the Institute for Advanced Study for providing a hospitable environment for our collaboration. Many thanks to Arthur Tanjaya for his careful reading.

\bibliographystyle{plain}
\bibliography{zeroonebiblio}
	
\end{document}